\documentclass[a4paper]{amsart}
\usepackage[colorinlistoftodos,textsize=footnotesize]{todonotes}
\usepackage[english]{babel}
\usepackage[utf8]{inputenc}
\usepackage{amsmath, amssymb, amsthm}
\usepackage{graphicx}
\usepackage[]{tikz}
\usetikzlibrary{cd, patterns, angles, decorations.pathreplacing, calc, intersections}

\newtheorem{theorem}{Theorem}
\newtheorem{definition}{Definition}
\theoremstyle{plain}
\newtheorem{corollary}{Corollary}

\newtheorem{lemma}{Lemma}
\newtheorem{proposition}{Proposition}

\newtheorem{example}{Example}

\newtheorem{remark}{Remark}
\newtheorem{algorithm}{Algorithm}

\newcommand{\cC}{{\mathcal{C}}}

\newcommand{\cM}{{\mathcal{M}}}

\newcommand{\cS}{{\mathcal{S}}}
\newcommand{\cL}{{\mathcal{L}}}
\newcommand{\cR}{{\mathcal{R}}}

\newcommand{\C}{{\mathbb{C}}}
\newcommand{\Q}{{\mathbb{Q}}}
\newcommand{\Z}{{\mathbb{Z}}}
\newcommand{\R}{{\mathbb{R}}}
\newcommand{\bP}{{\mathbb{P}}}

\newcommand{\fm}{{\mathfrak m}}

\newcommand{\fp}{{\mathfrak p}}

\newcommand{\In}{\operatorname{In}}
\newcommand{\cotg}{\operatorname{cotan}}
\newcommand{\length}{\operatorname{length}}
\newcommand{\tg}{\operatorname{tan}}
\newcommand{\divides}{\bigm|}
\newcommand{\GWT}{\textsc{gwt}}
\newcommand{\WT}{\textsc{wt}}

\title[Combinatorics in resolution of surfaces]{Combinatorics and their evolution in resolution of embedded algebroid surfaces}

\author{Helena Cobo}
\address{Departamento de \'Algebra, Universidad de Sevilla}
\email{helenacobo@gmail.com}
\author{M. J. Soto}
\address{Departamento de \'Algebra, Universidad de Sevilla}
\email{soto@us.es}
\author{Jos\'e M. Tornero}
\address{Departamento de \'Algebra \& IMUS, Universidad de Sevilla}
\email{tornero@us.es}
\subjclass[2010]{14H20, 32S25}
\keywords{Resolution of surface singularities, Newton polygon, equimultiple locus, blowing-up.}

\date{\today}


\begin{document}

\begin{abstract}
The seminal concept of characteristic polygon of an embedded algebroid surface, first developed  by Hironaka, seems well suited for combinatorially (perhaps even effectively) tracking of a resolution process. However, the way this object evolves through the resolution of singularities is not really well understood, as some references had pointed out.

The aim of this paper is to study how this object changes as the surface gets resolved. In order to get a precise description of the phenomena involved, we need to use different techniques and ideas. Eventually, some effective results regarding the number of blow-ups needed to decrease the multiplicity are obtained as a side product.
\end{abstract}

\maketitle

\section{Introduction}

In this paper we will deal with embedded algebroid surfaces, that is, schemes given by the spectrum of a ring $R = K[[X,Y,Z]]/(F)$, where $K$ is an algebraically closed field of characteristic zero (usually $\C$, but any such field can fill~in) and $F$~is a power series of order~$n>1$. Such an~$F$ will be called an equation of the surface and $n$~will be called the multiplicity of the surface. We are then looking at the local version of the classical algebraic problem.

The resolution of surfaces has a long and fascinating history, as it has traditionally been the guinea pig for testing general resolution procedures. The fact that a surface can be resolved by blowing up was first proved by Walker~\cite{W} although arguably the most influential work in the matter came from O.~Zariski~\cite{Z1,Z2}. In particular, in~\cite{Z2}, he proved the so-called Levi-Zariski Theorem, as a part of his resolution of $3$-dimensional varieties. This result had previously been studied by B.~Levi~\cite{L2}, but a correct formal proof was missing. It essentially states that an embedded surface can be resolved by blowing~up smooth equimultiple centers of maximal dimension until a smooth model is reached. These results and techniques were paralleled by Abhyankar in positive characteristic~\cite{Abh}.

A combinatorial approach to the Levi-Zariski Theorem was pointed out by Hironaka in his seminal work~\cite{H1}, and partially developed in~\cite{Bowdoin}, although the arguments were not very clear in  positive characteristic. Many more years were needed until the works of Cossart and Piltant, \cite{CP1}~and~\cite{CP2}, provided a resolution of singularities of $3$-folds in all characteristics, generalizing \cite{Z2}. Their main argument is first proving local uniformization and then proving that Zariski's method holds in positive characteristic.

As Cossart argues in \cite{CO}, the methods of Jung (a pioneer in the study of surface resolution~\cite{J}), Zariski, Abhyankar and Hironaka are almost the same from the point of view of the characteristic polyhedron of the singularity. In very broad terms, the general strategy is to define a~tuple of non negative integers associated to the surface, the first one typically being the multiplicity, and then to prove that at each blow-up, the resulting tuple is smaller than the original, in the lexicographic ordering.

The relationship between singularities and combinatorics has therefore been both fruitful but also frustrating. 
Though combinatorics still prove to be useful for monitoring the evolution of singularities during the resolution process (see, for instance \cite{HW} for a modern application in positive characteristic), there are still many things we ignore about how to measure the eventual improvement of a singularity through the resolution process by using combinatorial tools.

In order to study the evolution of the combinatorics attached to a surface, a situation which seems to be optimal is that of quadrants. In \cite{HS} problem 6 explicitly asks whether the Newton polyhedron tends to the quadrant situation. This question is posed in the algebraic case, which is not exactly our set up, but it also makes sense in the algebroid situation. As we will show, this happens to be the case, but a relaxed condition, which we call generalized quadrant, is also an interesting intermediate state, which will be needed in order to properly describe the improvement of the combinatorial objects through the resolution process.

In this context, a natural question which might arise, and was in fact the original motivation for our work, is the following: can these combinatorics bound, in some effective sense, the resolution process? For instance, can we know in advance how many times should one expect to blow up the surface before the smooth model is obtained? Or, in more precise terms: is there a bound on the number of times that we need to blow~up the surface, say~following the Levi-Zariski algorithm? 

In the case of curves, it is easy to find such a bound, using the first characteristic exponent as hinted by Hironaka in \cite{H1}. However, the same kind of arguments were not available (due to the involved induction index chosen) for surfaces following neither Hironaka's nor other recent resolutions, such as the taut blow-up achieved by Hauser \cite{HH} or the classical-style developed by Kiyek and Vicente \cite{KV}. In fact, the lexicographic ordering is not well suited at all for these kind of questions, by definition.

As it turns out, the answer for surfaces is not straightforward. In fact, the main combinatorial tool used to date has been the characteristic polygon (Newton, Hironaka or Newton-Hironaka, depending on the source), which can be proved not to be precise enough for our purposes. For one thing, Piedra and the third author~\cite{LZ0} exhibited examples where surfaces with the same polyhedron have very different behavior under blow-ups. This implies that there is not enough information in the characteristic polyhedron (at least, as originally defined) to determine the number of times we need to blow up, and that we must look elsewhere for this information. 

We will give a result in this direction which solves a particular instance of the problem (that is, up to a precise preparation of the surface), and we will explain why it is not reasonable to expect a full answer.

The structure of the paper will be the following:
\begin{itemize}
\item In Section 2 we will review the concept of Newton polyhedron of a singularity, with specific attention to the surface case and the Hironaka polygon attached. We will also define a precise type of equations which will be the ones used in the sequel, called Generalized Weierstrass-Tchirnhausen ({\GWT}) equations.
\item In Section 3 we will review the blow-ups we are going to work with . We will break this study in two pieces: a certain (easy) type of blow-ups and a particular family of changes of variables called transvections.
\item In Section 4 we study the evolution of the polygon under blow-ups, and stress the importance of eliminating compact faces in the Hironaka polygon (that is, taking it into a quadrant) as a combinatorial avatar of the optimal situation, in resolution terms. We will also establish the essential properties of quadrants: its stability during the resolution process and how a generic surface tends in almost all cases to a situation that can be described using  quadrants.
\item In Section 5 we will introduce the fundamental concept of generalized quadrants (which are, essentially, quadrants up to some transvection), and we will prove how a generic surface can be taken, using blow-ups, to another one with {\em better} combinatorial characterization.
\item In Section 6 we will address the problem  of bounding the following resolution process  (essentially, a customized  Levi-Zariski strategy): For an algebroid surface,
\begin{enumerate}
\item if $(Z,X)$ or $(Z,Y)$ are permissible curves, we make a monoidal transformation centered in them,
\item otherwise, we perform a quadratic transformation.
\end{enumerate}
Since the arguments are fundamentally different, we cover first the case of $F$ having a tangent cone which is not a plane, and afterwards the case of a plane tangent cone.
\item Finally, in Section 7 we will comment on how the results from previous sections are optimal, together with displaying some examples.
\end{itemize}

Concerning possible extensions of our results, the positive characteristic case differs from zero characteristic as we will make clear here and there, but a result on this line may still be possible (\cite{HW} may contain hints on how to do that). For higher dimensions, alas,  we know for sure that the Levi-Zariski strategy fails in dimension~3, as proved by~Spivakovsky \cite{Sp2}, so an extension of our results might have an impact in the choice of blow-up centers which must necessarily be less coarse.

Note that all our arguments can be extended almost word for word to both the local analytic case (considering convergent power series instead of formal ones) and the cusp-like algebraic case\footnote{\emph{The most interesting and hardest case}, in the words of Hironaka~\cite{H1}.}. 

\section{Equations and Polyhedra}
\label{S2}

We introduce in this section all the notation and objects that we will use throughout the paper.

\subsection{The Newton Polyhedron and the Hironaka Polygon}

Let us consider the power series ring $K[[X,Y,Z]]$, $K$ an algebraically closed field, and let $\nu(\cdot)$ be the usual order function in this ring. Let $\cS$ be an embedded algebroid surface of multiplicity $n$, and $F(X,Y,Z)$ an equation of $\cS$. In other words, $\cS$ is given by
$$
\cS = \mbox{Spec} \bigl( K[[X,Y,Z]] \bigm/ (F) \bigr), \qquad \text{ with  $n = \nu(F)$.}
$$

\begin{definition}
Let $\overline{F}$ be the initial form of $F$, 
$$
\overline{F} = \sum_{i+j+k = n} a_{ijk} X^iY^jZ^k.
$$
The tangent cone of $\cS$, denoted by $\cC (\cS)$, or simply $\cC$ if no confusion arises, is the projective variety defined by $\overline{F}$ on $\bP^2 (K)$.
\end{definition}

Let now $(\alpha:\beta:\gamma) \in \bP^2(K)$ be a projective point not in $\cC$, that is, such that $\overline{F} (\alpha:\beta:\gamma) \neq 0$, and assume also $\gamma \neq 0$. Such a point always exists, as $K$ must be infinite. Then, the change of variables:
$$
X \longmapsto X + \alpha Z \quad Y \longmapsto Y + \beta Z \quad Z \longmapsto \gamma Z,
$$
takes $F$ into a new power series which is regular in $Z$ of order $n$, that is, it has a monomial $\lambda Z^n$ with $\lambda \in K^*$. Such a series is then associated by the Weierstrass Preparation Theorem with a polynomial (in $K[[X,Y]][Z]$), which is also an equation of $\cS$, as it defines the same ideal than the original $F$.

Therefore, up to a linear change of variables, one can assume $F$ to have the form
$$
F(X,Y,Z) = Z^n + \sum_{k=0}^{n-1} a_k(X,Y) Z^k, 
$$
where $ a_k(X,Y) = \sum_{i,j} a_{ijk} X^iY^j \in K[[X,Y]]$,
with $i+j+k \geq n$ whenever $a_{ijk} \neq 0$. Such an equation will be called a {\em Weierstrass equation} of $\cS$.

\begin{definition}
With the previous notations, let $\fp$ be a prime, non-maximal, ideal on $K[[X,Y,Z]]$ verifying:
\begin{enumerate}
\item[(a)] $F \in \fp^n$.
\item[(b)] There are two power series $G,H \in \fp$ such that
$\mbox{ord}(G) = \mbox{ord} (H) = 1$ and $\fp = (G,H)$.
\end{enumerate}
Such a prime ideal will be called a permissible curve of $\cS$.
\end{definition}

\begin{figure}[htbp]
    \includegraphics[totalheight=6cm]{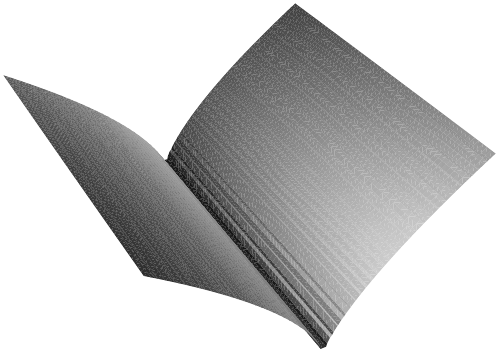}
    \caption{The surface $Z^{2}-X^{3}$ contains the permissible curve $(Z,X)$.}
\end{figure}

This notion of permissible curve agrees with the one derived from normal flatness in the work of Hironaka \cite{H2}. For our surface case, permissible is equivalent to smooth (condition (b)) and equimultiple (condition (a)). Hence, permissible curves are the first option to blow~up when following the Levi-Zariski strategy. These blow-ups are called \emph{monoidal transformations} in classical terminology. In the absence of permissible curves, we are bound to blow up the origin, performing \emph{quadratic transformations}.

Throughout this paper we will follow a resolution process which is very similar to the Levi-Zariski strategy:
\begin{algorithm}
\label{alg}Let $\cS$ be an algebroid surface, given by a Weiestrass equation
$F\in K[[X,Y]][Z]$,
\begin{enumerate}
\item if $(Z,X)$ or $(Z,Y)$ are permissible curves, we make a monoidal transformation centered in them,
\item otherwise, we perform a quadratic transformation.
\end{enumerate}
\end{algorithm}

Note that the algorithm will ignore other, more generic, equimultiple curves, which will be solved “along the way”. This results, unfortunately, in a less optimized algorithm.

\medskip

One of Hironaka's great insights in his work in singularities is that we can attach a combinatorial object to $\cS$. We will quickly review how this is done for surfaces, but see~\cite{Bowdoin, H2} for the details.

\begin{definition}
We will call
$$
N(F) = \left\{ (i,j,k) \in \Z_{\geq 0}^3 \; | \; a_{ijk} \neq 0 \right\} \cup \left\{ (0,0,n) \right\}
$$
the cloud of points of $F$.

The set 
$$
\Gamma (F) = \operatorname{CH} \left[ \bigcup_{(i,j,k) \in N(F)} \Big( (i,j,k) + \Z_{\geq 0}^3 \Big) \right]\subset \Z_{\geq 0}^3,
$$
where $\operatorname{CH}$ stands for  convex hull, is called the Newton polyhedron associated to $F$.

We will also use extensively the $k$-level of $\Gamma (F)$, that is, the Newton polyhedron associated to $a_k(X,Y)$
$$
\Gamma[k] (F) = \operatorname{CH} \left[ \bigcup_{(i,j) \; | \; (i,j,k) \in N(F)} \Big( (i,j) + \Z_{\geq 0}^2 \Big) \right]\subset \Z_{\geq 0}^2,
$$
as many of our results will look at the evolution of a particular coefficient $a_k(X,Y)$. 
\end{definition}

One should mind that, in general, $\Gamma[k] (F) \neq \Gamma (F) \cap \{Z = k\}$. 

\begin{figure}[hbtp]
    \begin{tikzpicture}[]

        \begin{scope}[x = {({sin(-60)*1cm},{-cos(60)*1cm})},
                        y = {(1cm,0cm)},
                        z = {(0cm,1cm)}]
            \tikzset{
                every point/.style = {circle, inner sep={4.75\pgflinewidth}, 
                    opacity=1, draw, solid, fill
                },
                point/.style={insert path={node[every point, #1]{}}}, point/.default={},
                point name/.style = {insert path={coordinate (#1)}},
                epsi/.style={black!40}
            }
            
            \def\crad{2\pgflinewidth}
            
            \draw[->] (0,0,0)--(0,0,3);
            \draw[->] (0,0,0)--(0,7,0);
            \draw[->] (0,0,0)--(3,0,0);
            
            \fill[epsi] (3,1,2) -- (2,1,2)--(1,3,2)--(1,7,2 )--(3,7,2) node[below right] {\normalcolor $\Gamma[k](F)$} --cycle; 
            \fill (1,3,2) circle (\crad);
            \fill (2,1,2) circle (\crad);
            \draw (3,0,2) -- (0,0,2) -- (0,7,2);
            \fill (2.5,4,2) circle (\crad);
            \fill (2,5,2) circle (\crad);
            
            \node[right] at (1,7,2) {$Z=k$};
            

         \end{scope}
    \end{tikzpicture}
    \caption{The $k$-level of $\Gamma(F)$.}
\end{figure}

Hironaka's actual version of the Newton polyhedron was a projection of $\Gamma(F)$:

\begin{definition}
The Hironaka polygon of $F$ is
\[
\Delta (F) = \operatorname{CH} \left( \bigcup_{a_{ijk}\neq 0} \biggl[  \biggl( \frac{i}{n-k}, \frac{j}{n-k} \biggr) + \Q_{\geq 0}^2 \biggr]  \right) \subset \Q_{\geq0}^2,
\]
\end{definition}

This object appeared for the first time in the famous Bowdoin Lectures~\cite{Bowdoin}. Note that $\Delta (F)$ can also be read in the following way: let $\rho$ denote the mapping
$$
\begin{array}{rcccl}
\rho: N(F) \setminus \bigl\{(0,0,n)\bigr\} & \longrightarrow & \{Z=0\} \subset \Q^3_{\geq0} & \stackrel{\sim}{\longrightarrow} & \Q_{\geq 0}^2 \\ \\
(i,j,k) & \longmapsto & \displaystyle \left( \frac{i}{n-k}, \frac{j}{n-k},0 \right) & \longmapsto &  \displaystyle \left( \frac{i}{n-k}, \frac{j}{n-k} \right) 
\end{array}
$$

That is, $\rho$ corresponds to a projection from $(0,0,n)$ to the plane $Z=0$, followed by a scaling centered in $(0,0)$ of ratio $1/n$, and
$$
\Delta (F) = \operatorname{CH} \left( \bigcup_{(i,j,k) \in N(F) \setminus \{ (0,0,n) \}} \left[ \rho (i,j,k) + \Q_{\geq 0}^2  \right] \right).
$$

\begin{figure}[hbtp]
    \begin{tikzpicture}[]

        \begin{scope}[x = {({sin(-60)*.85cm},{-cos(60)*.85cm})},
                        y = {(1cm,0cm)},
                        z = {(0cm,1cm)},
                        scale = 1]
            \tikzset{
                every point/.style = {circle, inner sep={1.75\pgflinewidth}, 
                    opacity=1, draw, solid, fill
                },
                point/.style={insert path={node[every point, #1]{}}}, point/.default={},
                point name/.style = {insert path={coordinate (#1)}},
                epsi/.style={black!40}
            }
            
            \def\crad{1.75\pgflinewidth}
            
            \draw[->] (0,0,0)--(0,0,6);
            \draw[->] (0,0,0)--(0,3,0);
            \draw[->] (0,0,0)--(5,0,0);

			\fill [black!20] (5,.5) -- (1,.5) -- (.6,.6)-- (.6,3) -- (5,3)--cycle;

            
            \draw (0,0,5) -- (3,3,0);
            \draw [dashed, ->](3,3,0)--(0.600000,0.600000);
            
            \draw (0,0,5) [point];
            \draw (3,3,0) [point];
			\draw (2,1,3) [point];			
			\draw (5,2.5,0) [point];
			\draw (1,0.5,0) [point];
			\draw[->] (0,0,5) -- (5,2.5,0);
			\draw[->, dashed] (5,2.5,0) -- (1,.5,0);
			
            \draw (.6,.6) [point];

            \node [left] at (0,0,5) {$(0,0,5)$};
            \node [below] at (5,2.5,0) {$(5,5/2,0)$};
			\node [left] at (2,1,3) {$(2,1,3)$};
            \node [right] at (3,3,0) {$(3,3,0)$};
            \node [right] at (3,3,3) {$\rho(3,3,0)=(3/5,3/5,0)$};
            \draw [->,epsi] (3,3,3) to [out=180, in=90] (.6,.6,.1);
			\node [left] (kk) at (1,-.5,1.5) {$\rho(2,1,3)=(1,1/2,0)$};
			\draw[->, epsi] (kk.east) to[out=0, in=90] (1,.5,0);

         \end{scope}
    \end{tikzpicture}
    \caption{Projection and scaling of $N(Z^{5}+X^{2}YZ^{3}+X^{3}Y^{3})$.}
    \label{fig:3}
\end{figure}

For the rest of the paper, we will identify without mention points $(a,b,0)$ with points $(a,b)$. Also, if $\rho(i,j,k)=(a,b)$, we will say that $(a,b)$ represents the point $(i,j,k)$ or, abusing notation, that it represents the monomial $X^{i}Y^{j}Z^{k}$ (see Figure~\ref{fig:3}).

The combinatorial objects defined above, $N(F)$, $ \Gamma(F)$ and especially the Hironaka polygon $\Delta(F)$, contain useful information about the surface $\cS$. For instance, 
\begin{itemize}
\item Both in $N(F)$ and in $\Gamma(F)$ the monomials of $\overline{F}$ are represented by the points appearing in the plane $x+y+z=n$.
\item In $\Delta(F)$ the monomials of $\overline{F}$ other than $Z^n$ are represented by points on the line $x+y=1$ (see Figure~\ref{fig:4}).
\end{itemize}
Moreover we can state by means of the polygon $\Delta(F)$ if the curves $(Z,X)$ and $(Z,Y)$ are permissible, as we will make explicit in Section \ref{subsec:monoidal} (this is not true for general permissible curves). For the purpose of Section \ref{Sec6} it will be crucial to observe that if we blow up a surface of multiplicity $n$, the multiplicity drops the moment that there exists a point $(a,b)\in\Delta(F^{(r)})$ with $a+b<1$. But, as pointed out in \cite{LZ0} the Hironaka polygon does not contain enough information to keep track of the resolution process (see also Example \ref{ejemplito}).

\begin{figure}[htbp]
    \begin{tikzpicture}[scale=2]
    \tikzset{
        every point/.style = {circle, inner sep={2\pgflinewidth}, 
            opacity=1, draw, solid, fill
        },
        point/.style={insert path={node[every point, #1]{}}}, point/.default={},
        point name/.style = {insert path={coordinate (#1)}},
    }
    \fill[black!20] (2.333333,0.000000)  -- (1.000000,0.000000)  -- (0.000000,1.000000)  -- (0.000000,2.000000)  -- (2.333333,2.000000)  -- cycle;
    \draw[->] (0.000000,0.000000)  -- (2.333333,0.000000)  ;
    \draw[->] (0.000000,0.000000)  -- (0.000000,2.000000)  ;
    \draw[very thin] (0,0) -- (0,-3pt) node[below] {$0$};
    \draw[very thin] (1,0) -- (1,-3pt) node[below] {$1$};
    \draw[very thin] (0,1) -- (-3pt,1) node[left] {$1$}; 
        \draw (0.000000,1.000000)  [point];
        \draw (1.000000,0.000000)  [point];
        \draw (1.333333,0.000000)  [point];
        \draw[shorten >=-20pt, shorten <=-20pt] (1,0) -- (0,1);
        
    \node[above right] at(0,1) {$\rho(0,3,0)$};
    \node[right] at ([yshift=.5cm] 1,0) {$\rho(2,0,1)$};
    \draw[->,, shorten >=2pt] (1,.5) to[out=180, in=90] (1,0);
    \node[above right] at(1.33,0) {$\rho(4,0,0)$};

    \end{tikzpicture}
    \caption{$\Delta(Z^3+X^2Z+Y^3-X^4)$. The monomials of the initial form of $F$ other than $Z^{3}$ lay over the line $x+y=1$.}
    \label{fig:4}
\end{figure}

Obviously, even if we have a Weierstrass equation, the equation, and therefore the combinatorial objects defined above, depends on the choice of coordinates. We will omit this dependence as it will usually be clear. 

The next step will be to find an optimal choice of coordinates, so to speak. For instance, if we allow $Z$ to vary by changes of variable of the type
$$
Z  \longmapsto Z + \alpha(X, Y), \mbox{ with } \alpha \in K[[X,Y]] \mbox{ not a unit},
$$
we obtain a collection of Hironaka polygons which have a minimal element in the sense of inclusion. This object was called by Hironaka \cite{H1} the {\em characteristic polygon} of the pair $\bigl(\cS, \{ X,Y\}\bigr)$ and was denoted by $\Delta \bigl(\cS,\{X,Y\}\bigr)$.

\begin{definition}
A vertex $(P_1,P_2)$ of $\Delta (F)$ is called contractible if there exists a change of variables $\varphi$
$$
Z  \longmapsto  Z + \alpha X^a Y^b, \qquad\mbox{ with } \alpha \in K,
$$
such that
$$
\Delta \bigl(\varphi(F)\bigr) \subset \Delta (F) \setminus \bigl\{(P_1,P_2) \bigr\}.
$$
In this case, $\varphi$ is called a contraction of the vertex $(P_1,P_2)$.
\end{definition}

After applying to $F$ the Tchirnhausen transformation
$$
Z \longmapsto  Z - \frac{1}{n} a_{n-1}(X,Y),
$$
the resulting equation has no contractible vertices in its Hironaka polygon. In fact, a vertex $(a,b)$ is contractible if and only if it represents {\em all} the monomials from $(Z + \alpha X^a Y^b)^n$ and this cannot happen since, after applying the Tchirnhausen transformation,~$a_{n-1}(X,Y)=0$. As it becomes obvious from the equations associated to the different blow-ups, the coefficient~$a_{n-1}(X,Y)$~will remain~null during the resolution process. In classical terms, $Z=0$ is a hyperplane with permanent maximal contact with the surface $\cS$. This is one of the places where characteristic zero plays a role, as these assertions are no longer true in positive characteristic.

Hironaka proved in \cite{H1} (for arbitrary characteristic) that the vertices of $\Delta (F)$ are not contractible if and only if $\Delta (F) = \Delta \bigl(\cS, \{X,Y\}\bigr)$. From the previous discussion this is obvious in characteristic zero.

\begin{definition}
A Weierstrass-Tchirnhausen ({\WT}) equation of $\cS$ is an equation of the form
$$
	F(X,Y,Z) = Z^n + \sum_{k=0}^{n-2} \Biggl(\sum_{i+j\geq n-k} a_{ijk}X^iY^j \Biggr) Z^k \in K[[X,Y]][Z].
$$
\end{definition}

Most of the results of the paper depend on the form of the tangent cone, more precisely we have to distinguish between plane and non-plane tangent cone. Notice that $\cC$ being a plane is equivalent to $\overline{F}=Z^n$, since $F$ is a {\WT} equation. 

\begin{definition}
For a {\WT} equation $F$, we will denote by 
$$
\begin{aligned}
\cR(F)&=\bigl(R_1(F),R_2(F)\bigr)&&\text{ the rightmost vertex of }\Delta(F),\\
\cL(F)&=\bigl(L_1(F),L_2(F)\bigr)&&\text{ the leftmost vertex of  }\Delta(F),
\end{aligned}
$$
which will turn out to be the most interesting points of the polygon. For a given level~$\Gamma[k]$ of the Newton polyhedron we will speak accordingly of $\cR[k]$ and/or $\cL[k]$.
\end{definition}

A major invariant for the control of the resolution process will be $L_2$ (and $L[k]_2$, at $\Gamma[k]$). This number has been used extensively in the literature since \cite{Bowdoin}, but it does not lead to effective bounding results unless some precautions are taken.

\begin{example}\label{nhpolygon}
    It is worth noting that 
Hironaka's polygon does not provide a  description of the equation, even for {\WT}~equations. For instance, it is possible for different points of $N (F)$ to be identified by means of $\rho$.  What is more, in general it is not possible to get rid of all of these identifications by changing  variables in $K[[X,Y]]$. An easy example is the equation
$$
F = Z^4 + (Y-X)^4Z^2 + (Y+3X)^8.
$$
\end{example}

We found this fact to be one of the main obstructions when  bounding the number of required blow-ups. Our idea has been to limit the choice of variables, fixing a special type of Weierstrass equation, in exchange for a greater control in the combinatorics.

Another possible way to tackle this problem is to change the combinatorial object, and in \cite{CST}, we introduced a modified version of the Newton-Hironaka polytope that better tracks the points of~$N(F)$. In this paper, however, we explore bounding the number of blow-ups by preparing the equations.

\subsection{GWT Equations}
\label{S3}

Fix a {\WT} equation
$$
F(X,Y,Z) = Z^n + \sum_{k=0}^{n-2} a_k(X,Y) Z^k \in K[[X,Y]][Z],
$$
and for each coefficient $a_k(X,Y)$ consider its initial form $\overline{a_k}(X,Y)$. Let us write $\nu_k = \nu(a_k) \geq n-k$. 
Paralleling the argument for Weierstrass equations, we can find a projective point $(1: \alpha) \in \bP^2(K)$ such that:
$$
\overline{a_k} (1, \alpha) \neq 0, \qquad\mbox{ for all } k=0,\dots,n-2.
$$
Therefore, the linear change of variables
$$
Y \longmapsto Y + \alpha X
$$
takes $F$ into another {\WT} equation, since the {\WT} condition only depends on the choice of $Z$,  where now the coefficients (renamed as $a_k$ again for simplicity) have the form
$$
\overline{a_k}(X,Y) = a_{\nu_k0k} X^{\nu_k} + \mbox{(other terms)}, \qquad\mbox{ with } a_{\nu_k0k} \neq 0.
$$
That is, all coefficients are now regular on $X$.

\begin{definition}
A Generalized Weierstrass-Tchirnhausen ({\GWT}) equation of $\cS$ is an equation of the form
$$
F(X,Y,Z) = Z^n + \sum_{k=0}^{n-2} a_k (X,Y) Z^k \in K[[X,Y]][Z],
$$
such that all the $a_k (X,Y)$ verify:
\begin{itemize}
\item $\nu (a_k) = \nu_k \geq n-k$.
\item $a_k (X,Y)$ is regular in $X$ with order $\nu_k$.
\end{itemize}
\end{definition}

As we have just seen, such an equation always exists, although there are, in principle, infinitely many {\GWT} equations for a given surface. Note that we have privileged a variable ($X$ in our case, although the choice of $Y$ is obviously analogous).

\begin{figure}[htbp]
    \begin{tikzpicture}[scale=2]
    \tikzset{
        every point/.style = {circle, inner sep={2\pgflinewidth}, 
            opacity=1, draw, solid, fill
        },
        point/.style={insert path={node[every point, #1]{}}}, point/.default={},
        point name/.style = {insert path={coordinate (#1)}},
    }
 
    \fill[black!20] (.2,2) -- (.2,1.5) -- 
        (.3, 1.) --
        (.5, .6) --
    (1,0) -- (2,0) --(2,2)--cycle ;
    \draw[->] (0.000000,0.000000)  -- (2,0.000000)  ;
    \draw[->] (0.000000,0.000000)  -- (0.000000,2.000000)  ;
    \draw (1,0) [point];
    \draw (.2,1.5) [point];
    \draw[shorten >=-20pt, shorten <=-20pt] (1,0) -- (0,1) node[below, left] {slope $-1 \;$};
    
    \node[above right] at (1,0) {$\mathcal{R}$};
    \end{tikzpicture}
    \caption{Typical shape of the polyhedron $\Delta(F)$, when $F$ is {\GWT}.}
	\label{fig:5}
\end{figure}

From a combinatorial point of view, a {\GWT} equation is characterized by the following fact: every $\Gamma[k](F)$ has a point in the $OX$-axis. Therefore $\cR[k]$ must lie in this axis. And in fact, if $\Gamma[k](F)$ is not a quadrant, then the slope of the compact edge which includes $\cR[k]$ must be $-1$ or smaller (see Figure~\ref{fig:5}). The same holds for the polygon $\Delta(F)$.

\section{Transforming the Equations: Blow-ups and Transvections}
\label{sec:3}

The rest of the paper is devoted to study to which extent $\Delta (F)$ or $\Gamma (F)$, for a {\GWT} equation $F$, allow us to control the resolution process and, as a consequence, to bound the number of blow-ups needed before the multiplicity drops. 

\subsection{Blow-ups}
\label{Bl}

We can easily rewrite the blow-ups of smooth equimultiple centers as transformations of the equation $F(X,Y,Z)$. In this case, we will write:

\begin{itemize}
\item $\pi^\fm_{(a:b:c)}$ for the quadratic transformation in the direction $(a:b:c)$ of the exceptional divisor.
\item $\varpi^\fm_{(a:b:c)}$ for the associated ring homomorphism, described by (say $a\neq 0$):
$$
X \longmapsto X, \quad Y \longmapsto X \biggl( Y + \frac{b}{a} \biggr), \quad Z \longmapsto X \biggl( Z + \frac{c}{a} \biggr).
$$
Then, $F^{(1)} = \varpi^\fm_{(a:b:c)}(F)/X^n$ is an equation of the strict transform of the blown up surface $\cS^{(1)} = \pi^\fm_{(a:b:c)} (\cS)$. We will call the above transformation the {\em equation of the blow-up}, a common abuse of notation.
\end{itemize}

If we blow up a permissible curve $\fp$, the corresponding situation may not be so clear, but it is nevertheless easy. 

Assume, for instance, $\fp = (Z,X)$ is an equimultiple curve (recall that our procedure only blows up curves of the form $(Z,X)$ and $(Z,Y)$). Then the situation parallels the quadratic transformation, as one can define the associated ring homomorphism $\varpi^\fp_{(a:b:c)}$ on the direction $(a:b:c)$ of the exceptional divisor by:
$$
X \longmapsto X, \quad Y \longmapsto Y, \quad Z \longmapsto X \biggl( Z + \frac{c}{a} \biggr),
$$
and then $F^{(1)} = \varpi_{(a:b:c)}^\fp(F)/X^n$ is an equation of $\cS^{(1)} = \pi_{(a:b:c)}^\fp (\cS)$.

Although it will not be used later, we can sketch the situation for a more general permissible center. If we have a permissible curve $\fp = (G_1,G_2)$, since $F \in \fp^n$ we can assume $G_1 = Z$, because $F$ is a {\GWT} equation (in fact, {\WT} is enough). In the same way, if
$$
\overline{G_2} = Y + \alpha X
$$
(respectively $\overline{G_2} = X + \alpha Y$), the Weierstrass Preparation Theorem allows us to write
$$
\fp = \bigl(Z,Y+X^sv(X)\bigr), \qquad \left( \mbox{resp. } \fp = \bigl(Z, X + Y^s v(Y)\bigr) \right)
$$
with $v(X) \in K[[X]]^*$ and $s\geq 1$. Hence, if $\fp = \bigl(Z,Y+X^sv(X)\bigr)$ then, because of equimultiplicity, it must hold
$$
\bigl(Y+X^sv(X)\bigr)^{n-k} \divides a_k (X,Y), \qquad k=0,\dots,n-2;
$$
and an equation for the strict transform of the monoidal transformation $\cS^{(1)}=\pi^{\fp}(\cS)$ of $\cS$ is
$$
F^{(1)} = Z^n + \sum_{k=0}^{n-2} \frac{a_k(X,Y)}{\bigl(Y+X^sv(X)\bigr)^{n-k}} Z^k.
$$

\subsection{Transvections} 

Our study of how blow-ups alter the combinatorial objects defined above will need us to understand how changes of variables affect them. One must be careful now with coordinate changes, even linear ones, as we may lose the {\GWT} condition. Nevertheless, if the changes only affect $\{X,Y\}$ we can readily assure that, at least, {\WT} equations will be transformed into {\WT} equations.

For a number of reasons that will become clear later, the only changes we will be interested in are of a special type.

\begin{definition}
    A transvection is a change of coordinates in $K[[X,Y,Z]]$ (or simply in $K[[X,Y]]$, as the case may be) given by
    $$
    X \longmapsto X, \quad
    Y  \longmapsto  Y + \sum_{i\geq 1} \alpha_i X^i, \quad Z\longmapsto Z, \qquad\mbox{ with } \alpha_i \in K.
    $$
\end{definition}

    Note that, in particular, the change of variables  needed to go from a {\WT} equation to a {\GWT} is a transvection.

As a transvection affects monomials in the following way:
$$
X^iY^jZ^k  \longmapsto  X^i \biggl( Y + \sum_{l\geq 1} \alpha_l X^l \biggr)^j Z^k, 
$$
a simple calculation tells us that a point $\rho (i,j,k)$~in $\Delta(F)$ is ``transformed'' into a possibly infinite set of points (see Figure~\ref{manypoints}). Indeed, for each $l \neq 0$, we have the points
$$
\left\{  \left( \frac{i}{n-k}, \frac{j}{n-k} \right), 
\left( \frac{i+l}{n-k}, \frac{j-1}{n-k} \right),\dots, 
\left( \frac{i+jl}{n-k}, 0 \right)  \right\}.
$$
Moreover,  these points lie in a segment starting on the point corresponding to $X^iY^jZ^k$ with slope $-1/l$, plus some other points on the $OX$-axis.

\begin{figure}
    \tikzset{
        every point/.style = {circle, inner sep={2\pgflinewidth}, 
            opacity=1, draw, solid, fill
        },
        point/.style={insert path={node[every point, #1]{}}}, point/.default={},
        point name/.style = {insert path={coordinate (#1)}},
    }
	\begin{tikzpicture}
		\draw[->] (0,0)--(0,2);
		\draw[->] (0,0)--(6,0);
		
		\begin{scope}[fill=gray,draw=gray]
			\draw (1.000000,1.500000) --(2.500000,0.000000) ;
			\draw (1.000000,1.500000) [point];
			\draw (1.500000,1.000000) [point];
			\draw (2.000000,0.500000) [point];
			\draw (2.500000,0.000000) [point];
			\draw (1.000000,1.500000) --(4.000000,0.000000) ;
			\draw (1.000000,1.500000) [point];
			\draw (2.000000,1.000000) [point];
			\draw (3.000000,0.500000) [point];
			\draw (4.000000,0.000000) [point];
			\draw (1.000000,1.500000) --(5.500000,0.000000) ;
			\draw (1.000000,1.500000) [point];
			\draw (2.500000,1.000000) [point];
			\draw (4.000000,0.500000) [point];
			\draw (5.500000,0.000000) [point];
		\end{scope}
		
		\draw (1.000000,1.500000) [point];
		\node[above right] at (1.000000,1.500000) {$\rho(i,3,k)$};
		\node at (5,1) {$\dots$};
	\end{tikzpicture}
	\caption{A point $\rho(i,j,k)$ ``transforms'' into a possible infinite set of points by a transvection. Here we draw the case $j=3$ and $\alpha_1,\alpha_2,\alpha_3\neq 0$.}\label{manypoints}
\end{figure}

\begin{remark}
    It is clear that if $G$~is the result of applying a transvection to an equation~$F$,  then $L_2$  is invariant in the sense that $\mathcal{L}\bigl(F\bigr)$ and $\mathcal{L}\bigl(G\bigr)$ have the second coordinate equal. Similarly for  $L[k]_2$  in~$\Gamma[k](F)$.
\label{remT}
\end{remark}

\subsection{Factoring the Blow-ups} 

Transvections and blow-ups share a very interesting relationship. 
We will see how  transvections and what we  call combinatorial transformations are the building bricks of the chain of blow-ups. 

\begin{definition}
A combinatorial transformation is either a monoidal transformation centered on $(Z,X)$ or $(Z,Y)$, should these centers be permissible, or a quadratic transformation on $(1:0:0)$ or $(0:1:0)$.
\end{definition}

It is straightforward to show that essentially, a quadratic transformation can be understood as the composition of a transvection (eventually the identity) and a combinatorial transformation.

\begin{lemma} The following diagram is commutative
    \[
    \begin{tikzcd}[row sep=huge, column sep=huge]
    K[[X,Y,Z]] 
        \arrow[r,"\varphi"]
        \arrow[d,"\varpi^{\mathfrak{m}}_{(1\colon \alpha\colon 0)}"']
    & K[[X,Y,Z]]\arrow[dl, "\varpi^{\mathfrak{m}}_{(1:0:0)}"]\\
    K[[X,Y,Z]]
\end{tikzcd}
\]
with $\varphi$ a transvection given by $Y \longmapsto Y + \alpha X$.
\end{lemma}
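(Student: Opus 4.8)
The plan is to reduce the claimed commutativity to a check on the generators $X, Y, Z$. Both composites in the diagram are continuous $K$-algebra homomorphisms of the complete local ring $K[[X,Y,Z]]$ into itself: the transvection $\varphi$ is a (formal, in fact linear) change of variables, and $\varpi^{\fm}_{(1:0:0)}$ is a continuous $K$-algebra homomorphism directly from its definition, so the composite $\varpi^{\fm}_{(1:0:0)} \circ \varphi$ is again such a map. Since any continuous local $K$-algebra endomorphism of $K[[X,Y,Z]]$ is determined by the images of $X, Y$ and $Z$, it suffices to verify that $\varpi^{\fm}_{(1:0:0)} \circ \varphi$ and $\varpi^{\fm}_{(1:\alpha:0)}$ send each generator to the same element.

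Next I would simply carry out this comparison, being careful that $\varpi^{\fm}_{(1:0:0)} \circ \varphi$ means \emph{apply $\varphi$ first, then $\varpi^{\fm}_{(1:0:0)}$}. Recording the maps on generators, $\varphi$ acts by $X \mapsto X$, $Y \mapsto Y + \alpha X$, $Z \mapsto Z$, while $\varpi^{\fm}_{(1:0:0)}$ acts by $X \mapsto X$, $Y \mapsto XY$, $Z \mapsto XZ$. The only generator whose image requires any computation is $Y$: we have $\varphi(Y) = Y + \alpha X$, and applying $\varpi^{\fm}_{(1:0:0)}$ yields $XY + \alpha X = X(Y + \alpha)$, using that $\varpi^{\fm}_{(1:0:0)}$ is a $K$-algebra homomorphism fixing $\alpha \in K$ and $X$. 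For $X$ and $Z$ the images are plainly $X$ and $XZ$.

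Finally I would compare with the direct map $\varpi^{\fm}_{(1:\alpha:0)}$, which by its definition (taking $a = 1$, $b = \alpha$, $c = 0$) sends $X \mapsto X$, $Y \mapsto X(Y + \alpha)$, $Z \mapsto XZ$. These coincide with the three images computed above, so the two continuous homomorphisms agree on generators and are therefore equal, which is precisely the asserted commutativity. There is no genuine obstacle in this argument; the only points that demand a modicum of care are keeping straight the order of composition and invoking continuity so that agreement on $X, Y, Z$ propagates to all of $K[[X,Y,Z]]$.
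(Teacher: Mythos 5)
Your proof is correct and matches the paper's intent: the paper omits the argument as ``straightforward'' (a direct diagram chase), and your verification on the generators $X$, $Y$, $Z$ --- giving $X \mapsto X$, $Y \mapsto X(Y+\alpha)$, $Z \mapsto XZ$ for both composites --- is exactly that intended check. The appeal to continuity to pass from agreement on generators to equality of the homomorphisms is a sound way to finish.
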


 It will also be interesting to consider the situation where several of these transformations are factored through transvections, and the result is rather similar.

\begin{lemma}
The following diagram is commutative,
\[
\begin{tikzcd}[cramped, row sep=huge, column sep=large ]
    K[[X,Y,Z]] \arrow[r, "\varpi^{\mathfrak{m}}_{(1:0:0)}"] &
    K[[X,Y,Z]] \arrow[r, ] &[-2.5em] 
    \cdots \arrow[r, ] &[-2.5em]     
    K[[X,Y,Z]] \arrow[r, "\varpi^{\mathfrak{m}}_{(1:0:0)}"] &
    K[[X,Y,Z]]  \\
    K[[X,Y,Z]] \arrow[r, "\varpi^{\mathfrak{m}}_{(1:\alpha_{1}:0)}"] \arrow[u, "\varphi_{1}"]
    &
    K[[X,Y,Z]] \arrow[r, ] \arrow[u, "\varphi_{2}"]&
    \cdots \arrow[r, ] &
    K[[X,Y,Z]] \arrow[r, "\varpi^{\mathfrak{m}}_{(1:\alpha_{m}:0)}"] \arrow[u, "\varphi_{m}"]&
    K[[X,Y,Z]]  \arrow[u, "\operatorname{id}"]
\end{tikzcd}
\]
\noindent where $\varphi_i$ is the transvection defined by
$$
\varphi_i  : Y \longmapsto  Y + \alpha_i X + \dots + \alpha_m X^{m-i+1}.
$$
\end{lemma}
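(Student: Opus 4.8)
The plan is to prove commutativity of the whole grid square by square, since a rectangular array of homomorphisms is commutative as soon as each of its elementary squares is. Concretely, adopting the convention $\varphi_{m+1}=\operatorname{id}$, I would show that for every $i=1,\dots,m$ one has the identity of ring endomorphisms
\[
\varpi^{\fm}_{(1:0:0)}\circ\varphi_i \;=\; \varphi_{i+1}\circ\varpi^{\fm}_{(1:\alpha_i:0)}
\]
of $K[[X,Y,Z]]$. For $i=m$ this is exactly the preceding lemma (with $\alpha=\alpha_m$), so the new content lies only in the passage through the higher-order tails of the transvections. All four maps are continuous $K$-algebra homomorphisms, hence determined by the images of $X$, $Y$ and $Z$; so it suffices to check the identity on these three generators.

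The generators $X$ and $Z$ are immediate: both composites fix $X$ and send $Z\mapsto XZ$, because every $\varphi_j$ fixes $X$ and $Z$ while both blow-up maps send $X\mapsto X$ and $Z\mapsto XZ$. The only genuine computation is the image of $Y$. Unwinding the definitions, the left-hand side gives
\[
\varpi^{\fm}_{(1:0:0)}\bigl(\varphi_i(Y)\bigr)
= XY+\sum_{t=0}^{m-i}\alpha_{i+t}\,X^{t+1},
\]
whereas the right-hand side, starting from $\varpi^{\fm}_{(1:\alpha_i:0)}(Y)=X(Y+\alpha_i)$ and then applying $\varphi_{i+1}$, yields
\[
X\,\varphi_{i+1}(Y)+\alpha_i X
= XY+\alpha_i X+\sum_{t=1}^{m-i}\alpha_{i+t}\,X^{t+1},
\]
and the two expressions coincide.

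The point to watch—and the only place where care is needed—is the index bookkeeping that makes these two sums agree. Blowing up at $(1:0:0)$ multiplies the entire transvection tail $\alpha_i X+\alpha_{i+1}X^2+\cdots$ by a further factor of $X$, raising every exponent by one; this is compensated exactly by passing from $\varphi_i$ to $\varphi_{i+1}$, which drops the lowest term and relabels the rest, together with the fact that centering the lower blow-up at $(1:\alpha_i:0)$ rather than $(1:0:0)$ reintroduces precisely the missing linear term $\alpha_i X$. Thus there is no real obstacle once the single-square lemma is available: the whole argument is the telescoping of these degree shifts, and the only thing to get right is matching the exponent ranges in the definitions of $\varphi_i$ and $\varphi_{i+1}$.
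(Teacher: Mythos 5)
Your proof is correct, and it takes the only natural route, which is also the one the paper implicitly intends: the paper states this lemma without proof (as ``rather similar'' to the preceding single-square lemma), and your argument---reduce to the elementary squares $\varpi^{\fm}_{(1:0:0)}\circ\varphi_i=\varphi_{i+1}\circ\varpi^{\fm}_{(1:\alpha_i:0)}$ (with $\varphi_{m+1}=\operatorname{id}$, so the last square is exactly that lemma) and check on the generators $X$, $Y$, $Z$, where the index shift from $\varphi_i$ to $\varphi_{i+1}$ absorbs the extra factor of $X$ introduced by the blow-up---is precisely the omitted verification, with the bookkeeping done correctly.
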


A lemma that will be useful later on is the following, which explains how quadratic transformations relate to each other via transvections.

\begin{lemma} \label{transvectionsquare}
Given a quadratic transformation on some direction $(1:\alpha:0)$ with its associated ring homomorphism $\varpi^\fm_{(1:\alpha:0)}$, and a transvection defined by
$$
\varphi: Y \longmapsto Y + v(X) = Y + \sum_{i=1}^\infty v_iX^i,
$$
there exists a transvection $\psi$ and a direction $(1:\beta:0)$ making the following diagram commutative:%

    \[
    \begin{tikzcd}[row sep=huge, column sep=huge]
    K[[X,Y,Z]] \arrow[r,"\varpi^{\mathfrak{m}}_{(1:\alpha:0)}"]
        \arrow[d,"\varphi"']
    & K[[X,Y,Z]]\arrow[black!60,d,"\psi"]\\
    K[[X,Y,Z]] \arrow[black!60, r, "\varpi^{\mathfrak{m}}_{(1:\beta:0)}"] &
    K[[X,Y,Z]]
\end{tikzcd}
\]
\end{lemma}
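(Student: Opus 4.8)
The plan is to check commutativity directly on the generators $X$, $Y$, $Z$, since all four arrows are substitution homomorphisms. Following the orientation fixed by the two preceding lemmas, the square commutes exactly when
\[
\psi \circ \varpi^{\fm}_{(1:\alpha:0)} = \varpi^{\fm}_{(1:\beta:0)} \circ \varphi
\]
as ring homomorphisms. So I would introduce $\psi$ as an \emph{unknown} transvection $Y \mapsto Y + w(X)$ with $w(X) = \sum_{i\geq 1} w_i X^i$, and determine both $w$ and $\beta$ by forcing the two composites to agree.

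First I would record each composite on generators. Along the bottom-left path, $\varphi$ sends $Y \mapsto Y + v(X)$ and then $\varpi^{\fm}_{(1:\beta:0)}$ sends $X\mapsto X$, $Y\mapsto X(Y+\beta)$, $Z\mapsto XZ$; since $\varpi^{\fm}_{(1:\beta:0)}$ fixes $X$ it also fixes $v(X)$, so the composite is
\[
X \longmapsto X, \qquad Y \longmapsto X(Y+\beta) + v(X), \qquad Z \longmapsto XZ.
\]
Along the top-right path, $\varpi^{\fm}_{(1:\alpha:0)}$ sends $Y \mapsto XY + \alpha X$, and applying $\psi$ afterwards yields
\[
X \longmapsto X, \qquad Y \longmapsto X\bigl(Y + w(X)\bigr) + \alpha X, \qquad Z \longmapsto XZ.
\]
The $X$- and $Z$-components already coincide, so commutativity reduces to the single identity $Xw(X) + \alpha X = X\beta + v(X)$ in $K[[X]]$, that is, $Xw(X) = (\beta-\alpha)X + v(X)$.

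The crucial observation is that a transvection must have $w(X)$ of order at least $1$, so $Xw(X)$ has order at least $2$ and contributes nothing in degree one. Writing $v(X) = \sum_{i\geq 1} v_i X^i$, matching the degree-one coefficients therefore forces $\beta - \alpha + v_1 = 0$, i.e.\ $\beta = \alpha - v_1$; this is the only admissible choice of center. Intuitively, it is precisely the linear part of $v$ that cannot be ``divided by $X$'' by the quadratic transformation and must instead be absorbed into the change of direction $\alpha \mapsto \beta$. With $\beta$ so fixed, the surviving terms give $Xw(X) = \sum_{i\geq 2} v_i X^i$, whence $w(X) = \sum_{i\geq 1} v_{i+1}X^i$, which indeed has order $\geq 1$ and so defines a genuine transvection $\psi$.

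Thus setting $\beta = \alpha - v_1$ and $\psi: Y \mapsto Y + \sum_{i\geq 1} v_{i+1}X^i$ makes the square commute, and these choices are forced. I do not expect any genuine obstacle here: the entire content is the bookkeeping point just isolated, namely that the blow-up divides by $X$, so a transvection upstairs descends to the transvection obtained by deleting its linear term and shifting the remaining coefficients down one degree, while the discarded coefficient $v_1$ reappears as the shift $\alpha \mapsto \alpha - v_1$ of the blow-up direction. The one thing to be careful about is not to treat the order-one part of $v$ on the same footing as its higher-order part, since only the latter survives as $\psi$.
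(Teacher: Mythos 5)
Your proof is correct and is essentially the paper's own argument: the paper dispatches the lemma as ``an easy exercise on diagram chasing'' and simply records $\beta = \alpha - v_1$ and $\psi : Y \mapsto Y + \sum_{i=2}^{\infty} v_i X^{i-1}$, which are exactly the formulas you derive (your $\psi: Y \mapsto Y + \sum_{i\geq 1} v_{i+1}X^{i}$ is the same map reindexed). You have merely made the generator-by-generator computation, and the observation that the linear coefficient $v_1$ must be absorbed into the change of direction, explicit.
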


\begin{proof}
It is an easy exercise on diagram chasing. Note that
$$
\beta = \alpha - v_1, \qquad \psi: Y \longmapsto Y + \sum_{i=2}^{\infty} v_i X^{i-1}.
\qedhere
$$
\end{proof}

As for monoidal transformations are concerned, if we have a permissible curve $\fp = \bigl(Z,X+Y^sv(Y)\bigr)$, every monoidal transformation can also be split into one
combinatorial transformation and a change of variables in $K[[X,Y]]$, \emph{not necessarily a transvection}. However, following Algorithm~\ref{alg}, we will not need to use these general changes of coordinates.

\subsection{Monoidal Combinatorial Transformations}
\label{subsec:monoidal}

Note that $(Z,X)$ is permissible if and only if $i+k \geq n$ for all $(i,j,k) \in N(F)$. This amounts to $\Delta (F) \subset \{x \geq  1\}$. The case of directions other than $(1:0:0)$ of the exceptional divisor only appears when the tangent cone is not plane and can be easily dealt with, as we will see in Section \ref{sec:I}. 

A monoidal transformation centered in $(Z,X)$ with direction $(1:0:0)$ acts on monomials in the following way:
$$
X^iY^jZ^k  \longmapsto  X^{i+k-n} Y^j Z^k, 
$$
and hence acts on $N(F)$ as follows:
\[
(i,j,k)  \longmapsto  (i+k-n,j,k).
\]
Therefore every point $(i,j,k)$ undergoes a translation of vector $(-n+k,0,0)$ which takes it closer to the $OZ$-axis. Similar statements hold for $\Gamma (F)$ and $\Gamma[k](F)$. For $\Delta(F)$, the transformation induces the following effect on the polygon:
$$
\left( \frac{i}{n-k}, \frac{j}{n-k} \right) \longmapsto
 \left( \frac{i+k-n}{n-k}, \frac{j}{n-k} \right) = \left( \frac{i}{n-k}-1, \frac{j}{n-k} \right).
$$
That is, every point in $\Delta (F)$ undergoes a translation of vector $(-1,0)$. 

In a similar way, a monoidal transformation centered in $(Z,Y)$ with direction $(0:1:0)$ induces a translation of vector $(0,-1)$ on the points of $\Delta (F)$. Note, however, that $(Z,Y)$ is never permissible for a {\GWT} equation.

\subsection{Quadratic Transformations}
\label{ssQT}
We will focus on the case $\overline{F}=Z^n$ since it is the case of study of Section \ref{secquad}. 
As we have already seen, any quadratic transformation, say on $(a:b:0)$, can be split into a transvection of $K[[X,Y]]$ (eventually the identity if $ab=0$) and a combinatorial transformation.

A quadratic transformation, say with direction  $(1:0:0)$, induces the following transformation on monomials:
$$
X^iY^jZ^k  \longmapsto  X^{i+j+k-n} Y^j Z^k, 
$$ 
so, in $\Delta(F)$ it amounts to
$$
\left( \frac{i}{n-k}, \frac{j}{n-k} \right) \longmapsto
 \left( \frac{i+j+k-n}{n-k}, \frac{j}{n-k} \right) = \left( \frac{i+j}{n-k}-1, \frac{j}{n-k} \right).
$$

Therefore each point moves on its horizontal line; and the greater its $y$-coordinate is, the more it moves rightwards. 

\begin{remark}
	Note that, in particular, if $\overline{F}=Z^n$ and $(Z,Y)$ is not permissible, at least we must have one point 
	$$
	\left( \frac{i}{n-k}, \frac{j}{n-k} \right) \in \Delta (F), \qquad \mbox{ with } \frac{j}{n-k} < 1,
	$$
	which will therefore move {\em to the left} (see Figure~\ref{fig:7}).
\end{remark}

\begin{figure}[htbp]
    \begin{tikzpicture}[scale=2]
    \tikzset{
        every point/.style = {circle, inner sep={2\pgflinewidth}, 
            opacity=1, draw, solid, fill
        },
        point/.style={insert path={node[every point, #1]{}}}, point/.default={},
        point name/.style = {insert path={coordinate (#1)}},
    }
 
     \fill[black!20] (.2,2)-- (.4,.4) -- (1,0) -- (2,0) -- (2,2) -- cycle;
 
    \draw[->] (0.000000,0.000000)  -- (2,0.000000)  ;
    \draw[->] (0.000000,0.000000)  -- (0.000000,2.000000)  ;
    
    \draw[very thick] (0,1) -- (2,1) node[right] {$y=1$};

    \draw (1,1.5) [point, point name=A];
    \draw (1.5,1.5) [point={outer sep=1cm}, point name = B]; \draw [->] (A) -- (B);
    
    \draw (1.25,.5) [point, point name = C];
    \draw (.75,.5) [point, point name = D];
    \draw [->] (C)--(D);
    
    \end{tikzpicture}
    \caption{Relative movements of quadratic combinatorial transformations.}
	\label{fig:7}
\end{figure}

Taking the direction $(1:\alpha:0)$  each point is ``transformed'' into an array of points, all of them into the same vertical line. More precisely, over monomials, the transformation goes as follows:
$$
X^iY^jZ^k  \longmapsto  X^i \left( Y + \alpha X \right)^j Z^k  \longmapsto  X^{i+j+k-n} \left( Y + \alpha \right)^j Z^k;
$$
while in $\Gamma[k](F)$ goes as:
$$
(i,j,k)  \longmapsto  (i+j+k-n, j-l,k), \qquad l=0,\dots,j;
$$
and therefore in $\Delta (F)$ (see Figure~\ref{fig:transform}):
$$
\left( \frac{i}{n-k},\frac{j}{n-k} \right)  \longmapsto  \left( \frac{i+j}{n-k}-1, \frac{j-l}{n-k} \right), \qquad l=0,\dots,j. 
$$

\begin{figure}[htbp]
    \begin{tikzpicture}[scale=1]
    \tikzset{
        every point/.style = {circle, inner sep={2\pgflinewidth}, 
            opacity=1, draw, solid, fill
        },
        point/.style={insert path={node[every point, #1]{}}}, point/.default={},
        point name/.style = {insert path={node[circle,inner sep={2\pgflinewidth}] (#1){}}},
        aux/.style = {black!20}
    }
 
    \draw[->] (0,0) -- (6,0);
    \draw[->] (0,0) -- (0,6);
    \draw[aux] (0,5) -- (5,0);
    \draw[aux] (4,0) -- (4,6);
    \draw (1,4) [point, point name=A];
    \foreach \s in {0,...,4}
        {
            \draw (4,\s) [point, point name={B\s}];
        }
    \begin{scope}[aux,->]
        \draw (A) to[bend right] (B0);
        \draw (A) to[bend right] (B1);
        \draw (A) to[bend left] (B2);
        \draw (A) to[bend left] (B3);
        \draw (A) to[bend left] (B4);
    \end{scope}
    \node [above] at (A) {$(a,b)$};
    
    \draw  [decorate,decoration={brace}, yshift=-1pt] (5,0) -- (4,0) node[midway, below] {$1$};
         
    \node[right] (L) at (5,1) {$x+y=a+b$};
    \draw[->, aux] (L.west) to[out=180, in=45] ($(4,1)!.5!(5,0)$) ;
         
    \end{tikzpicture}
    \caption{``Transform'' under $\varpi_{(1:\alpha:0)}^{\mathfrak{m}}$ of the point representing a monomial $X^{i}Y^{j}Z^{k}$ .}
	\label{fig:transform}
\end{figure}

\section{Evolution of the polygon under blow-ups: towards a quadrant}
\label{secquad}
In this section we will consider a {\WT} equation $F$ with plane tangent cone. 
We present some partial results studying the evolution of the polygon $\Delta(F)$ under blow-ups (see Section \ref{sec:I} for a treatment of the non-plane tangent cone case). We will also give an example  illustrating why it is not possible to solve this problem in full generality. 

Note that, once the multiplicity drops, the resulting equation will {\em not} be a Weierstrass equation, so all combinatorics will have to be computed again. Therefore it makes sense to focus on 
the process up to and until there is a decrease in the multiplicity.

Consequently, when we consider the polygon $\Delta(F)$ and try to describe $\Delta(F^{(1)})$ where $F^{(1)}$ is the transform after a blow-up, we are implicity assuming that there is no decrease in multiplicity.

Quadrants are a particularly comfortable type of polygon to work with. As we know they may be characterized as:
\begin{itemize}
\item Polygons with only one vertex (that is, $\cL = \cR$).
\item Polygons without compact faces of positive dimension.
\item Polygons for which there is a point which is minimal for both $x$-coordinate and $y$-coordinate.
\end{itemize}
For consistency's sake, we will consider the empty set as a quadrant.

Problem 6 of \cite{HS} is concerned with the relationship between {\em improving} the singularity (in some ample sense) and the fact that the combinatorics associated are {\em closer} to the quadrant case. This section  partially answers Problem 6 in the sense that we prove that for almost all directions of the exceptional divisor, the polygon $\Delta(F)$ tends to a quadrant.

\begin{figure}[htbp]
    \begin{tikzpicture}[scale=1]
    \tikzset{
        every point/.style = {circle, inner sep={2\pgflinewidth}, 
            opacity=1, draw, solid, fill
        },
        point/.style={insert path={node[every point, #1]{}}}, point/.default={},
        point name/.style = {insert path={node[circle,inner sep={2\pgflinewidth}] (#1){}}},
        aux/.style = {black!20}
    }
 
    \draw[->] (0,0) -- (4,0);
    \draw[->] (0,0) -- (0,4);
    \fill[aux] (1,3) rectangle (4,4);
    \draw [very thick] (1,4) -- (1,3) --(4,3);
    \end{tikzpicture}
    \qquad 
    \begin{tikzpicture}[scale=1]
    \tikzset{
        every point/.style = {circle, inner sep={2\pgflinewidth}, 
            opacity=1, draw, solid, fill
        },
        point/.style={insert path={node[every point, #1]{}}}, point/.default={},
        point name/.style = {insert path={node[circle,inner sep={2\pgflinewidth}] (#1){}}},
        aux/.style = {black!20}
    }
 
    \fill[aux] (1,0) rectangle (4,4);
    \draw [very thick] (1,4) -- (1,0) --(4,0);
    \draw[->] (0,0) -- (4,0);
    \draw[->] (0,0) -- (0,4);
    \end{tikzpicture}
    \caption{A quadrant (left) and a {\GWT} quadrant (right).}
	\label{fig:quadrant}
\end{figure}

We will distinguish a certain type of quadrants, tied to {\GWT} equations.
Let $F$ be {\GWT} equation such that  $\Delta(F)$ is  a quadrant. We call this situation a {\GWT} quadrant (see Figure~\ref{fig:quadrant}).  

\vspace{3mm}

Now we study the evolution of the polygon $\Delta(F)$. Recall first that monoidal transformations induce just translations of the polygon. 
Hence we only have to study the effect of quadratic transformations on the Hironaka polygon. This effect depends on the direction $(a:b:0)$ in the tangent cone. 

First we consider the directions $(1:0:0)$ and $(0:1:0)$, where the evolution of the polygon is easily understood.

\begin{lemma}
Let $\tau$ be a compact facet of the Hironaka polygon $\Delta(F)$ of a {\WT} equation, and let $\alpha$ be the angle that Span$(\tau)$ forms with the positive $x$-axis. The points in $\tau$ are transformed by $\varpi^\fm_{(1:0:0)}$ or $\varpi^\fm_{(0:1:0)}$ in points on a line, hence we can define the compact facet $\tau'$ as the image of $\tau$ by the quadratic transformation, and $\alpha'$ the corresponding angle.
 Then, 
\begin{enumerate}
\item[(i)] if $\tau$ has slope $\leq -1$, its image $\tau'$ by the transformation $\varpi^\fm_{(1:0:0)}$ is not a compact facet of $\Delta(F^{(1)})$. Otherwise, $\tau'$ is a compact facet of $\Delta(F^{(1)})$ with slope smaller than $\tau$. 
More precisely, if  $\alpha>\frac{3\pi}{4}$, then 
    \[\cotg(\alpha')=\cotg(\alpha)+1.\]

\item[(ii)] If $\tau$ has slope $\geq -1$, its image $\tau'$ by the transformation $\varpi^\fm_{(0:1:0)}$ is not a compact facet of $\Delta(F^{(1)})$. Otherwise, $\tau'$ is a compact facet of $\Delta(F^{(1)})$ with slope bigger than $\tau$. 
More precisely, if  $\alpha<\frac{3\pi}{4}$, then
    \[\tg(\alpha')=\tg(\alpha)+1.\]
\end{enumerate}
Therefore, the number of compact facets of the Hironaka polygon never increases by quadratic transformations. Moreover, if $\tau$ is a face and $\tau'$ is its image, then $\length(\tau)>\length(\tau')$.
\label{2direcciones}
\end{lemma}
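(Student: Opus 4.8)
The plan is to reduce the whole statement to the explicit affine description of the two transformations on the polygon recorded in Section~\ref{ssQT}, and then to read off slopes, angles, counts and lengths from that description. The first step is to observe that, \emph{independently of the level $k$} from which a monomial comes, the map induced by $\varpi^\fm_{(1:0:0)}$ on the coordinates of $\Delta(F)$ is the single affine shear
$$
T_1\colon (x,y)\longmapsto (x+y-1,\,y),
$$
while $\varpi^\fm_{(0:1:0)}$ induces $T_2\colon (x,y)\mapsto (x,\,x+y-1)$. This is exactly the computation $\rho(i,j,k)\mapsto\bigl(\tfrac{i+j}{n-k}-1,\tfrac{j}{n-k}\bigr)$ of Section~\ref{ssQT}, the key point being that the right-hand side depends only on $(x,y)=\rho(i,j,k)$ and not on $k$. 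Since $T_1,T_2$ are affine bijections, each sends the segment $\tau$ to a segment, which is what justifies defining $\tau'$ and the angle $\alpha'$ in the first place (``the points in $\tau$ are transformed into points on a line'').

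Next I would compute the effect on slopes. Writing $m=\tg(\alpha)<0$ for the slope of $\tau$, a one-line calculation from the endpoints gives $m'=\tfrac{m}{1+m}$ under $T_1$ and $m'=m+1$ under $T_2$. The rest of (i) and (ii) is a case check on the threshold $m=-1$. Under $T_1$: if $m\le-1$ then $m'\ge 0$ or $\tau'$ is vertical, so the upper endpoint becomes dominated and $\tau'$ is not a compact facet; if $-1<m<0$ then $m'<m<0$, so $\tau'$ is compact and steeper, and inverting $m'=\tfrac{m}{1+m}$ yields $\cotg(\alpha')=\cotg(\alpha)+1$, valid precisely for $\alpha>\tfrac{3\pi}{4}$. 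The case $T_2$ is symmetric: $m\ge-1$ destroys the facet, $m<-1$ keeps it with $m'=m+1>m$ and $\tg(\alpha')=\tg(\alpha)+1$, valid for $\alpha<\tfrac{3\pi}{4}$.

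For the count of compact facets I would track vertices. The compact facets of $\Delta(F)$ form a chain of consecutive vertices from $\cL$ to $\cR$, and by (i)/(ii) the facets crossing the threshold $-1$ collapse onto the vertical (resp.\ horizontal) boundary and disappear. The step I expect to be the main obstacle is ruling out the \emph{creation} of new compact facets, namely that a point dominated (interior) in $\Delta(F)$ might become a vertex after the shear; note also that $\Delta(F^{(1)})$ is not literally $T_1(\Delta(F))$, since the standard quadrant $\Q^2_{\geq0}$ is re-added rather than its sheared image. Both are settled by the same observation: the linear part of $T_1$ (resp.\ $T_2$) maps $\Q^2_{\geq0}$ into itself, so the componentwise dominance order among cloud points is preserved. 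Hence every vertex of $\Delta(F^{(1)})$ is the image of a vertex of $\Delta(F)$, no facet is created, and $T_1$ being injective it cannot merge two non-collinear facets either. The surviving facets therefore biject with a sub-chain of the original ones, and the number of compact facets never increases.

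Finally, the length claim is immediate from the endpoint formulas: under $T_1$ the $y$-spans of the endpoints are preserved while the $x$-span is scaled by $1+m\in(0,1)$, and under $T_2$ the roles are interchanged. In either case one coordinate-span is fixed and the other strictly shrinks, whence $\length(\tau)>\length(\tau')$.
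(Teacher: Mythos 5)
Your proposal is correct and follows essentially the same route as the paper: both reduce to the affine endpoint formulas $(x,y)\mapsto(x+y-1,y)$ and $(x,y)\mapsto(x,x+y-1)$ from Section~\ref{ssQT}, split on the slope threshold $-1$ (dominated upper endpoint versus surviving facet), and obtain the $\cotg(\alpha')=\cotg(\alpha)+1$, $\tg(\alpha')=\tg(\alpha)+1$ and length statements by the same elementary computation. The only difference is that you spell out why no new compact facets can be created (recession cone of $\Q^2_{\geq 0}$ mapped into itself, so vertices of $\Delta(F^{(1)})$ come from vertices of $\Delta(F)$), a point the paper's proof leaves implicit; this is a welcome clarification rather than a different argument.
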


\begin{figure}[htbp]
	\centering
	
	\begin{tikzpicture}
	    \tikzset{
	        every point/.style = {circle, inner sep={2\pgflinewidth}, 
	            opacity=1, draw, solid, fill
	        },
	        point/.style={insert path={node[every point, #1]{}}}, point/.default={},
	        point name/.style = {insert path={node[circle,inner sep={2\pgflinewidth}] (#1){}}},
	        aux/.style = {black!20}
	    }

        \coordinate (R) at (4,0);
        \coordinate (A0) at (1,3);
		\coordinate (A) at (1.25,2.25);
		\coordinate (B) at (2.25,1.25);
        \coordinate (B0) at (3,1); 
        \fill [aux] (4,4) -- (1,4) --(1,3) -- (A) -- (B) --(3,1) -- (4,1) -- cycle;
		\draw [<->, name path=l1] (0,4) -- (0,0) --  (5,0) ; 
		\draw (A)[point];
		\node [left] at (A) {$P$};
		\draw (B)[point];
		\node [below left] at (B) {$Q$};
        \draw[thick] (1,4) -- (1,3);
        \draw[thick] (4,1) -- (3,1);
		\draw[thick, dashed] (1,3) -- (A);
		\draw[thick, dashed] (B) -- (3,1);
        \draw[aux, name path=l2] (A) -- ($(A)!2.5!(B)$);
        \path[name intersections={of=l1 and l2}];
        \draw (intersection-1) [point];
        \pic [draw, ->, "$\alpha$", angle eccentricity=1.5] {angle = R--intersection-1--A};
        \node [above right] at ($(A)!.5!(B)$) {$\tau$};
		\draw [thick] (A)--(B);
        
	\end{tikzpicture}

	\caption{The angle $\alpha$ measures the evolution of polygons.}
	\label{fig:anguloalfa}
\end{figure}

\begin{proof}
We prove the statement for the transformation $\varpi^\fm_{(1:0:0)}$,   the other case being completely analogous. 

The first claim is straightforward, and we can consider $\tau'$ the image of a facet~$\tau$. What is not obvious is whether $\tau'$ is a facet of $\Delta(F^{(1)})$ or not. The result depends heavily on the slope of $\tau$. Indeed, let $\tau$ be a facet in $\Delta(F)$, and let $P$ (upper) and $Q$ (bottom) its extreme points. Then,
\[
\begin{aligned}
    P&=(P_1,P_2)&&\longmapsto &&&P'&=(P_1+P_2-1,P_2)    \\
    Q&=(Q_1,Q_2)&&\longmapsto &&&Q'&=(Q_1+Q_2-1,Q_2),
\end{aligned}
\]
and the slope of~$\tau$ is $-({P_2-Q_2})\bigm/({Q_1-P_1})$. Consequently, the slope is $\leq -1$ if and only if $P_1+P_2\geq Q_1+Q_2$. 

In this case, looking at the images $P'$ and $Q'$ we have that $Q_1'\leq P_1'$, which implies that the point $P'$, and hence the facet $\tau'$, will no longer be in the border of~$\Delta(F^{(1)})$.

Suppose now that the slope is $>-1$. Then, $P_1+P_2<Q_1+Q_2$. We have
\[
\begin{aligned}
    \cotg(\alpha)&=-\frac{Q_1-P_1}{P_2-Q_2}\\
    \cotg(\alpha')& =-\frac{Q_1'-P_1'}{P_2'-Q_2'}  =-\frac{Q_1+Q_2-1-P_1-P_2+1}{P_2-Q_2} =-\frac{Q_1-P_1}{P_2-Q_2}+1.\\
\end{aligned}
\]

Finally, we look at the length of faces with slope $>-1$ and its corresponding images
\[
\begin{aligned}
    \length(\tau)&=\sqrt{(Q_1-P_1)^2+(P_2-Q_2)^2}\\
    \length(\tau')&=\sqrt{(Q_1+Q_2-1-P_1-P_2+1)^2+(P_2-Q_2)^2},\\
\end{aligned}
\]
and clearly $\length(\tau')<\length(\tau)$.
\end{proof}

\begin{remark}
    Note that the Lemma above claims that if $\tau$ is a compact facet in $\Delta(F)$ of slope~$-1$, then its image $\tau'$ is not a compact facet of $\Delta(F^{(1)})$; but this does not mean that there is no compact facet of slope $-1$ in $\Delta(F^{(1)})$, since it can be the image of a compact facet in $\Delta(F)$ of slope $>-1$ (resp. $<-1$) under the quadratic transformation in the direction $(1:0:0)$ (resp. $(0:1:0)$). Indeed, this is the case of the surface defined by 
    \[F=Z^3-(X^3Y^2+XY^3+Y^4)Z+X^9Y^8,\]
where both $\Delta(F)$ and $\Delta(F^{(1)})$ have facets of slope $-1$.
\end{remark}

\begin{corollary}
After a quadratic transformation in the direction $(1:0:0)$ or $(0:1:0)$, we have that the distance $d(\mathcal L,\mathcal R)$ drops, or in other words
\[d(\mathcal L^{(1)},\mathcal R^{(1)})<d(\mathcal L,\mathcal R).\]
Moreover, if all one-dimensional compact faces of $\Delta(F)$ have slope $\leq -1$ (resp. $\geq -1$), then after the transformation $\varpi^\fm_{(1:0:0)}$ (resp. $\varpi^\fm_{(0:1:0)}$) we have that $\Delta(F^{(1)})$ is a quadrant, though not necessarily a {\GWT} one.
\label{distancia}
\end{corollary}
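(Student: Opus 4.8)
The plan is to bypass the facet-by-facet bookkeeping of Lemma~\ref{2direcciones} and instead track the two extreme vertices $\cL$ and $\cR$ directly. Recall from Section~\ref{ssQT} that $\varpi^\fm_{(1:0:0)}$ acts on $\Delta(F)$ as the affine shear $\sigma\colon (a,b)\mapsto(a+b-1,b)$, which fixes every $y$-coordinate and translates each horizontal line $y=b$ by $b-1$. I would treat the direction $(0:1:0)$ only at the very end, since it is obtained from this one by exchanging the roles of $X$ and $Y$ (equivalently, reflecting across the diagonal).

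First I would locate $\cR^{(1)}$. Because $\sigma$ preserves $y$, the minimal ordinate $R_2$ of $\Delta(F)$ is unchanged and the bottom horizontal ray emanating from $\cR$ is carried to a horizontal ray; hence the bottom-right corner is simply $\cR^{(1)}=(R_1+R_2-1,\,R_2)$. The delicate point---and the step I expect to be the main obstacle---is the leftmost vertex. Here $\sigma$ does \emph{not} send $\cL$ to $\cL^{(1)}$, because the vertical ray forming the left boundary of $\Delta(F)$ is sheared into a ray of slope $1$. Instead, the abscissa of $\Delta(F^{(1)})$ is minimized by minimizing $a+b$ over $\Delta(F)$; so if $V^\ast=(a^\ast,b^\ast)$ denotes the lowest vertex of $\Delta(F)$ at which the functional $x+y$ attains its minimum, then $\cL^{(1)}=(a^\ast+b^\ast-1,\,b^\ast)$. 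Identifying $\cL^{(1)}$ correctly---in particular its ordinate $b^\ast$, which comes from the bottom of the vertical segment into which a possible slope-$(-1)$ edge is folded---is what the whole estimate rests on.

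With these two vertices in hand the inequality becomes a short computation. Since $L_1$ and $R_2$ are the coordinatewise minima of $\Delta(F)$, every vertex satisfies $a^\ast\geq L_1$ and $b^\ast\geq R_2$, while $b^\ast\leq L_2$ because $\cL$ is the highest vertex. I would then read off the two Cartesian gaps of the segment $\cL^{(1)}\cR^{(1)}$, namely
\[
R_1^{(1)}-L_1^{(1)}=R_1+R_2-a^\ast-b^\ast\leq R_1-L_1,\qquad L_2^{(1)}-R_2^{(1)}=b^\ast-R_2\leq L_2-R_2,
\]
the first inequality using $a^\ast+b^\ast\geq L_1+R_2$ and the second using $b^\ast\leq L_2$. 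Thus neither the horizontal nor the vertical extent of $\cL\cR$ grows, so $d(\cL^{(1)},\cR^{(1)})\leq d(\cL,\cR)$. For strictness one checks that equality in both gaps forces $a^\ast=L_1$, $b^\ast=R_2$ and $b^\ast=L_2$ simultaneously, hence $L_2=R_2$; this is impossible once $\Delta(F)$ is not already a quadrant (the only case in which the statement is vacuous, as then $\cL=\cR$), so the decrease is strict.

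Finally, the \emph{moreover} clause falls out of the same description. If every compact face of $\Delta(F)$ has slope $\leq-1$, then $x+y$ is non-increasing as one descends the staircase from $\cL$ to $\cR$, so its minimum is attained at $V^\ast=\cR$; consequently $\cL^{(1)}=\sigma(\cR)=\cR^{(1)}$, and $\Delta(F^{(1)})$ has a single vertex, i.e.\ is a quadrant. It need not be a {\GWT} quadrant, since $R_2$ need not vanish for a merely {\WT} equation. The symmetric statement for $\varpi^\fm_{(0:1:0)}$ (slopes $\geq-1$, minimum of $x+y$ attained at $\cL$) then follows verbatim after swapping $X$ and $Y$.
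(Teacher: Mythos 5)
Your proposal is correct, but it takes a genuinely different route from the paper. The paper offers no separate proof: the corollary is meant to fall out of Lemma~\ref{2direcciones}, whose facet-by-facet analysis shows that under $\varpi^\fm_{(1:0:0)}$ every compact facet of slope $\leq -1$ disappears while every facet of slope $>-1$ survives with the same vertical extent and strictly smaller length, so the compact boundary (and with it $d(\cL,\cR)$) shrinks, and vanishes entirely when all slopes are $\leq -1$. You instead bypass the facets and track the two extreme vertices under the shear $\sigma(a,b)=(a+b-1,b)$: the identifications $\cR^{(1)}=\sigma(\cR)$ and $\cL^{(1)}=\sigma(V^\ast)$, with $V^\ast$ the lowest minimizer of $x+y$ on $\Delta(F)$, are exactly right (and legitimate here because the monomial substitution $X^iY^jZ^k\mapsto X^{i+j+k-n}Y^jZ^k$ is injective, so no cancellation can shrink the image cloud --- a point worth stating, since it is what makes both your argument and the paper's valid for these two directions but not for general $(1:\alpha:0)$). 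Your coordinate-gap computation then gives the distance drop, and the ``moreover'' clause comes for free from $V^\ast=\cR$ when all slopes are $\leq -1$. What each approach buys: the paper's lemma yields finer per-facet information (the $\cotg/\tg$ increments and length decrease) that is reused later in the paper, whereas your argument is more self-contained, produces the explicit coordinates of $\cL^{(1)}$ and $\cR^{(1)}$, and --- a small improvement in rigor --- isolates the degenerate case $\cL=\cR$, where the strict inequality as literally stated fails and the statement must be read as vacuous.
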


Now we look at quadratic transformations in the direction $(1:\alpha:0)$, and we have that for most of the directions it is easy to describe $\Delta(F^{(1)})$.

\begin{proposition} \label{almostall}
Let $F$ be a {\WT} equation with $\overline{F} = Z^n$. Assume $\overline{a_k}(1:\alpha) \neq 0$ for all $k=0,\dots,n-2$. Then the equation of the quadratic transformation in the direction $(1:\alpha:0)$ verifies that $\Delta(F^{(1)})$ is a {\GWT} quadrant.
\end{proposition}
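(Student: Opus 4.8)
The plan is to realize $\varpi^\fm_{(1:\alpha:0)}$ as the composition of the transvection $\varphi\colon Y\mapsto Y+\alpha X$ followed by the combinatorial quadratic transformation $\varpi^\fm_{(1:0:0)}$, exactly as in the factorization lemmas of Section~\ref{sec:3}, and to track the polygon through each factor. First I would apply $\varphi$ and set $G=\varphi(F)$. Since $\varphi$ is a transvection it preserves the {\WT} condition, and because $\overline{a_k}(1,\alpha)\neq 0$ for every $k$ the new coefficients $b_k(X,Y)=a_k(X,Y+\alpha X)$ are regular in $X$ of order $\nu_k$: this is precisely the construction turning a {\WT} equation into a {\GWT} one from Subsection~\ref{S3}. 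Moreover $\overline{G}=Z^n$ still holds, as $\varphi$ only touches $X,Y$ while $\overline{F}=Z^n$ involves neither. Thus $G$ is a {\GWT} equation with plane tangent cone, and in particular each $\Gamma[k](G)$ carries the axis point $(\nu_k,0)$, so that $\rho(\nu_k,0,k)=(\nu_k/(n-k),0)\in\Delta(G)$.

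Next I would apply $\varpi^\fm_{(1:0:0)}$ to $G$, using the description of this combinatorial transformation on $\Delta$ from Subsection~\ref{ssQT}: a point $(a,b)$ is sent to $(a+b-1,b)$. Writing each point of $\Delta(G)$ as $(i/(n-k),j/(n-k))$ with $i+j\geq\nu_k$ (the defining inequality of $\nu_k=\nu(a_k)$), its image has first coordinate
\[
\frac{i+j}{n-k}-1\geq\frac{\nu_k}{n-k}-1\geq m-1,\qquad m:=\min_{k}\frac{\nu_k}{n-k},
\]
and second coordinate $j/(n-k)\geq 0$. Hence every image point lies in $(m-1,0)+\Q_{\geq 0}^2$. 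On the other hand, if $k^\ast$ attains the minimum $m$, the axis point $(\nu_{k^\ast}/(n-k^\ast),0)=(m,0)$ is sent exactly to $(m-1,0)$. Therefore $\Delta(F^{(1)})=(m-1,0)+\Q_{\geq 0}^2$, a polygon with a single vertex lying on the $OX$-axis, i.e.\ a quadrant whose vertex sits on the axis.

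It remains to see that this quadrant is genuinely a {\GWT} quadrant, i.e.\ that $F^{(1)}$ is a {\GWT} equation. The factors preserve the {\WT} shape (the $Z^{n-1}$-coefficient stays null), so I only need regularity in $X$ of each transformed coefficient $a_k^{(1)}(X,Y)=b_k(X,XY)/X^{n-k}$; but the pure power $X^{\nu_k}$ in $b_k(X,0)=a_k(X,\alpha X)$ has coefficient $\overline{a_k}(1,\alpha)\neq 0$, which survives the division and exhibits $a_k^{(1)}$ as regular in $X$ of order $\nu_k-(n-k)=\nu(a_k^{(1)})$. This matches the axis points already found and confirms the {\GWT} quadrant. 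I expect the only genuinely delicate point to be the bookkeeping that makes the $OX$-axis vertex the unique minimal corner: the shear $(a,b)\mapsto(a+b-1,b)$ could in principle push a point with $b>0$ far to the left, but the uniform bound $i+j\geq\nu_k$ shows its image still has first coordinate $\geq m-1$ while keeping $b>0$, so it can never undercut the axis point. Under the standing assumption that the multiplicity does not drop one has $m\geq 2$, so the vertex indeed stays in the region $\{x\geq 1\}$ and the whole picture is consistent.
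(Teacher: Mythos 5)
Your argument is correct, and it reaches the conclusion by a genuinely different, more modular route than the paper. The paper proves the proposition in one shot: it substitutes $Y\mapsto X(Y+\alpha)$ directly, obtaining
\[
a_k^{(1)} = X^{k+\nu_k-n}\,\overline{a_k}(1,Y+\alpha)+\sum_{l>\nu_k}X^{l+k-n}a_{l,k}(1,Y+\alpha),
\]
and then observes that both the initial form of $a_k^{(1)}$ and its initial form with respect to $X$ equal $X^{k+\nu_k-n}\,\overline{a_k}(1,\alpha)$, which is exactly the statement that every $\Gamma[k]^{(1)}$ is a {\GWT} quadrant, hence so is $\Delta(F^{(1)})$. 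You instead factor $\varpi^{\fm}_{(1:\alpha:0)}=\varpi^{\fm}_{(1:0:0)}\circ\varphi$ with $\varphi\colon Y\mapsto Y+\alpha X$, recognize that the hypothesis $\overline{a_k}(1,\alpha)\neq 0$ is precisely the condition of Subsection~\ref{S3} making $\varphi(F)$ a {\GWT} equation, and then track the polygon under the shear $(a,b)\mapsto(a+b-1,b)$, using $i+j\geq\nu_k$ to trap all image points in $(m-1,0)+\Q^2_{\geq 0}$ and the surviving pure power $X^{\nu_{k^\ast}}$ to attain the vertex. Both proofs turn on the same mechanism --- the nonvanishing hypothesis creates a dominant pure $X$-power at every level, and the injectivity of the combinatorial transformation on monomials prevents cancellation --- but yours buys conceptual clarity: it exhibits the proposition as the composite of two facts the paper states separately, namely the {\WT}-to-{\GWT} preparation and the observation of Subsection~\ref{subsec:5.1} that $\varpi^{\fm}_{(1:0:0)}$ sends {\GWT} equations to {\GWT} quadrants. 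The paper's computation, in exchange, delivers the explicit equation of $F^{(1)}$ and the exact vertex of each $\Gamma[k]^{(1)}$. Your closing remark that the standing no-drop-in-multiplicity convention forces $m\geq 2$ is a point the paper leaves implicit, and it is worth having on record: $m\geq 2$ is what guarantees $\nu\bigl(a_k^{(1)}\bigr)\geq n-k$, i.e.\ that $F^{(1)}$ is a Weierstrass equation at all, so that its {\GWT} property even makes sense.
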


\begin{proof}
Let us write
$$
a_k(X,Y) = \overline{a_k} (X,Y) + \sum_{i+j=l>\nu_k} a_{ijk}X^iY^j = \overline{a_k} (X,Y) + \sum_{l>\nu_k} a_{l,k}(X,Y);
$$
that is, we are considering the order of $a_k$ to be $\nu_k$ (in particular, in our case $\nu_k > n-k$). After applying the quadratic transformation, we have the equation
$$
F^{(1)} (Z) = Z^n + \sum_{k=0}^{n-2} \biggl[ X^{k+\nu_k-n} \overline{a_k}(1,Y+\alpha) + \sum_{l>\nu_k} X^{l+k-n} a_{l,k}(1,Y+\alpha) \biggr]Z^k.
$$
Mind the following facts concerning $F^{(1)}$:
\begin{itemize}
\item If we call 
$$
a_k^{(1)} = X^{k+\nu_k-n} \overline{a_k}(1,Y+\alpha) + \sum_{l>\nu_k} X^{l+k-n} a_{l,k}(1,Y+\alpha),
$$
then we have 
$$
\overline{a_k^{(1)}} = X^{k+\nu_k-n} \overline{a_k} (1, \alpha),
$$
since all the remaining monomials have either greater degree in $X$ or the same degree, but with some non-zero exponent in $Y$ as well.
\item The initial form of $a_k^{(1)}$ with respect to $X$ is also
$$
X^{k+\nu_k-n} \overline{a_k} (1, \alpha).
$$
\end{itemize}
These two conditions are equivalent to saying that $\Gamma[k]^{(1)}$ is actually a \GWT{} quadrant, and  so is $\Delta(F)$. 
\end{proof}

\medskip

 As we have just seen,  for {\em almost all} possible directions~$(1:\alpha:0)$ the resulting equation has very nice combinatorial and resolution properties: its characteristic polygon is a \GWT{} quadrant and the multiplicity decreases by simply blowing-up $(Z,X)$  as many times as possible. This suggests that comparatively long resolution processes are an exception, not a rule. 
Obviously, the tricky part is the   {\em almost~all}. And it is because of these (finitely many but extremely annoying) cases that we need to dig deeper into the combinatorial toolbox. This will be done in next section for {\GWT} equations.

The next example illustrates the fact that, for the exceptional  directions $(1:\alpha:0)$, we cannot describe the effect of the corresponding quadratic transformation in terms of  the Hironaka polygon.

\begin{example}
Consider the family of surfaces defined by
\[F=Z^2+(X-Y)^3+X^r,\qquad  \text{with $r>3$.}\]
They all have the same Hironaka polygon $\Delta(F)$. The monomial $X^r$ is hidden in  $\Delta(F)$ but it influences the polygon $\Delta(F^{(1)})$. Indeeed,  let us look at the Hironaka polygon after the quadratic transformation in the direction $(1:1:0)$, then $\Delta(F^{(1)})$ has only one compact facet $\tau$ and
\begin{enumerate}
\item[(i)] if $r=4$, $5$, then the slope of $\tau$ is $<-1$,
\item[(ii)] if $r=6$, then the slope of $\tau$ is equal to $-1$,
\item[(iii)] if $r>6$, then the slope of $\tau$ is $>-1$.
\end{enumerate}
Hence,  we have three different possible behaviors for the same Hironaka polygon $\Delta(F)$.
\label{ejemplito}
\end{example}

We have seen how in most of the cases the polygon evolves to a quadrant, since either we get a quadrant or at least the distance $d(\mathcal L,\mathcal R)$ drops.


We are going to explore why quadrants are such nice polygons to work with in the resolution context. A first reason to deal with quadrants is that they are stable, in the following sense.

\begin{proposition}
Assume $\Delta (F)$ is a quadrant, and let $\cS^{(1)}$ be a quadratic transformation of $\cS$. If $F^{(1)}$ is a local {\WT} equation of $\cS^{(1)}$, then $\Delta \left( F^{(1)} \right)$ is also a quadrant.
\label{PSQ}
\end{proposition}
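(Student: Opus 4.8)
The plan is to reduce the statement to the purely combinatorial description of the quadratic transformation and then exploit the factorisation of blow-ups through transvections. Since we are in the plane tangent cone case $\overline{F}=Z^n$, the projectivised tangent cone is the line $\{Z=0\}\subset\bP^2$, so the only points of the exceptional divisor at which the strict transform can keep multiplicity $n$ are the directions $(a:b:0)$; these are the only quadratic transformations we need to examine. I also record that the transform $\varpi^{\fm}(F)/X^{n}$ of a \WT{} equation carries no $Z^{n-1}$ term (the sum runs only up to $k=n-2$, and both transvections and $\varpi^{\fm}_{(1:0:0)}$ preserve this) and is therefore again \WT{}; by the characterisation of the characteristic polygon as the minimal one, attained exactly when there are no contractible vertices (i.e. after Tchirnhausen), $\Delta(F^{(1)})$ does not depend on the chosen \WT{} representative, so I may compute with the canonical transform.

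Write $V=(v_1,v_2)$ for the unique vertex of the quadrant $\Delta(F)$, so every point of $\Delta(F)$ satisfies $x\ge v_1$, $y\ge v_2$ and $x+y\ge v_1+v_2$, with equality in the last only at $V$ (and $v_1+v_2>1$, since the tangent cone is plane). I first dispatch the combinatorial directions. For $(1:0:0)$ and $(0:1:0)$ the quadrant has no compact face of positive dimension, so the hypothesis of Corollary \ref{distancia} (``all compact faces have slope $\le -1$'', resp. $\ge -1$) holds vacuously, and the corollary gives immediately that $\Delta(F^{(1)})$ is a quadrant; one reads off its vertex $(v_1+v_2-1,v_2)$, resp. $(v_1,v_1+v_2-1)$, directly from the shear formula $(a,b)\mapsto(a+b-1,b)$, resp. $(a,b)\mapsto(a,a+b-1)$.

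The directions $(1:\alpha:0)$ with $\alpha\neq 0$ are the crux. Here I use the factorisation $\varpi^{\fm}_{(1:\alpha:0)}=\varpi^{\fm}_{(1:0:0)}\circ\varphi$, with $\varphi$ the transvection $Y\mapsto Y+\alpha X$, so that $F^{(1)}=(\varphi F)^{(1)}$ for the combinatorial direction $(1:0:0)$ and $\varphi F$ is again \WT{}. The key step is to control $\Delta(\varphi F)$: a transvection moves each point of $\Delta(F)$ along a segment of slope $-1$ down to the $OX$-axis without lowering $x+y$, whence $\Delta(\varphi F)\subset\{x+y\ge v_1+v_2\}\cap\{x\ge v_1\}$. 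Since $V$ still realises $x=v_1$ and lies on the supporting line $x+y=v_1+v_2$, it remains the leftmost vertex $\cL(\varphi F)$, and the segment from $V$ down to $(v_1+v_2,0)$ — supported by that line, with no point below it — is the unique compact face of $\Delta(\varphi F)$, of slope exactly $-1$. Thus all compact faces of $\Delta(\varphi F)$ have slope $\le -1$, and Corollary \ref{distancia} applied to $\varpi^{\fm}_{(1:0:0)}$ shows that $\Delta\bigl((\varphi F)^{(1)}\bigr)=\Delta(F^{(1)})$ is a quadrant, with vertex $(v_1+v_2-1,0)$.

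I expect the main obstacle to be this middle step: proving that after the transvection no monomial drops below the line $x+y=v_1+v_2$ and that $V$ stays leftmost, i.e. that the transvected polygon has a single compact face of slope $-1$ rather than a more intricate lower boundary. This rests entirely on the quadrant property that $V$ is the unique minimiser of $x+y$, together with the standing assumption that the multiplicity does not drop, which guarantees $v_1+v_2>1$ so that the candidate vertex stays in the first quadrant and the transform is a genuine \WT{} equation of multiplicity $n$. One should also confirm that the extreme monomial producing $(v_1+v_2-1,0)$ actually survives, with coefficient $\alpha^{j_0}a_{i_0 j_0 k_0}\neq 0$ where $V=\rho(i_0,j_0,k_0)$, since no other monomial at the same $Z$-level can project to $V$ and hence none can cancel it.
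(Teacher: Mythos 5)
Your proposal is correct and follows essentially the same route as the paper's own proof: reduce everything to combinatorial transformations via the transvection factorization of Section~\ref{sec:3}, then track the quadrant vertex under the explicit action on $\Delta(F)$ described in Section~\ref{ssQT}, arriving at exactly the paper's three vertex formulas $(v_1+v_2-1,v_2)$, $(v_1,v_1+v_2-1)$ and $(v_1+v_2-1,0)$. The only substantive difference is that you make explicit two points the paper's terse proof leaves implicit --- the precise shape of $\Delta\bigl(\varphi(F)\bigr)$ after the transvection, and the non-cancellation of the monomial producing $(v_1+v_2-1,0)$, which you correctly derive from the fact that $V$ is the unique point of the quadrant on the line $x+y=v_1+v_2$ --- so your write-up is, if anything, more complete than the original.
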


\begin{proof}
It is clear from the properties of {\WT} equations and the results in Section \ref{sec:3} that, under a combinatorial quadratic transformation, if there is a point which is minimal for $x$ and $y$ coordinates, it is transformed into a point with the same property. 
Moreover, we know how a quadratic transformation acts on $\Delta(F)$, as seen in Section~\ref{ssQT}.

Hence the transformation of $\cL = \cR$ will be again a point with minimal $x$ and $y$-coordinate, precisely
$$
\begin{aligned}
\mathcal L^{(1)}&=\bigl(L_1(F)+L_2(F)-1,L_2(F)\bigr), && \mbox{ in the direction }(1:0:0)\\
\mathcal L^{(1)}&=\bigl(L_1(F),L_1(F)+L_2(F)-1\bigr), && \mbox{ in the direction }(0:1:0)\\
\cL^{(1)} &= \bigl( L_1(F) + L_2(F) - 1, 0 \bigr) && \mbox{ in the direction }(1:\alpha:0).\qedhere
\end{aligned}
$$
\end{proof}

The argument also applies verbatim to the following case:

\begin{corollary}
Assume $\Gamma[k] (F)$ is a quadrant and let $\cS^{(1)}$ be a quadratic transformation of $\cS$. If $F^{(1)}$ is a local {\WT} equation of $\cS^{(1)}$, then $\Gamma[k] \left( F^{(1)} \right)$ is also a quadrant.
\end{corollary}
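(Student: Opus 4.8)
The plan is to observe that the statement is the exact level-by-level analogue of Proposition~\ref{PSQ}, so I would reduce it to that result rather than re-prove anything from scratch. The key structural fact, established in Section~\ref{ssQT}, is that a quadratic transformation acts on each $k$-level $\Gamma[k](F)$ by a rule that depends only on the data internal to that level: in the direction $(1:0:0)$ a point $(i,j)\in\Gamma[k](F)$ (representing $X^iY^jZ^k$) maps to $(i+j+k-n,j)$, in the direction $(0:1:0)$ to $(i,i+j+k-n)$, and in the direction $(1:\alpha:0)$ it spreads into the vertical array $(i+j+k-n,j-l)$ for $l=0,\dots,j$. Since $k$ is fixed throughout, none of these rules mixes different levels, so the combinatorial transformation restricts cleanly to each $\Gamma[k]$.

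First I would remark that a quadrant $\Gamma[k](F)$ is, by the characterization recalled before Figure~\ref{fig:quadrant}, precisely a translate $(c,d)+\Z_{\geq0}^2$ of the positive quadrant, i.e.\ it possesses a single point $(c,d)$ minimal for both coordinates. Then I would simply transport the argument of Proposition~\ref{PSQ} verbatim: under a combinatorial quadratic transformation the unique minimal point $(c,d)$ is carried to another point that is again minimal for both $x$- and $y$-coordinates, namely $(c+d-1,d)$ in the direction $(1:0:0)$, $(c,c+d-1)$ in the direction $(0:1:0)$, and $(c+d-1,0)$ in the direction $(1:\alpha:0)$ (where the vertical array of images collapses onto its lowest element, which is the minimal one). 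In each case the image set is the integer quadrant translated to this new vertex, hence again a quadrant. The general direction $(1:\alpha:0)$ is handled, as in Section~\ref{ssQT}, by factoring the quadratic transformation through a transvection followed by the combinatorial transformation $\varpi^\fm_{(1:0:0)}$; by Remark~\ref{remT} a transvection preserves $L[k]_2$ and, more to the point, carries a quadrant to a quadrant, after which the combinatorial step applies.

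The one point requiring genuine care — and the place where I would spend the argument — is the passage from $F$ to a \emph{local} {\WT} equation $F^{(1)}$ of $\cS^{(1)}$. The transform $\varpi^\fm_{(a:b:c)}(F)/X^n$ need not be {\WT} on the nose, and re-Tchirnhausenizing could in principle perturb the level $\Gamma[k]$; the hypothesis that $F^{(1)}$ is {\WT} is exactly what guarantees $Z=0$ retains maximal contact and that the coefficient $a_{n-1}$ stays null, so the level structure of $\Gamma[k]$ is the one predicted by the combinatorial rule above and no spurious points appear below the minimal vertex. I would therefore state explicitly that this is where the {\WT} assumption on $F^{(1)}$ is used, exactly as in Proposition~\ref{PSQ}, and conclude that the argument there applies \emph{verbatim} with $\Delta$ replaced by $\Gamma[k]$ throughout, which is precisely what the corollary asserts.
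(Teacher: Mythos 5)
Your proposal is correct and follows essentially the same route as the paper, whose entire proof of this corollary is the observation that the argument of Proposition~\ref{PSQ} applies verbatim with $\Delta$ replaced by $\Gamma[k]$ (the transformation acting level by level, carrying the unique minimal vertex to a point again minimal in both coordinates). Only a small notational slip: in the integer lattice of $\Gamma[k]$ the vertex $(c,d)$ goes to $(c+d+k-n,d)$, $(c,c+d+k-n)$, or $(c+d+k-n,0)$ rather than the scaled $\Delta$-coordinates $(c+d-1,d)$, etc., that you wrote, but this does not affect the argument.
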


As a side and easy remark, as combinatorial monoidal transformations amount to translations, it is clear that these blow-ups also take quadrants into quadrants. 
So, quadrants are stable by the type of blow-ups we are using. 
		In this case, then, we need only to apply monoidal transformations in $(Z,X)$ and $(Z,Y)$ to resolve~$F$, since monoidal transformations amount to translations of $\Delta(F)$ (see Section~\ref{subsec:monoidal}).

A second advantage, if we are in the quadrant situation, is that the number of blow-ups that the surface can undergo before dropping in multiplicity is easily bounded (see Proposition \ref{cuadrante}).

\bigskip

So, how hard is to get to the quadrant case? It turns out that by Proposition~\ref{almostall} it is in fact very difficult {\em not to get} there. 
However the next example shows that in general we do not always get to a quadrant.

\begin{example}
Consider the surface defined by the equation
$$F=Z^n-X^r\biggl(Y-\sum_{q\geq 1}\alpha_qX^q\biggr)^s$$
with $r+s>n$. It is clear that performing quadratic transformations in the directions $(1:\alpha_q:0)$ we will never get to a quadrant.

\end{example}

We will prove in next section that 
this is essentially the only {\em bad case}. In fact, it was one of the reasons to introduce the notion of generalized quadrants (see Defintion \ref{def11} below). Moreover, as we will see in forthcoming sections we will apply transvections to get a {\GWT} equation, and obviously quadrants are not stable under such transformations, while generalized quadrants obviously are.

\section{Generalized Quadrants and Where to Find Them}

For this section we will assume $F$ to be a {\GWT} equation with $\overline{F}=Z^n$. 
We will define a more general notion, the generalized quadrants.

As we pointed out at the beginning of Section \ref{secquad}, when we study the effect of a blow-up (or a chain of blow-ups) in the Hironaka polygon, we are assuming that the multiplicity does not drop.

\begin{definition}
	A polygon $\Delta(F)$ (resp.~$\Gamma[k](F)$), is called a {\em generalized quadrant} if there exists a transvection $\varphi$ such that $\Delta\bigl(\varphi(F)\bigr)$ (resp.~$\Gamma[k]\bigl(\varphi(F)\bigr)$) is a quadrant.
 \label{def11}
\end{definition}

This will be particularly useful when looking at the polygons $\Gamma[k]$ and their evolution during the resolution process.

\begin{remark}
		Note that generalized quadrants appear naturally in our resolution process: if $\Delta(F)$ is a quadrant, and $F$~is {\WT}, but not {\GWT}, the transvection that takes $F$~into a {\GWT} equation also takes  $\Delta(F)$~into a generalized quadrant. 
%
\label{RemGWT}
\end{remark}

As proved in the previous section for quadrants, we can show that generalized quadrants are stable.

\begin{lemma}
Assume $\Gamma[k] (F)$ is a generalized quadrant and let $\cS^{(1)}$ be a quadratic transform of $\cS$. If $F^{(1)}$ is a local {\WT} equation of $\cS^{(1)}$ then $\Gamma[k] \left( F^{(1)} \right)$ is also a generalized quadrant.
\end{lemma}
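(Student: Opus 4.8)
The plan is to reduce the statement for generalized quadrants to the already-established stability of genuine quadrants, namely Proposition~\ref{PSQ} and its corollary for levels~$\Gamma[k]$, using the factorization of quadratic transformations through transvections developed in Section~\ref{sec:3}. By hypothesis there is a transvection $\varphi$ such that $\Gamma[k]\bigl(\varphi(F)\bigr)$ is an honest quadrant. The blow-up taking $\cS$ to $\cS^{(1)}$ is a quadratic transformation in some direction~$(1:\alpha:0)$ (the directions $(1:0:0)$ and $(0:1:0)$ being the special cases $\alpha=0$), and I want to commute the transvection $\varphi$ past this blow-up so that, on the $\varphi$-side, I am only ever dealing with a quadrant, to which I can apply the known stability result.

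First I would invoke Lemma~\ref{transvectionsquare}: given the quadratic transformation $\varpi^\fm_{(1:\alpha:0)}$ and the transvection $\varphi$, there exist a direction $(1:\beta:0)$ and a transvection $\psi$ making the square commute, i.e. $\psi \circ \varpi^\fm_{(1:\alpha:0)} = \varpi^\fm_{(1:\beta:0)} \circ \varphi$. The point is that the blow-up of $\cS$ in the direction $(1:\alpha:0)$, followed by the transvection $\psi$, produces the same equation (up to the transvection $\psi$) as first applying $\varphi$ and then blowing up $\varphi(F)$ in the direction $(1:\beta:0)$. Since $\Gamma[k]\bigl(\varphi(F)\bigr)$ is a quadrant, the corollary to Proposition~\ref{PSQ} tells us that after the quadratic transformation in direction $(1:\beta:0)$ the level $\Gamma[k]$ of the resulting equation is again a quadrant.

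It then remains to transport this conclusion back to $F^{(1)}$. Concretely, if $G^{(1)}$ denotes the strict transform of $\varphi(F)$ under $\varpi^\fm_{(1:\beta:0)}$, the commuting square identifies $G^{(1)}$ with $\psi\bigl(F^{(1)}\bigr)$ up to the identification of local {\WT} equations; in other words $F^{(1)}$ and $\psi^{-1}\bigl(G^{(1)}\bigr)$ define the same surface and the same {\WT} normalization. Since $\Gamma[k]\bigl(G^{(1)}\bigr)=\Gamma[k]\bigl(\psi(F^{(1)})\bigr)$ is a quadrant, the transvection $\psi$ (or rather $\psi^{-1}$) is exactly a witness that $\Gamma[k]\bigl(F^{(1)}\bigr)$ is a generalized quadrant, as required by Definition~\ref{def11}.

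The main obstacle I anticipate is not the diagram chase itself—that is handed to us by Lemma~\ref{transvectionsquare}—but rather the bookkeeping of \emph{which} equation the commuting square actually relates and making sure the identification survives passing to local {\WT} representatives. A transvection does not literally commute with the division by $X^n$ that defines the strict transform, and one must check that the square of ring homomorphisms descends to a square of strict-transform equations (this is where the {\WT} condition being preserved under $\{X,Y\}$-changes, noted at the start of Section~\ref{sec:3}, is used). One also has to remember that the corollary to Proposition~\ref{PSQ} is stated for a quadratic transform of $\cS$ with \emph{any} local {\WT} equation $F^{(1)}$; so I must verify that $\varphi(F)$, after blow-up, is being normalized to a genuine {\WT} equation before invoking it. Handling the degenerate directions $\alpha=0$ (where $\varphi$ and the blow-up already commute by the first Lemma of Section~\ref{sec:3}) is routine and can be dispatched in a sentence.
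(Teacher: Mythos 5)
Your treatment of the directions $(1:\alpha:0)$ --- including $(1:0:0)$ --- is essentially the paper's own argument: commute the witnessing transvection $\varphi$ past the blow-up via Lemma~\ref{transvectionsquare}, apply quadrant stability (Proposition~\ref{PSQ} and its corollary for levels) on the $\varphi$-side, and observe that the compensating transvection $\psi$ is precisely the witness that $\Gamma[k]\bigl(F^{(1)}\bigr)$ is a generalized quadrant. Your worry about the division by $X^n$ is harmless here, since transvections fix $X$ and therefore commute with that division.

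The genuine gap is your parenthetical claim that $(0:1:0)$ is one of ``the special cases $\alpha=0$'' of $(1:\alpha:0)$ and can be dispatched in a sentence. The direction $(0:1:0)$ is not of the form $(1:\alpha:0)$ at all: its blow-up reads $X\mapsto XY$, $Y\mapsto Y$, $Z\mapsto YZ$, with division by $Y^n$, and neither Lemma~\ref{transvectionsquare} nor the first Lemma of Section~\ref{sec:3} applies to it. In fact, a transvection $\varphi\colon Y\mapsto Y+v(X)$ does \emph{not} commute with $\varpi^\fm_{(0:1:0)}$ up to a transvection: chasing the square forces the compensating change of variables to be $X\mapsto Xu^{-1}$, $Y\mapsto Yu$, $Z\mapsto Zu^{-1}$ with $u=1+v_1X+v_2X^2Y+\cdots$ a unit, which is not a transvection and so cannot serve as the witness demanded by Definition~\ref{def11}. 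The paper closes this case by a separate direct computation: a generalized-quadrant level means $a_k=X^i\bigl(Y-\sum\alpha_lX^l\bigr)^ju_k(X,Y)$ with $u_k$ a unit, and its $(0:1:0)$-transform is $X^iY^{i+j+k-n}\widetilde{u}_k(X,Y)$, visibly a quadrant (turned into a generalized quadrant by whatever transvection restores the {\GWT} form). You need this, or some equivalent ad hoc argument, before the lemma is proved in all directions.
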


\begin{proof}
Let $\varphi$ be the transvection that takes $\Gamma[k](F)$ into a quadrant, and let $b_k(X,Y)$ be the image of $a_k(X,Y)$ by this transvection. If we look in the direction $(1:\alpha:0)$, by Lemma \ref{transvectionsquare} there exist $\beta$ and $\psi$ such that $\psi\circ\varpi_{(1:\alpha:0)}=\varpi_{(1:\beta:0)}\circ\varphi$. By Proposition \ref{PSQ} $b_k(X,Y)$ is transformed into a quadrant, and hence the claim follows.

The claim is also clear for $\varpi_{(0:1:0)}$. Indeeed, if $\Gamma[k](F)$ is a generalized quadrant then 
$$
a_k(X,Y) = X^i \left( Y-\sum\alpha_lX^l \right)^j u_k(X,Y)
$$
for certain positive integers $i,j$ and unit $u_k$. The image of this coefficient under $\varpi_{(0:1:0)}$ is 
$$
X^iY^{i+j+k-n}\widetilde{u}_k(X,Y),
$$
which is a quadrant, and if we have lost the {\GWT} condition, it will turn into a generalized quadrant.
\end{proof}

 We will now focus  on a coefficient $a_k(X,Y)$ and its associated polygon $\Gamma[k]$. Recall from Section~\ref{S2} that:
\begin{itemize} 
\item The rightmost vertex of $\Gamma[k]$ is $\cR[k]=(\nu_k,0)$.
\item If $\Gamma[k]$ is not a quadrant, the slope $l$ of the compact edge containing $\cR[k]$ is, at most, $-1$. 
\end{itemize}

The two situations that can appear, $l=-1$ and~$l<-1$, feature different (eventual) behaviors and will often be treated separately.

Our goal is to show that, in fact, $\Gamma[k]$ evolves to a generalized quadrant. We study the evolution of the polygon $\Gamma[k]$ under blow-ups in more generality than in section above but with the {\GWT} condition, and paying special attention to the point $\mathcal L[k]$.

As we have seen above, the monoidal transformations centered in $(Z,X)$, which are the only ones we are considering, do not affect the shape of the polygon $\Gamma[k]$, as they are merely translations (in their combinatorial version), so we should only be concerned about the quadratic transformations.

Let us consider $L[k]_2$, the $y$-coordinate of $\cL[k]$. As we noted in Remark~\ref{remT}, if we have a {\WT} equation and we want to take it into a {\GWT} form, the transvection(s) needed do not increase $L[k]_2$. Clearly, a descent of $L[k]_2$ is nice for our purposes, as $L[k]_2 \in \Z_{\geq0}$ and $L[k]_2 = 0$ if and only if $\Gamma[k]$ is a {\GWT} quadrant.

\medskip

Next we consider all possible transformations and argue that if $L[k]_2$ does not tend to zero it is because $\Gamma[k]$ is already a generalized quadrant.

\subsection{Combinatorial Transformations}
\label{subsec:5.1}

By Lemma \ref{2direcciones} the polygon $\Delta(F^{(1)})$ of the transform $F^{(1)}$ after $\varpi_{(1:0:0)}$ is a {\GWT} quadrant whenever $F$ is a {\GWT} equation. 
 We can apply as well this result to the polygon $\Gamma[k](F)$.

Now let us tackle the case of direction $(0:1:0)$.
To begin with, it has the problem that the resulting equation might {\em not} be a {\GWT} equation. However, what is clear is that $L_2$ drops because $(Z,X)$ is not permissible.
To come back to a {\GWT} equation, we need to apply a transvection.

This phenomenon, where the transform of a quadrant is a quadrant, but only after a change of coordinates, is the main idea that led us to the definition and use of generalized quadrants.

\subsection{The Direction $(1:\alpha:0)$: Easy Cases}

Moving on to the general case, and taking into account that the combinatorial transformation amounts to:
$$
(i,j)  \longmapsto   \bigl\{  (i+j+k-n,0),\dots, (i+j+k-n, j)  \bigr\},
$$
it is easy to see that, in the case $l<-1$, once again we have
$$
\Gamma[k]^{(1)} = (\nu_k+k-n,0) + \R^2_{\geq0}.
$$

Assume therefore $l=-1$, in which case we write
$$
\overline{a_k}(X,Y) = X^{r_0} (Y -\beta_1X)^{r_1}\dots (Y -\beta_sX)^{r_s},
$$
with $\nu_k = \sum_{i=0}^s r_i$, and let us denote
$$
\cM = \biggl( r_0, \sum_{j=1}^s r_i \biggr)
$$
the leftmost vertex of the compact facet containing $\cR$.

Then, after the transformation we get
$$
\overline{a_k}(X,Y)  \longmapsto  X^{\sum_{j=0}^s r_j - n} (Y + \alpha - \beta_1)^{r_1}\dots (Y + \alpha - \beta_s)^{r_s},
$$
which implies:
\begin{itemize}
\item If $\alpha \neq \beta_i$ for all $i=1,\dots,s$, then
$$
\Gamma[k]^{(1)} = \Biggl(\sum_{j=0}^s r_j - n , 0 \Biggr) + \R^2_{\geq0},
$$
and we are done.
\item If $s\geq 2$ and $\alpha = \beta_1$ (without loss of generality), then $\overline{a_k}$ is transformed into a polynomial of the form $X^{\nu_k-n}Y^{r_1}u(Y)$, with $u(Y) \in K[[Y]]^*$. Clearly then,
$$
\cL[k]^{(1)} = \left( \nu_k-n,r_1 \right),
$$
and, although the equation might not be {\GWT}, the resulting {\GWT} form will have the same $\cL[k]^{(1)}$ and
$$
\cR[k]^{(1)} = \left( \nu_k+r_1-n,0 \right).
$$
In any case, note that 
$$
L[k]_2 = \sum_{j=1}^s r_i > r_1 \geq L[k]_2^{(1)}.
$$
\end{itemize}

The missing case is, therefore, the one given by $\overline{a_k}(X,Y) = X^r (Y - \alpha X)^s$, and we will devote more time to it.

\subsection{The Direction $(1:\alpha:0)$: The Difficult Case}
\label{difficult}

We look now at the case where we have
$$
L[k]_2^{(1)} = L[k]_2,
$$
which may look as a no-progress situation towards our aim of taking $\Gamma[k]$ to a quadrant. To make it even worse, this situation can be extended to any number of quadratic transformations. Think of the following situation:
$$
a_k(X,Y) = X^r \left( Y - \alpha_1 X - \alpha_2 X^2 - \dots - \alpha_q X^q \right)^s,
$$
where $L[k]_2 = s$. Then, if we consider successive quadratic transformations in the directions $(1:\alpha_1:0)$, \dots,~$(1:\alpha_q:0)$ we have
$$
L[k]_2^{(q)} = \dots = L[k]_2^{(1)} = L[k]_2=s.
$$
Our  aim is to show that this can only happen essentially for such a coefficient $a_k(X,Y)$. That is, if a sequence of quadratic transformations does not decrease $L[k]_2$, then it is due to the fact that $a_k(X,Y)$ is the product of two curves: the exceptional divisor and another one, which determines in which direction one should look in order {\em not to} get any better. 

To prove this, first note that, by Lemma 2, successive quadratic transformations in the directions $(1:\alpha_1:0)$, \dots,~$(1:\alpha_q:0)$ are, up to an initial transvection, $q$ quadratic transformations in the direction $(1:0:0)$. 

So, let us suppose that $L[k]_2$ is invariant by $q$ quadratic transformations in the direction $(1:0:0)$, and write
$$
a_k(X,Y) = X^r b_k(X,Y) = X^r ( Y^s + \sum_{i>0}a_{i+rjk}X^iY^j ).
$$

Applying Weierstrass Preparation Theorem to $b_k(X,Y)$ we know that there exists $u_k(X,Y) \in K[[X,Y]]^*$ and
$$
c_k (X,Y) = Y^s + \sum_{j=0}^{s-1} c_{jk}(X) Y^j = Y^s + \sum_{i=0}^\infty \sum_{j=0}^{s-1} c_{ijk} X^iY^j,
$$
such that $b_k(X,Y) = u_k(X,Y) \cdot c_k(X,Y)$. Note that it must hold 
$$
\nu \bigl( c_{jk} (X) \bigr) \geq s-j,
$$ 
that is $i+j \geq s$, whenever $c_{ijk} \neq 0$.

We are assuming $L[k]_2^{(1)} = L[k]_2$ after a quadratic transformation on the direction $(1:0:0)$, but
\[
    \begin{aligned}
a^{(1)}_k &= X^{r+s+k-n} u_k(X,XY) \cdot \Biggl(  Y^s + \sum_{j=0}^{s-1} c^{(1)}_{k,j}(X) Y^j \Biggr) \\
&= X^{r+s+k-n} u_k(X,XY) \cdot \Biggl(  Y^s + \sum_{i=0}^\infty \sum_{j=0}^{s-1} c_{ijk} X^{i+j-s}Y^j \Biggr).
\end{aligned}
\]

Write $\In_X$ for the initial form with respect to the variable $X$. Note that 
$$
\In_X \bigl( a^{(1)}_k(X,Y) \bigr) = X^{r+s+k-n} Y^s  \iff  i+j > s, \mbox{ for all } c_{ijk} \neq 0,
$$
and both conditions are also equivalent to $L[k]_2^{(1)} = L[k]_2$. So, according to our hypothesis, both conditions hold. If we perform another quadratic transformation, we get
\[
    \begin{aligned}
a^{(2)}_k &= X^{r+2s+2k-2n} u_k(X,X^2Y) \cdot \Biggl(  Y^s + \sum_{j=0}^{s-1} c^{(2)}_{k,j}(X) Y^j \Biggr) \\
&= X^{r+2s+2k-2n} u_k(X,X^2Y) \cdot \Biggl(  Y^s + \sum_{i=0}^\infty \sum_{j=0}^{s-1} c_{ijk} X^{i+2j-2s}Y^j \Biggr),
\end{aligned}
\]
and once again,
\[
    \begin{aligned}
L[k]_2^{(2)} = L[k]_2 & \iff i+2j > 2s, \mbox{ for all } c_{ijk} \neq 0 \\ 
& \iff  \In_X \bigl( a^{(2)}_k(X,Y) \bigr) = X^{r+2s+2k-2n} Y^s.
\end{aligned}
\]

Therefore, the existence of $q$ successive quadratic transformations in the direction $(1:0:0)$ which leave $L[k]_2$ invariant is equivalent to the fact that, in the decomposition
$$
a_k(X,Y) = X^r \cdot u_k(X,Y) \cdot \Biggl( Y^s + \sum_{i=0}^\infty \sum_{j=0}^{s-1} c_{ijk} X^iY^j \Biggr),
$$
we have $i+qj>qs$, for all $(i,j)$ such that $c_{ijk} \neq 0$. 

So, in the Newton diagram of the polynomial $c_k(X,Y) \in K[[X]][Y]$, the invariance of $L[k]_2$ implies that there are no points in the regions shown in Figure~\ref{fig:9}, or, more precisely, each step leaving $L[k]_2$ invariant implies that the corresponding region has no points.

\begin{figure}[htbp]
    \begin{tikzpicture}[scale=1]
    \tikzset{
        every point/.style = {circle, inner sep={2\pgflinewidth}, 
            opacity=1, draw, solid, fill
        },
        point/.style={insert path={node[every point, #1]{}}}, point/.default={},
        point name/.style = {insert path={node[circle,inner sep={2\pgflinewidth}] (#1){}}},
        aux/.style = {black!20}
    }
 
    \coordinate (O) at (0,0);
    \coordinate (L1) at (1.5,2);
    \coordinate (L) at (3,2);
    \coordinate (b1) at (4,0);
    \coordinate (b2) at (5,0);
    
    \draw[dashed] (L1) -- (L1|-O);
    \draw[dashed] (L) -- (L|-O);
    \draw[fill=black!30] (L1) -- (b1) -- (b2) --cycle;
    \draw[] (L1) -- (b1) (L1)--(b2);
    \draw (L) node[point] {} (L1) node [point] {};
    \node[left] at (L1) {$\mathcal{L}[k]^{(q-1)}$};
    \node[right] at (L) {$\mathcal{L}[k]^{(q)}$};
    
    \node[yshift=-.5cm, xshift=-.25cm] (slope) at  (L|-O) {slope $-1/q$};
    \draw [->, shorten >=2pt] (slope.north) to[out=90, in=45+180] ($(L1)!.5!(b1)$) ;
    \node[yshift=1.5cm] (slope1) at (b2) {slope $-1/(q+1)$};
    \draw[->, shorten >=2pt] (slope1.south) to[out=-90, in=45] ($(L1)!.80!(b2)$);

    \draw[->] (0,0) -- (6,0);
    \draw[->] (0,0) -- (0,3);
    \end{tikzpicture}
    \caption{If $L[k]_2^{(q-1)}=L[k]_2^{(q)}$ there cannot be any point of $\Gamma[k]$ in the shaded region.}
    \label{fig:9}
\end{figure}

Therefore we have the following result.

\begin{proposition}
Let $F$ be a {\GWT} equation and $a_k(X,Y)$ one of its coefficients. If  we have $q$ successive quadratic transformations in the directions $(1:\alpha_1:0),\dots,(1:\alpha_q:0)$, with the resulting equations verifying
$$
L[k]_2^{(q)} = \dots = L[k]_2^{(1)} = L[k]_2=s.
$$

Then $a_k(X,Y)$ can be written in the following form:
$$
a_k(X,Y) = X^r \cdot u_k(X,Y) \cdot \Biggl[  \Biggl( Y - \sum_{i=1}^q \alpha_i X^i \Biggr)^s + \sum_{i>q(s-j)} \sum_{j=0}^{s-1} c_{ijk} X^iY^j \Biggr].
$$
\label{monster}
\end{proposition}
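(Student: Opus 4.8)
The plan is to reduce everything to a single distinguished direction, read off a Newton-diagram constraint there, and then transport it back by a transvection. Concretely, I would first invoke the factorization of blow-ups through transvections (the iterated commutative diagram established above): the composite of the quadratic transformations in the directions $(1:\alpha_1:0),\dots,(1:\alpha_q:0)$ coincides, after precomposition with the transvection $\varphi\colon Y\mapsto Y+\sum_{i=1}^q\alpha_iX^i$, with the $q$-fold composite of the quadratic transformation in the single direction $(1:0:0)$. Since a transvection leaves $L[k]_2$ unchanged (Remark~\ref{remT}), the hypothesis $L[k]_2^{(q)}=\dots=L[k]_2=s$ is equivalent to the statement that $L[k]_2$ is preserved under $q$ successive transformations in the direction $(1:0:0)$ applied to the coefficient $\tilde a_k$ of $\varphi(F)$. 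It therefore suffices to analyse that case and then undo $\varphi$.

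Second, I would pin down the algebraic shape of $\tilde a_k$. Factoring out the exceptional part $X^r$ and applying the Weierstrass Preparation Theorem to $\tilde a_k/X^r$, I can write $\tilde a_k=X^r\,u_k\,c_k$ with $u_k$ a unit and $c_k=Y^s+\sum_{j=0}^{s-1}c_{jk}(X)Y^j$ a distinguished polynomial, so that $i+j\ge s$ whenever $c_{ijk}\ne 0$. The transformation in the direction $(1:0:0)$ acts monomially on $c_k$, sending $c_{ijk}X^iY^j$ to $c_{ijk}X^{i+j-s}Y^j$ while multiplying the unit by $u_k(X,XY)$, which is again a unit; crucially, the support $\{(i,j)\colon c_{ijk}\ne0\}$ is preserved and only the $X$-exponents shift. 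Iterating, after $m$ steps the $Y^j$-term carries $X$-exponent $i+mj-ms$, and because the accompanying unit is nonvanishing at $X=0$, $\In_X(\tilde a_k^{(m)})$ is a scalar multiple of $X^{\bullet}Y^s$ exactly when $i+mj>ms$ for every support point with $j\le s-1$. This is precisely the assertion $L[k]_2^{(m)}=s$.

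Third, I would use the monotonicity of these conditions: for $j\le s-1$ one has $s-j\ge 1$, so $i>q(s-j)$ forces $i>m(s-j)$ for every $m\le q$; hence the single hypothesis that $L[k]_2$ stays equal to $s$ throughout the $q$ steps is equivalent to the strongest constraint $i+qj>qs$, i.e.\ $i>q(s-j)$, for all support points with $j\le s-1$. This is the ``empty region'' condition of Figure~\ref{fig:9}. Feeding it back gives $\tilde a_k=X^r\,u_k\bigl(Y^s+\sum_{i>q(s-j)}\sum_{j=0}^{s-1}c_{ijk}X^iY^j\bigr)$, and I would finally undo the initial transvection by substituting $Y\mapsto Y-\sum_{i=1}^q\alpha_iX^i$: since a transvection fixes $X$ and carries units to units, $X^r u_k$ stays of the form $X^r\cdot(\text{unit})$ (renamed $u_k$), while the leading term $Y^s$ becomes $\bigl(Y-\sum_{i=1}^q\alpha_iX^i\bigr)^s$, which reorganises the expression into the displayed form for $a_k$.

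The routine part is the exponent bookkeeping of the second step, which is forced once the Weierstrass form is fixed, together with the immediate monotonicity argument. I expect the genuine care to be needed at the two ends: first, checking that the factorization lemma legitimately replaces the mixed sequence of directions by $q$ copies of $(1:0:0)$ after the single transvection $\varphi$, so that Remark~\ref{remT} applies uniformly; and second, transporting the support condition $i>q(s-j)$ back through the inverse transvection so that it lands on the exact form in the statement, with the leading power reorganised as $\bigl(Y-\sum_i\alpha_iX^i\bigr)^s$ and the correction terms confined to the region above the slope $-1/q$ edge through $\mathcal{L}[k]^{(q)}$. This transport is the delicate point, since the substitution $Y\mapsto Y-\sum_i\alpha_iX^i$ mixes $X$- and $Y$-exponents and one must verify that it does not push monomials out of the admissible region recorded in the statement.
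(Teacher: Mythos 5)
Your proposal is, in substance, the paper's own proof: the reduction of the mixed directions $(1:\alpha_1:0),\dots,(1:\alpha_q:0)$ to $q$ copies of $(1:0:0)$ via the iterated factorization diagram together with the invariance of $L[k]_2$ under transvections (Remark~\ref{remT}); the Weierstrass preparation $\tilde a_k=X^r u_k c_k$ with $i+j\geq s$ on the support of $c_k$; the bookkeeping $c_{ijk}X^iY^j\mapsto c_{ijk}X^{i+mj-ms}Y^j$ after $m$ steps; the equivalence of $L[k]_2^{(m)}=s$ with $i+mj>ms$ on the support; and the monotonicity that collapses the $q$ conditions to the single condition $i+qj>qs$ (the empty regions of Figure~\ref{fig:9}). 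All of this is correct and matches the paper step for step.

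The one place where you stop short --- transporting the support condition back through $Y\mapsto Y-\sum_{i=1}^{q}\alpha_iX^i$ --- is exactly the step the paper omits as well: its proof ends with the condition $i+qj>qs$ in the coordinates of $\varphi(F)$ and then simply asserts the proposition. Your suspicion that this step is delicate is justified; in fact it fails as literally stated, because the region $\{i>q(s-j)\}$ is \emph{not} stable under the inverse transvection. Take $q=s=2$, $c\alpha_1\neq 0$, and
$$
a_k=X^r\Bigl[\bigl(Y-\alpha_1X-\alpha_2X^2\bigr)^2+cX^3\bigl(Y-\alpha_1X-\alpha_2X^2\bigr)\Bigr],
$$
so that the corresponding coefficient of $\varphi(F)$ is $X^r\bigl(Y^2+cX^3Y\bigr)$: its only correction monomial $(3,1)$ satisfies $i+qj=5>4=qs$, hence the hypothesis of the proposition holds. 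But the correction part of $a_k$ itself is $cX^3Y-c\alpha_1X^4-c\alpha_2X^5$, and the monomial $X^4$ sits on the boundary $i=q(s-0)=4$ instead of strictly above it; since the bracket in the asserted form is a distinguished polynomial in $Y$, Weierstrass uniqueness forces $u_k=1$ and the bracket to equal $a_k/X^r$, so no choice of unit or of coefficients can repair this. The conclusion is therefore correct only if the correction monomials are written as $c_{ijk}X^i\bigl(Y-\sum_{l=1}^{q}\alpha_lX^l\bigr)^j$ --- which is what the transport genuinely yields --- or, equivalently, if the displayed form is read in the coordinates of $\varphi(F)$. This defect is harmless for the paper's subsequent use (in the limiting situation \eqref{monstruo} all $c_{ijk}$ vanish, and then the transport is clean), but it does mean that neither your argument nor the paper's establishes the statement in its literal form; yours at least points at the gap instead of stepping over it.
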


Note that the last summand consists of monomials having large degrees (so large that they actually do not interfere substantially in the evolution of the equation). In particular, if there is a monomial in one of the regions of Figure~\ref{fig:9} (say $x+qy \leq qs$), one {\em cannot} have $q$ successive quadratic transformations which leave $L[k]_2$ invariant.

Therefore, if $a_k$ verifies that after successive quadratic transformations in certain directions $\bigl\{(1:\alpha_i:0)\bigr\}$, $L[k]_2$ never drops, we have by the previous proposition that 
\begin{equation}
a_k(X,Y)=X^r \Biggl( Y-\sum_{q\geq 1}\alpha_qX^q \Biggr)^s u_k(X,Y),
\label{monstruo}
\end{equation}
or in other words, $a_k(X,Y)$ is~a generalized quadrant (and so are all $a_k^{(i)}(X,Y))$.

Summing up, in a finite number of steps we get to a generalized quadrant. Hence

\begin{proposition}
Let 
\[
    F=Z^n+\sum_{k=0}^{n-2} a_k(X,Y)Z^k
\]
be a {\WT} equation with $\overline{F}=Z^n$. Following Algorithm~\ref{alg}, in a finite number of steps 
we arrive at a surface with a defining equation $G$ such that every $\Gamma[k](G)$ is a generalized quadrant.
\label{RemS}
\end{proposition}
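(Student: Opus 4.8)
The plan is to track, level by level, the non-negative integer $L[k]_2$ attached to each $\Gamma[k]$, and to show that it can change only finitely many times before the corresponding polygon is forced into a generalized quadrant. Recall that $L[k]_2=0$ exactly when $\Gamma[k]$ is a {\GWT} quadrant. First I would replace $F$ by its {\GWT} form via a transvection (which leaves every $L[k]_2$ unchanged by Remark \ref{remT}), and agree to restore the {\GWT} condition after every blow-up by composing with the transvection that makes the coefficients regular in $X$; again by Remark \ref{remT} these restoring transvections do not disturb any $L[k]_2$.

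The key point is that, for each fixed $k$, the sequence of values of $L[k]_2$ produced by Algorithm \ref{alg} is non-increasing. Monoidal transformations centered at $(Z,X)$ act as translations (Section \ref{subsec:monoidal}) and hence fix $L[k]_2$, while a quadratic transformation either decreases it or leaves it fixed. Indeed, by Proposition \ref{almostall} the direction $(1:0:0)$, and more generally any direction $(1:\alpha:0)$ with $\overline{a_k}(1:\alpha)\neq0$, turns $\Gamma[k]$ into a {\GWT} quadrant, so $L[k]_2$ falls to $0$; the direction $(0:1:0)$ makes $L[k]_2$ drop as well (Section \ref{subsec:5.1}); and the analysis of Section \ref{difficult} shows that the remaining directions $(1:\alpha:0)$, namely those with compact edge of slope $l<-1$ or with $l=-1$ and $\overline{a_k}$ having at least two distinct linear factors through $\mathcal R[k]$, also strictly lower $L[k]_2$. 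The one and only way to keep $L[k]_2$ fixed is the difficult direction, in which $\overline{a_k}=X^{r}(Y-\alpha X)^{s}$.

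Since $(L[k]_2)$ is a non-increasing sequence in $\Z_{\geq0}$ it is eventually constant; let $N_k$ be a step from which it equals its stable value $s_k$. If $s_k=0$ then $\Gamma[k]$ is already a {\GWT} quadrant. If $s_k>0$, then from $N_k$ on no quadratic transformation may lower $L[k]_2$, so by the dichotomy above every one of them is taken in the difficult direction for level $k$, say along $(1:\alpha_1:0),(1:\alpha_2:0),\dots$. Feeding this into Proposition \ref{monster} for each $q$ forces the Weierstrass factor of $a_k^{(N_k)}$ to avoid every region $\{i+qj\leq qs_k\}$; letting $q\to\infty$ collapses it to the pure power $\bigl(Y-\sum_{q\geq1}\alpha_qX^q\bigr)^{s_k}$, so that $a_k^{(N_k)}$ has the shape \eqref{monstruo} and $\Gamma[k]$ is a generalized quadrant. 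Because generalized quadrants are stable under our blow-ups (as shown earlier in this section), taking $N=\max_{0\leq k\leq n-2}N_k$ over the finitely many levels yields a single equation $G=F^{(N)}$ for which every $\Gamma[k](G)$ is a generalized quadrant.

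The main obstacle is the passage $q\to\infty$ in the case $s_k>0$: one must be sure the invariance of $L[k]_2$ really persists over infinitely many quadratic transformations rather than the process degenerating into an endless run of monoidal ones. This is controlled by permissibility, since each monoidal transformation pushes $\Gamma[k]$ toward the axis and only finitely many can be performed before $(Z,X)$ stops being permissible and a quadratic transformation is forced; hence, as long as the multiplicity does not drop, infinitely many quadratic transformations occur and Proposition \ref{monster} may be iterated. Two further points deserve care: the exponents of $\sum_q\alpha_qX^q$ are automatically integral, because the $\alpha_q$ are directions of genuine quadratic transformations, so it does define a transvection; and the argument is genuinely level-by-level, so there is no conflict in the fact that the single blow-up chosen by Algorithm \ref{alg} need not be the difficult direction for all $k$ at once---a direction that is non-difficult for some level $k$ simply forces $L[k]_2$ to drop, which can happen only finitely often.
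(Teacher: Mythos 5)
Correct, and essentially the paper's own argument: the paper proves Proposition \ref{RemS} by precisely this route, namely the case analysis of Sections \ref{subsec:5.1}--\ref{difficult} (each quadratic transformation either lowers the non-negative integer $L[k]_2$ or is the difficult direction for that level), followed by Proposition \ref{monster} applied for every $q$, which forces any coefficient whose $L[k]_2$ never again drops into the form \eqref{monstruo}, i.e.\ a generalized quadrant. The extra care you take --- checking that only finitely many consecutive monoidal transformations can occur, so that an unending process really does contain infinitely many quadratic ones and Proposition \ref{monster} can be iterated with $q\to\infty$, and that the limiting series defines a genuine transvection --- merely makes explicit what the paper compresses into ``Summing up, in a finite number of steps we get to a generalized quadrant''; in particular, your level-by-level use of the $(0:1:0)$ case rests on the same assertion the paper itself makes in Section \ref{subsec:5.1}, so there too you are reproducing the source rather than introducing a new gap.
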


Unfortunately it is not possible to bound the number of blow-ups we need to perform to get to the situation described in Proposition~\ref{RemS}, neither in terms of the polygon $\Delta(F)$ nor of the polygon $\Gamma[k]$, as the next example shows. 

\begin{example}
    Consider
    \[a_k(X,Y)=X^r\biggl(Y-\sum_{q\geq 1}\alpha_q X^q\biggr)^su_k(X,Y)+X^mY^l\]
 with $u_k$ a unit, $m>r+s$ and $l\in\Z_{l\geq 0}$. The number of blow-ups we need to perform before we get to a generalized quadrant depends on $m$, but $m$ cannot be read off neither $\Delta(F)$ nor $\Gamma[k]$.
\end{example}

The situation where every coefficient gives a generalized quadrant is much easier to control. We will not need in fact to use {\GWT} equations from this point on. In particular, we obtain the following result.

\begin{proposition}
Let 
\[
    F=Z^n+\sum_{k=0}^{n-2} a_k(X,Y)Z^k
\]
be a {\WT} equation. Then there exists an integer $r \geq0$ verifying that, up to some transvection, every $\Gamma[k]\left( F^{(r)} \right)$ is a quadrant.
\label{PF}
\end{proposition}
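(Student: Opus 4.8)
The plan is to reduce everything to Proposition~\ref{RemS}, which already provides a finite sequence of blow-ups after which every $\Gamma[k]$ is a generalized quadrant. First I would invoke it to pass to an equation $G=F^{(r_0)}$ whose coefficients have the normal form of~\eqref{monstruo}, namely $a_k(X,Y)=X^{r_k}\bigl(Y-\phi_k(X)\bigr)^{s_k}u_k(X,Y)$ with $u_k$ a unit, $s_k=L[k]_2\geq 0$, and $\phi_k\in XK[[X]]$ (the series $\phi_k$ being irrelevant when $s_k=0$, in which case $\Gamma[k](G)$ is already a \GWT{} quadrant with vertex on the $OX$-axis). The key observation is then a clean criterion: a \emph{single} transvection $\varphi\colon Y\mapsto Y+\psi(X)$ turns \emph{all} the $\Gamma[k]$ into quadrants simultaneously if and only if $\psi=\phi_k$ for every level with $s_k>0$. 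Indeed, a \GWT{} quadrant (vertex on the $OX$-axis) is left invariant by any transvection, while for $s_k>0$ a mismatch $\psi-\phi_k\neq 0$ of order $t$ produces the factor $(Y+\psi-\phi_k)^{s_k}$, whose Newton polygon has a compact edge of slope $-1/t$ and so is not a quadrant. Hence the whole statement reduces to reaching a stage at which all the levels with $s_k>0$ share one common series $\psi$.

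To arrive at such a stage I would keep running Algorithm~\ref{alg} and track the effect of each blow-up on these normal forms, using the computations of Section~\ref{difficult} and Proposition~\ref{monster}, together with the stability of generalized quadrants. Monoidal transformations centered at $(Z,X)$ are mere translations: they only lower $r_k$ and leave $\phi_k$ and $s_k$ untouched. A quadratic transformation in a direction $(1:\beta:0)$ acts level by level exactly as computed in Section~\ref{difficult}: if the leading term of $\phi_k$ equals $\beta X$, then $\phi_k$ loses that term (it is shifted down by one degree) while $s_k$ is preserved; if instead the leading term of $\phi_k$ differs from $\beta X$, then $a_k^{(1)}=X^{c}\cdot(\text{unit})$ collapses to a \GWT{} quadrant and $s_k$ drops to $0$. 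Thus each quadratic transformation peels off one coefficient of every surviving $\phi_k$ and, at the same time, \emph{resolves} (sends to a \GWT{} quadrant on the $OX$-axis) every level whose current leading term is not the chosen one.

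The hard part is termination, and this is exactly where the finiteness of the number of levels becomes decisive. There are at most $n-1$ indices $k$, hence at most $n-1$ levels can ever be \emph{active} (have $s_k>0$). If two active levels carry distinct series, first differing at order $t$, then they cannot both survive $t$ quadratic transformations: at the $t$-th such step their leading terms differ, the chosen direction matches at most one of them, and the other collapses to a \GWT{} quadrant. Since, as long as the multiplicity has not dropped, finitely many monoidal transformations exhaust the permissibility of $(Z,X)$ and force a new quadratic transformation, infinitely many quadratic transformations occur along the process. Consequently the number of active levels is a non-increasing nonnegative integer that strictly decreases each time a disagreement surfaces, so disagreements can surface only finitely often; after finitely many blow-ups all remaining active levels share a single series $\psi$. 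Taking $r$ to be the number of blow-ups performed up to that moment and $\varphi\colon Y\mapsto Y+\psi$, the active levels become $X^{r_k}Y^{s_k}\cdot(\text{unit})$, whereas the already-resolved \GWT{} quadrants are unaffected by $\varphi$; hence every $\Gamma[k]\bigl(F^{(r)}\bigr)$ is a quadrant up to the transvection $\varphi$, as claimed. I expect the only delicate bookkeeping to be confirming that every ``resolved'' level really lands on the $OX$-axis (so that it stays transvection-invariant) and making rigorous that the active set cannot shrink infinitely often.
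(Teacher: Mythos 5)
Your argument is essentially the paper's own: both reduce to Proposition~\ref{RemS}, exploit the dichotomy that a quadratic transformation in a direction $(1:\beta:0)$ either strips one term off a surviving series $\phi_k$ or collapses that level to a \GWT{} quadrant (which is thereafter transvection-invariant), and conclude from the finiteness of the number of levels that all still-active levels must eventually share a single series, removable by one common transvection. The only stylistic difference is that the paper packages termination statically --- it sets $r=\max_{i<j}\nu(\varphi_i-\varphi_j)$, so that two levels with distinct associated transvections cannot both survive $r$ quadratic transformations --- whereas you argue dynamically with the number of active levels as a decreasing counter; this is the same argument in different clothing.

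The one case you omit is the direction $(0:1:0)$, which Algorithm~\ref{alg} may select and which your bookkeeping does not cover. It is not entirely innocent for your formulation: under $\varpi^{\fm}_{(0:1:0)}$ an already-resolved level $X^{m}\cdot(\text{unit})$ becomes $X^{m}Y^{m-(n-k)}\cdot(\text{unit})$, so resolved levels can become active again and your claim that the number of active levels never increases is, strictly speaking, false along such a branch. The omission is harmless in substance, because after a $(0:1:0)$ transformation \emph{every} level is a monomial times a unit, hence already a quadrant (equivalently, all levels become active with the common series $\psi=0$), so the conclusion of the Proposition holds at that very stage with the identity transvection --- this is exactly how the paper dispatches the case in one sentence ("if we choose the direction $(0:1:0)$ at any moment, generalized quadrants turn into quadrants, so we are done"). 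With that case added, your proof is complete and coincides with the paper's.
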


\begin{proof}
By Propostion \ref{RemS}, in a finite number of steps we arrive at the situation where every $\Gamma[k](F)$ is a generalized quadrant. We will prove that by blowing up further we get to the situation of the claim. As always, we only have to be concerned about quadraic transformations, since monoidal transformations correspond to translations of the polygons.

If we choose the direction $(0:1:0)$ at any moment, generalized quadrants turn into quadrants, so we are done. Now, for every $k=0,...,n-2$ there is a transvection $\varphi_k$ such that $\Gamma[k](\varphi_k(F))$ is a quadrant. Consider the numbers:
$$
\mu_{ij} = \left\{ \begin{array}{ll} 
\nu \left( \varphi_i - \varphi_j \right) & \mbox{ if } \varphi_i \neq \varphi_j \\
0 & \mbox{ otherwise }
\end{array} \right.
$$
and let us define 
$$
r = \max_{i<j} \left\{ \mu_{ij} \right\}.
$$

It is clear that, after $r$ transformations (assuming the multiplicity has not changed) with directions $\{(1:\alpha_1:0),...,(1:\alpha_r:0)\}$, the coefficients of $F^{(r)}$ fall in two sets:
\begin{itemize}
\item Those verifying
$$
\varphi_k (Y) = Y + \beta_1 X + ... + \beta_r X^r + ..., 
$$
where, for some $j \in \{1,...,r\}$, $\alpha_j \neq \beta_j$. Then, precisely in the $j$--th quadratic transform $\Gamma[k] \left(F^{(j)}\right)$ is a {\GWT} quadrant. And of course it stays a quadrant for the transformations to come.
\item Those verifying 
$$
\varphi_l (Y) = Y + \alpha_1 X + ... + \alpha_r X^r + X^r \psi_l(X), \mbox{ with } \nu \left( \psi_l (X) \right) > 1.
$$
By definition of $r$, all these coefficient must have the same associated transvection.
\end{itemize}

Then, if we consider $F^{(r)}$, the transvection (in the notation above)
$$
\varphi(Y) = Y + \psi_l(X),
$$
which might well be the identity, takes into a quadrant all the $\Gamma[l]\left( F^{(r)} \right)$ which have not being taken into {\GWT} quadrants previously. Note that those {\GWT} quadrants are invariant under transvections
\end{proof}

\begin{remark}
The process of passing from the condition all $\Gamma[k]$ generalized quadrants to all $\Gamma[k]$ quadrants in the previous result is effectively computable (following the proof), not in terms of the combinatorics, but in terms of the factorization of the coefficients.
\end{remark}

Once we have proved these results we can answer our original question: does $\Delta(F)$ tend to a quadrant? The answer is in the affirmative although, of course, before the quadrant situation is reached it may happen that the multiplicity drops or the tangent cone ceases to be a plane (which will receive its own formal treatment in the next section).

\begin{proposition}
Let $\cS$ be an embedded algebroid surface of multiplicity $n>1$, $F$ an equation with $\overline{F}=Z^{n}$. Applying Algorithm~\ref{alg}, there exists an integer $r\geq 0$ such that $\overline{F^{(r)}}\neq Z^{n}$ or $\Delta(F^{(r)})$ is a quadrant.
\end{proposition}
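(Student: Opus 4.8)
The plan is to read the statement as a dichotomy and to run Algorithm~\ref{alg} under a standing assumption. If at some finite stage $\overline{F^{(r)}}\neq Z^n$ we are immediately in the first alternative, so I would assume, aiming at a contradiction, that $\overline{F^{(i)}}=Z^n$ for every $i$ produced by the algorithm and that $\Delta(F^{(i)})$ is never a quadrant. Under $\overline{F^{(i)}}=Z^n$ each $F^{(i)}$ keeps multiplicity $n$ with planar tangent cone; it may be taken {\WT}, so that only quadratic transformations and monoidal transformations on $(Z,X)$ occur, the latter being translations of the polygon (Section~\ref{subsec:monoidal}). Combinatorially this persistence says $\Delta(F^{(i)})\subset\{x+y\geq1\}$ throughout, and whenever a quadratic transformation is applied the polygon must already sit high enough to stay in this half-plane.

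The correct target is not ``$\Delta$ a generalized quadrant'' but the sharper condition that every level be a {\GWT} quadrant, i.e.\ $L[k]_2=0$ for all $k$. I would isolate the elementary lemma: if $G$ is {\GWT} and each $\Gamma[k](G)$ is a quadrant, then its unique vertex must coincide with $\cR[k]=(\nu_k,0)$, so every level contributes to $\Delta(G)$ the single point $(\nu_k/(n-k),0)$ on the $OX$-axis and $\Delta(G)=\bigl(\min_k\nu_k/(n-k),0\bigr)+\Q^2_{\geq0}$ is a quadrant. This sharpening is essential: because of the scaling by $1/(n-k)$, a level whose quadrant has an off-axis vertex $(r_k,s_k)$ contributes the corner $(r_k/(n-k),s_k/(n-k))$, and the convex hull of such corners at different levels generally has several vertices. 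Hence ``all $\Gamma[k]$ quadrants up to a transvection'' (Proposition~\ref{PF}) is not by itself enough, and the real content is to drive all the $L[k]_2$ to zero simultaneously by blow-ups.

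For the descent I would use the machinery of Sections~\ref{secquad}--\ref{difficult}. By Proposition~\ref{RemS} finitely many steps bring us to all $\Gamma[k]$ being generalized quadrants, and the easy-case analysis preceding Section~\ref{difficult} shows that each quadratic transformation in a direction transverse to the hidden curves (in particular $(0:1:0)$, or $(1:\beta:0)$ with $\beta$ off the relevant curve) makes the corresponding $L[k]_2$ drop, while the integer $L[k]_2\in\Z_{\geq0}$ cannot increase (cf.\ Remark~\ref{remT} and the analysis of Section~\ref{difficult}). Thus the only way $\Delta(F^{(i)})$ could fail to become a quadrant forever is that some level stays frozen at $L[k]_2=s>0$, which by Proposition~\ref{monster} forces $a_k=X^r\bigl(Y-\sum_{q\geq1}\alpha_qX^q\bigr)^s u_k$ and the algorithm to keep following the directions $(1:\alpha_q:0)$ of that hidden curve.

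The main obstacle is to rule out this frozen regime under the persistence hypothesis $\overline{F^{(i)}}=Z^n$. Here I cannot argue from the polygon alone---Example~\ref{ejemplito} and the example after Proposition~\ref{RemS} show the decisive data is invisible in $\Delta$ and $\Gamma[k]$---so I would argue from the factorization, tracking the leftmost vertex. A quadratic transformation along the curve sends the frozen vertex $(L[k]_1,s)\mapsto(L[k]_1+s+k-n,s)$, so in $\Delta$ its value of $x+y$ changes by $(s-(n-k))/(n-k)$ per step. When $s<n-k$ this is negative, and after finitely many steps some vertex lands with $x+y<1$: the multiplicity drops and $\overline{F^{(r)}}\neq Z^n$, contradicting the persistence hypothesis. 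When $s\geq n-k$ for every frozen level, the factor $\bigl(Y-\sum\alpha_qX^q\bigr)^{\,n-k}$ divides every $a_k$, i.e.\ there is a permissible curve on which Algorithm~\ref{alg} does not center; the remaining---and hardest---task is to show that the quadratic transformations \emph{solve it along the way}, eventually aligning it with $(Z,X)$ so that $L[k]_2$ drops, or else again dropping the multiplicity. Establishing this finiteness without any polygon-based bound (none exists, by the cited examples) is the crux; once it is in place, all $L[k]_2$ reach $0$ at some finite stage, the lemma of the second paragraph applies, and $\Delta(F^{(r)})$ is a quadrant, completing the dichotomy.
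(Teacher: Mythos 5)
Your plan contains a genuine gap, and you name it yourself: the ``frozen regime'' in which some level keeps $L[k]_2=s>0$ along the directions of a hidden curve is never actually disposed of. Your case $s<n-k$ is fine (the frozen vertex drifts to $x+y<1$ and the multiplicity drops), but in the case $s\geq n-k$ you only observe that a permissible curve not of the form $(Z,X)$ or $(Z,Y)$ seems to be present, and then declare that showing it gets resolved ``along the way'' is ``the crux'' to be established. A proof that stops at its crux is not a proof. Moreover, even the setup of that case overreaches: Proposition~\ref{monster} gives the factorization $a_k=X^r\bigl(Y-\sum_{q\geq1}\alpha_qX^q\bigr)^su_k$ only for the \emph{frozen} levels, so divisibility of \emph{every} $a_k$ by $\bigl(Y-\sum_{q\geq1}\alpha_qX^q\bigr)^{n-k}$ --- hence permissibility of the hidden curve --- does not follow; non-frozen levels need not contain that factor at all.

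The gap stems from your decision to discard Proposition~\ref{PF} as ``not by itself enough.'' The observation behind that decision is correct but beside the point: all $\Gamma[k]$ being quadrants indeed does not make $\Delta$ a quadrant, yet the paper never uses that implication. The paper's proof starts from Proposition~\ref{PF} --- whose proof is precisely where hidden curves are eliminated, via Proposition~\ref{RemS} and the comparison of the transvections $\varphi_k$ --- and then keeps blowing up. Once every $\Gamma[k]$ is a genuine quadrant, each $a_k$ is a monomial times a unit, so your frozen regime simply cannot occur: a quadratic transformation in any direction $(1:\alpha:0)$ with $\alpha\neq0$ collapses every level to a vertex on the $OX$-axis simultaneously, making $\Delta(F^{(1)})$ a {\GWT} quadrant; and the only remaining directions, the combinatorial ones $(1:0:0)$ and $(0:1:0)$, strictly decrease $d(\cL,\cR)$ by Corollary~\ref{distancia}, preserving the quadrant property of every level. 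Since all vertices have coordinates with denominators at most $n$, this decrease is bounded below and $\cL=\cR$ is reached in finitely many steps, unless the multiplicity drops or the tangent cone ceases to be a plane first, which is the other branch of the dichotomy. By rerouting around Proposition~\ref{PF} you set yourself the task of re-proving its hardest content, and got stuck exactly there.
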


\begin{proof}
We can assume now that $F$ verifies that $\Gamma[k](F)$ is a quadrant for $k=0,...,n-2$. Let us write
$$
\Gamma[k](F) = \left( a_k, b_k \right) + \Z^2_{\geq 0}.
$$

As monoidal transformations take quadrants into quadrants we only have to be concerned about quadratic transformations. Now, if $\alpha \neq 0$, a quadratic transformation in the direction $(1:\alpha:0)$ takes 
$\Gamma[k](F)$ into
$$
\Gamma[k] \left( F^{(1)} \right) = \left( a_k + b_k - k, 0 \right) + \Z^2_{\geq 0},
$$
therefore it is a {\GWT} quadrant (and so is $\Delta \left( F^{(1)} \right)$ henceforth).

On the other hand, if we perform successive combinatorial quadratic transformations, the Corollary 1 tell us that $d(\cL, \cR)$ decreases in each transformation, hence we arrive to a quadrant situation eventually.
\end{proof}

This is, therefore, our solution to Problem 6 in \cite{HS}, for the algebroid situation.

Of course it may happen that the multiplicity drops before we get to the quadrant situation, as the following example shows.

\begin{example}
Consider the surface defined by 
$$F=Z^n+X^{n+1}+Y^{n+1}$$
Blowing up the origin once and the multiplicity drops in any direction of the tangent cone.
\label{ejCP}
\end{example}

\section{Bounding the Resolution Process}
\label{Sec6}
Our aim in this section will be to prove the following result.

\begin{theorem}
Let $\cS$ be an embedded algebroid surface defined by a {\GWT} equation $F\in K[[X,Y]][Z]$. Assume we apply the following algorithm:
\begin{itemize}
\item If $(Z,X)$ or $(Z,Y)$ are permissible curves, we make a monoidal transformation centered in them.
\item Otherwise we perform a quadratic transformation.
\end{itemize}
Then the multiplicity of $\cS$ drops in a finite number of steps.
\label{Teorema}
\end{theorem}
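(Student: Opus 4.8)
The plan is to reduce the general statement to the two cases already developed in the paper, according to the shape of the tangent cone, and then to track a suitable numerical invariant that must strictly decrease after finitely many blow-ups. The multiplicity $n$ cannot increase under any of the blow-ups in Algorithm~\ref{alg}, so the whole task is to rule out an infinite sequence of blow-ups in which the multiplicity stays equal to $n$ forever. Under that standing assumption (no drop in multiplicity) all the combinatorial machinery of Sections~\ref{secquad} and the generalized-quadrant section applies, since those results are precisely formulated under the hypothesis that the multiplicity does not drop.

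I would split into the plane and non-plane tangent cone cases. If at some stage $\overline{F^{(r)}}\neq Z^{n}$, i.e.\ the tangent cone is not a plane, this is the case the paper handles separately (Section~\ref{sec:I}, referenced but not in the excerpt), so I would invoke that treatment and conclude the multiplicity drops. The substantive case is $\overline{F}=Z^n$, the plane tangent cone. Here the strategy is to first arrive, by Proposition~\ref{PF}, at a stage where (up to a transvection) every $\Gamma[k](F^{(r)})$ is a quadrant; equivalently $\Delta(F^{(r)})$ is a quadrant, possibly after the transvection turning the {\WT} equation into {\GWT} form. By Proposition~\ref{PSQ} and its corollary, together with the stability of generalized quadrants, once we are in the quadrant situation we stay there under all subsequent blow-ups. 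So the problem is reduced to: starting from a {\GWT} quadrant, show the multiplicity drops in finitely many further blow-ups.

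In the {\GWT} quadrant situation the combinatorics become completely transparent, since each $\Gamma[k]$ is a single translated quadrant $(a_k,0)+\Z^2_{\geq0}$ and $\Delta(F)$ has the single vertex $\cL=\cR$. Monoidal transformations centered in $(Z,X)$ act as the translation $(-1,0)$ on $\Delta(F)$, while quadratic transformations in a direction $(1:\alpha:0)$ with $\alpha\neq0$ collapse each $\Gamma[k]$ to $(a_k+b_k-k,0)+\Z^2_{\geq0}$ and keep us in a {\GWT} quadrant. The key point is that the multiplicity drops precisely when some point $(a,b)\in\Delta(F^{(r)})$ satisfies $a+b<1$, as noted right after the definition of $\Delta(F)$. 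For the single vertex $\cL=(L_1,0)$ of a {\GWT} quadrant this is the condition $L_1<1$. Since repeated monoidal transformations in $(Z,X)$ decrease $L_1$ by $1$ each time (and are exactly the blow-ups Algorithm~\ref{alg} selects while $(Z,X)$ remains permissible, i.e.\ while $\Delta(F)\subset\{x\geq1\}$), after at most $\lceil L_1\rceil$ such steps we force $L_1<1$ and the multiplicity drops. The anticipated main obstacle is bookkeeping the interaction with quadratic transformations: one must verify that the invariant being decreased—some combination of the number of compact facets, the distance $d(\cL,\cR)$ from Corollary~\ref{distancia}, and the vertex coordinate $L_1$—is genuinely monotone along \emph{any} branch of choices Algorithm~\ref{alg} makes, and that it is a well-founded quantity (a tuple of nonnegative integers ordered lexicographically, say) so that no infinite strictly-decreasing chain exists.

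Concretely, I would set up a lexicographic invariant such as $\bigl(\text{number of compact facets of }\Delta,\ d(\cL,\cR),\ L_1\bigr)$ and argue that, as long as the multiplicity has not dropped and the tangent cone stays a plane, each blow-up in the algorithm strictly decreases this tuple. Proposition~\ref{PF} (finitely many steps to the all-quadrants situation) controls the passage to quadrants; Corollary~\ref{distancia} controls the facet count and the distance during quadratic transformations; and the final $L_1$-descent via monoidal transformations in $(Z,X)$ forces the drop. Since a tuple of nonnegative integers cannot decrease infinitely in the lexicographic order, the process terminates with a drop in multiplicity, which is exactly the assertion of Theorem~\ref{Teorema}.
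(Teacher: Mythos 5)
Your overall skeleton is the paper's own: split by the shape of the tangent cone, defer the non-plane case to Section~\ref{sec:I}, and in the plane case first reach a prepared situation via Proposition~\ref{PF} and then run a descent. However, there is a genuine error at the pivot of your plane-cone argument: you assert that ``every $\Gamma[k](F^{(r)})$ is a quadrant'' is \emph{equivalent} to ``$\Delta(F^{(r)})$ is a quadrant''. This is false. The polygon $\Delta(F)$ is the convex hull of the union of the scaled projections of the individual levels $\Gamma[k]$, and vertices coming from different levels $k$ generally produce several vertices and compact facets of $\Delta(F)$. The paper's last example, $F=Z^5+X^2YZ^3+X^3Y^3$, is prepared in the sense of Definition~\ref{prep} (each $\Gamma[k]$ is a quadrant), yet $\Delta(F)$ has the two vertices $\cL=(3/5,3/5)$ and $\cR=(1,1/2)$ and so is not a quadrant; no transvection repairs this. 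This is precisely why the paper keeps ``prepared'' and ``$\Delta(F)$ is a quadrant'' as distinct hypotheses (Theorem~\ref{cuenta} versus Proposition~\ref{cuadrante}). Your reduction ``so the problem is reduced to: starting from a {\GWT} quadrant\dots'' therefore skips the substantive case, which is prepared equations whose $\Delta$ is not a quadrant; your explicit descent (monoidal transformations on $(Z,X)$ decreasing $L_1$ below $1$) only reproves Corollary~\ref{Cor}.

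Your backup lexicographic invariant $\bigl(\text{facet count},\,d(\cL,\cR),\,L_1\bigr)$ does not close this gap, for three reasons. First, $d(\cL,\cR)$ and $L_1$ are rationals (coordinates have denominators $n-k$), not integers, so ``a tuple of nonnegative integers cannot decrease infinitely'' does not apply as stated; termination requires each decrease to be bounded below, which is exactly what the paper's invariant $R_1+L_2$ provides in the proof of Theorem~\ref{cuenta}: it drops by at least $1/n$ under any quadratic transformation and by exactly $1$ under any monoidal one. Second, your tuple is \emph{not} strictly decreasing along all branches of Algorithm~\ref{alg}: a monoidal transformation centered at $(Z,Y)$ is the translation by $(0,-1)$, which leaves the facet count, $d(\cL,\cR)$ and $L_1$ all unchanged (this is why the paper's invariant includes $L_2$). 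Third, for the exceptional directions $(1:\alpha:0)$ the evolution of $\Delta$ is not a function of $\Delta$ at all (Example~\ref{ejemplito}), so monotonicity there cannot be ``verified'' from the polygon; it must be derived from preparedness --- no cancellation can occur at the vertex $\cM$ minimizing $P_1+P_2$, whence $\Delta(F^{(1)})$ is a quadrant with vertex $(M_1+M_2-1,0)$ --- which is the content of the third bullet of the proof of Theorem~\ref{cuenta}. The repair is to replace your tuple by the paper's single quantity $R_1+L_2$ after invoking Proposition~\ref{PF}, and to note that preparedness is preserved by all the transformations of Algorithm~\ref{alg}.
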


Note that, what differs from Levi-Zariski strategy is that the only permissible curves we blow-up are $(Z,X)$ and $(Z,Y)$. \emph{At the same time}, and starting from a {\GWT} equation, we will try to bound the number of blow-ups we perform before the multiplicity drops. 

The study of the evolution of the combinatorial objects through the resolution process will be split in two parts depending on $\overline{F}$, as the arguments involved are rather different, both algebraically and geometrically.

\subsection{Bounding the Resolution Process (I): The Tangent Cone is not a~Plane}
\label{sec:I}

Let $\cS$ be an embedded algebroid surface and $F$ a {\GWT} equation as above. We will study the case where $\overline F$ is not the power of a linear form (equivalently, $\cC$ is not a plane). In this case, therefore, we must have $\cR=(1,0)$.

Assume that we can perform a combinatorial monoidal transformation. There cannot be more than one permissible curve and it must be $(Z,X)$. Then, $F$~being a {\GWT} equation, $\Delta(F)$ can only be a quadrant of vertex $(1,0)$ (otherwise $(Z,X)$ would not be permissible). In particular $\overline F$ does not depend on $Y$,
$$
\overline F=Z^n+\sum_{i+k=n}a_{i0k}X^iZ^k=\prod_{l=1}^n(Z-\alpha_l X), \mbox{ with all } \alpha_l \neq 0.
$$

We have to look now at the equation for the monoidal transformation in a generic direction $(1:0:\alpha_{l_0})$ of the exceptional divisor. This equation is
$$
F^{(1)}=(Z+\alpha_{l_0})^n+\sum a_{ijk}X^{i+k-n}Y^j(Z+\alpha_{l_0})^k.
$$

Here we find, from $\overline F$, some monomials corresponding to the product
$$
\prod_{l=1}^n(Z+\alpha_{l_0}-\alpha_l) = \overline{F} ( 1,Z + \alpha_{l_0} )
$$
but not all of the monomials, different from $Z^n$, can cancel. For instance, the coefficient of $Z^{n-1}$ is
$$
\sum_{l=1}^n ( \alpha_{l_0} - \alpha_l ) = n \alpha_{l_0} \neq 0,
$$
taking into account that $\sum \alpha_l = 0$, since in particular $F$ is a {\WT} equation and therefore $a_{n-1}(X,Y)=0$.

Hence, there is a monomial in $F^{(1)}$ of order strictly smaller than $n$ and we are done. Therefore, we can suppose that $(Z,X)$ is not a permissible curve and we are forced to apply a quadratic transformation. 

Let us choose a point $(\alpha:\beta:\gamma) \in \cC$ and assume first that $\alpha=1$, without loss of generality. Then the equations of the blow-up are:
$$
X \longmapsto X, \quad Y \longmapsto X ( Y + \beta), \quad Z \longmapsto X ( Z + \gamma),
$$
and the equation $F^{(1)}$ looks like
$$
F^{(1)} = (Z + \gamma)^n + \text{(terms of degree $\leq (n-2)$ in $Z$)}.
$$

Therefore, if $\gamma \neq 0$ there is at least a monomial of degree $n-1$ and we are done. Let us have a look at the case of direction $(1:\beta:0)$. Take some $a_k$ such that $\nu(a_k) = n-k$, which must always exist in this case.

Then we must have
$$
\overline{a_k}(X,Y)Z^k = \sum_{i+j=n-k} a_{ijk} X^iY^jZ^k = X^r (\beta_1 X - Y)^{s_1} \dots (\beta_l X - Y)^{s_l}Z^k,
$$
with $r \neq 0$ and $r + \sum s_i = n-k$. These monomials are transformed by the quadratic transformation in the direction $(1:\beta:0)$ into
$$
\sum_{i+j=n-k} a_{ijk} ( Y + \beta )^j Z^k = (\beta_1 - \beta + Y)^{s_1} \dots (\beta_l - \beta + Y)^{s_l}Z^k,
$$
and for similar reasons as above, we have a monomial of degree strictly smaller than~$n$.

The only remaining case is that of direction $(0:1:0)$. As we are assuming now that $(Z,X)$ is not permissible, there must be some $(\lambda, \mu) \in \Delta(F)$ such that $\lambda <1$. In fact, $\cL$ must be in this situation. The effect of this transformation in combinatorial terms amounts to:
$$
(\lambda, \mu) \longmapsto (\lambda, \lambda + \mu - 1).
$$

\begin{figure}[htbp]
    \begin{tikzpicture}[scale=1]
    \tikzset{
    every point/.style = {circle, inner sep={2\pgflinewidth}, 
        opacity=1, draw, solid, fill
    },
    point/.style={insert path={node[every point, #1]{}}}, point/.default={},
    point name/.style = {insert path={node[circle,inner sep={2\pgflinewidth}] (#1){}}},
    aux/.style = {black!20}
    }

    \coordinate (O) at (0,0);
    \coordinate (B) at (4,4);
    \coordinate (A1) at (.5,3.5);
    \coordinate (A2) at (.75,2.5);
    \coordinate (A3) at (2,.75);
    \coordinate (A4) at (2.75,0);
    
    \fill [black!20] (B-|A1) -- (A1) -- (A2) --(A3) --(A4) --(A4-|B) --(B) --cycle;
    \draw[->] (O)  -- (O|-B)  ;
    \draw[->] (O)  -- (O-|B)  ;
    \draw[] (B-|A1)--(A1)--(A2) (A3)--(A4)--(A4-|B);
    \draw[dashed] (A2)-- (A3);
    
    \coordinate (L) at (A4-|B);
    
    \draw (A1)--(A4);
    \draw (A1) [point];
    \draw (A4) [point];
    \node [below] at (A4) {$\mathcal{R}$};
    \node [left]  at (A1) {$\mathcal{L}$};
    \pic [draw, ->, "$\theta$", angle eccentricity=1.5] {angle = L--A4--A1};
    \end{tikzpicture}
    \caption{The angle $\theta$ measures the total number of blow ups when the tangent cone is not a plane.}
\end{figure}

So, let us call $\theta$ the angle defined by the point of the $OX$-axis at infinity, $\cR = (1,0)$ and $\cL = (\lambda, \mu)$. Only $\cL$ is changed by the transformation, and it changes as above. The change of $\theta$ (to, say $\theta ^{(1)}$) can be measured clearly on its tangent, as
$$
\tan (\theta) = \frac{\mu}{\lambda - 1} \longmapsto \tan \big( \theta^{(1)} \big) = \frac{\lambda + \mu - 1}{\lambda - 1} = \tan (\theta) + 1.
$$

Eventually, after $[-\tan(\theta)]$ iterations, we will have that the tangent of the resulting angle will be in $(-1,0]$. This implies there is a monomial with degree strictly smaller than $n$ and we are done. 

So we have proved the following result.

\begin{theorem}
If $\cC$ is not a plane, the maximum number of blow-ups that can be performed in $\cS$ before the multiplicity drops is:
\begin{itemize}
\item $1$, if $\cL = \cR$.
\item $[-\tan (\theta)]$ otherwise, with the previous notations.
\end{itemize}
\label{cota1}
\end{theorem}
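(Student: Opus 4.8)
The plan is to run the case analysis dictated by Algorithm~\ref{alg}, exploiting throughout that $\cR=(1,0)$ whenever $\cC$ is not a plane, and to organize the proof around which blow-up the algorithm is forced to perform. Since the statement separates $\cL=\cR$ from $\cL\neq\cR$, these correspond exactly to the monoidal branch and the quadratic branch.

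First I would dispose of the case $\cL=\cR$. Here $\Delta(F)$ is the quadrant with single vertex $(1,0)$, so $\Delta(F)\subset\{x\geq 1\}$ and $(Z,X)$ is permissible; moreover $\overline{F}$ is independent of $Y$ and factors as $\prod_{l=1}^n(Z-\alpha_l X)$ with all $\alpha_l\neq 0$. I would then blow up $(Z,X)$ in a generic direction $(1:0:\alpha_{l_0})$ of the exceptional divisor and extract from $F^{(1)}$ a monomial of degree $<n$: the coefficient of $Z^{n-1}$ equals $\sum_l(\alpha_{l_0}-\alpha_l)=n\alpha_{l_0}$, which is nonzero because $\sum_l\alpha_l=0$ (a consequence of $a_{n-1}=0$ for a {\WT} equation) and $\alpha_{l_0}\neq 0$. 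This yields the bound $1$.

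Next, for $\cL\neq\cR$, the curve $(Z,X)$ is no longer permissible and a quadratic transformation is forced; I would split on the chosen direction $(\alpha:\beta:\gamma)\in\cC$. If $\gamma\neq 0$ (normalizing $\alpha=1$), then $F^{(1)}=(Z+\gamma)^n+(\text{terms of lower degree in }Z)$ plainly carries a degree-$(n-1)$ term, so the multiplicity drops at once. For a direction $(1:\beta:0)$ I would track the initial form $\overline{a_k}$ of some coefficient with $\nu(a_k)=n-k$ (which exists because $\cC$ is not a plane): factoring $\overline{a_k}=X^r\prod_i(\beta_iX-Y)^{s_i}$ with $r\neq 0$ and pushing it through the transformation produces $\prod_i(Y+\beta-\beta_i)^{s_i}$, whose lowest-degree monomial, multiplied by $Z^k$, has total degree $<n$. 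Hence every admissible direction other than $(0:1:0)$ forces an immediate drop.

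The crux, and the step I expect to be the genuine obstacle, is the surviving direction $(0:1:0)$, where the multiplicity may persist across many blow-ups. Here $\cL=(\lambda,\mu)$ with $\lambda<1$, and the combinatorial rule recorded in Section~\ref{ssQT} moves only $\cL$, freezing its abscissa and sending $(\lambda,\mu)\mapsto(\lambda,\lambda+\mu-1)$; correspondingly the angle $\theta$ at $\cR$ satisfies $\tan(\theta^{(1)})=\tan(\theta)+1$. Because the multiplicity has not yet dropped, every point obeys $x+y\geq 1$, so $\lambda+\mu\geq 1$ and thus $\tan(\theta)\leq -1$. Iterating adds $1$ to $\tan(\theta)$ each time, so after exactly $[-\tan(\theta)]$ blow-ups one reaches $\tan(\theta^{(r)})\in(-1,0]$, which is equivalent to $\lambda+\mu^{(r)}<1$ and hence to the appearance of a monomial of degree $<n$. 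The delicate points requiring care are: (i) verifying that $\cL$, rather than some other vertex, is the point whose coordinate sum first falls below $1$, so that the count is exact and not merely an upper bound; and (ii) confirming that the bound is \emph{attained}, i.e.\ that along the $(0:1:0)$ branch the drop cannot occur any earlier, which is precisely the persistence of $\tan(\theta^{(r)})\leq -1$ until step $[-\tan(\theta)]$. Together these show $[-\tan(\theta)]$ is genuinely the maximum.
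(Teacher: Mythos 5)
Your proposal is essentially the paper's own proof, step for step: the same dichotomy (the case $\cL=\cR$ disposed of by one monoidal transformation, with the coefficient of $Z^{n-1}$ equal to $n\alpha_{l_0}\neq 0$ thanks to $\sum_l\alpha_l=0$ from the {\WT} condition), and for the quadratic branch the same three-way direction analysis ($\gamma\neq 0$ giving an immediate drop; $(1:\beta:0)$ via the factorization $\overline{a_k}=X^r\prod_i(\beta_iX-Y)^{s_i}$ with $r\neq 0$; and $(0:1:0)$ via the rule $(\lambda,\mu)\mapsto(\lambda,\lambda+\mu-1)$, the identity $\tan(\theta^{(1)})=\tan(\theta)+1$, and the resulting count $[-\tan(\theta)]$). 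The only caveat is that you assert, exactly as the paper does and equally without justification, that $r\neq 0$ in the $(1:\beta:0)$ case (the {\GWT} condition only forces all $\beta_i\neq 0$, not $r\geq 1$), so your attempt faithfully reproduces the published argument including its one genuinely delicate point rather than strengthening it.
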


In previous sections we studied the evolution of the Hironaka polygon under blow-ups, and proved that it tends to a quadrant in the case of plane tangent cone. Note that in the case of non-plane tangent cone we have just seen that the polygon $\Delta(F)$ tends to a quadrant too, but it mostly happens that the multiplicity drops before we get it.

\subsection{Bounding the Resolution Process (II): The Tangent Cone is a Plane}
\label{sec:II}

As we have seen, combinatorics may be blind to the effect of blow-ups, which makes it impossible to bound the number of transformations we need to perform before multiplicity drops. What we can nevertheless do is to {\em prepare} the equation $F$ and bound the resolution process \emph{ab initio} from the prepared equation. 

After Proposition \ref{RemS} and Proposition \ref{PF} we have two ways to prepare the equation. In both cases we are able to bound the number of blow-ups needed before the multiplicity drops. But it seems to be non-trivial to know which condition is stronger (though we have used Proposition \ref{RemS} to prove Proposition \ref{PF}).

\begin{proposition}
Let $F$ be a {\WT} equation and assume $\Delta (F)$ is a quadrant, $\Delta (F) = \cL + \Q_{\geq0}^2$. Then the maximum number of blow-ups that can be performed before the multiplicity drops is $[L_1]+[L_2]+n$. 
\label{cuadrante}
\end{proposition}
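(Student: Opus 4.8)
The plan is to separate the process into a monoidal phase followed by a quadratic phase, using that a quadrant stays a quadrant under every blow-up we use (Proposition~\ref{PSQ} and the corollary following it) and that the combinatorial monoidal transformations are mere translations of $\Delta(F)$ (Section~\ref{subsec:monoidal}). Throughout I will use that for a quadrant $\Delta(F)=\cL+\Q_{\geq0}^2$ the number $L_1+L_2$ is the minimal value of $x+y$ on $\Delta(F)$, so that the multiplicity drops exactly at the first step for which $L_1+L_2<1$; note also that the \WT{} condition forces $L_1+L_2\geq 1$ to begin with.

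\emph{Monoidal phase.} Since $(Z,X)$ (resp.\ $(Z,Y)$) is permissible precisely when $\Delta(F)\subset\{x\geq 1\}$, i.e.\ when $L_1\geq 1$ (resp.\ $L_2\geq 1$), and since the corresponding monoidal transformation translates the vertex by $(-1,0)$ (resp.\ $(0,-1)$) while preserving the quadrant shape, I would perform monoidal transformations as long as the algorithm allows. These translations commute and lower $L_1$ (resp.\ $L_2$) by one each time, so after exactly $[L_1]$ transformations centered at $(Z,X)$ and $[L_2]$ centered at $(Z,Y)$ the vertex is brought into the unit square $[0,1)\times[0,1)$, at which point neither center is permissible. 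This contributes $[L_1]+[L_2]$ blow-ups; if the multiplicity happens to drop earlier we have performed even fewer, and the bound holds a fortiori.

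\emph{Quadratic phase.} Now $\cL=(L_1,L_2)$ with $0\le L_1,L_2<1$, and we may assume $L_1+L_2\geq 1$, for otherwise the multiplicity has already dropped. Let $k^\ast$ be a level realizing the vertex, so that $\Gamma[k^\ast](F)$ is a quadrant (the corollary to Proposition~\ref{PSQ}) with integral vertex $(i^\ast,j^\ast)=\bigl((n-k^\ast)L_1,(n-k^\ast)L_2\bigr)$, and set
\[
\Phi=i^\ast+j^\ast-(n-k^\ast)=(n-k^\ast)(L_1+L_2-1)\in\Z_{\geq0}.
\]
The key point is that every quadratic transformation in a tangent-cone direction strictly decreases $\Phi$ by at least one: using the formulas of Section~\ref{ssQT}, the combinatorial directions send $(i^\ast,j^\ast)$ to $(i^\ast+j^\ast-(n-k^\ast),j^\ast)$ or $(i^\ast,i^\ast+j^\ast-(n-k^\ast))$, lowering $\Phi$ by $(n-k^\ast)-j^\ast\geq1$ or $(n-k^\ast)-i^\ast\geq1$, while a direction $(1:\alpha:0)$ sends it to $(i^\ast+j^\ast-(n-k^\ast),0)$, lowering $\Phi$ by $n-k^\ast\geq 2$. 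Since initially $0\leq\Phi<n-k^\ast\leq n$, after at most $n$ quadratic transformations we reach $\Phi<0$; this means $\Gamma[k^\ast]$ acquires a point with $x+y<n-k^\ast$, hence $\Delta$ acquires a point with $x+y<1$ and the multiplicity drops. Thus the quadratic phase contributes at most $n$ blow-ups, and adding the two phases gives the bound $[L_1]+[L_2]+n$.

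The main obstacle is the quadratic phase, and specifically isolating the right monotone quantity. The subtlety is that the multiplicity of the \emph{surface} drops as soon as \emph{some} level falls below the plane $x+y+z=n$, while a single direction must be chosen simultaneously for all levels; the device of fixing the level $k^\ast$ that realizes $\cL$ and checking, case by case over the three types of tangent-cone directions, that its integral vertex forces $\Phi$ down by at least a unit is what makes the count uniform and independent of the chosen directions. One should also verify that $\Gamma[k^\ast]$ remains a quadrant after each quadratic transformation, so that its vertex stays well defined, which is exactly the content of the corollary to Proposition~\ref{PSQ}.
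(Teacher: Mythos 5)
Your two-phase decomposition and your $\Phi$-invariant are essentially the paper's own proof in integer clothing: the paper tracks the coordinate sum of the vertex of $\Delta$ and shows each combinatorial quadratic transformation lowers it by $1-\bigl(L_2-[L_2]\bigr)\geq 1/n$, which is exactly your statement that $\Phi=(n-k^\ast)(L_1+L_2-1)$ drops by at least one per blow-up. One small repair: the fact that $\Gamma[k^\ast](F)$ is a quadrant is true, but it is not the corollary to Proposition~\ref{PSQ} (that is a stability statement under blow-up); it follows directly from the quadrant hypothesis, since every cloud point $(i,j,k^\ast)$ has $(i,j)/(n-k^\ast)\in\cL+\Q_{\geq0}^2$ and therefore dominates $(i^\ast,j^\ast)$ componentwise. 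The corollary is, as you correctly say at the end, what keeps that level a quadrant as the process goes on.

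The genuine gap is that you treat $(1:0:0)$, $(0:1:0)$ and $(1:\alpha:0)$ as \emph{all} the tangent-cone directions, i.e.\ you tacitly assume $\overline F=Z^n$. The proposition assumes only that $F$ is {\WT} and $\Delta(F)$ is a quadrant, so the tangent cone need not be the plane $Z=0$; worse, even if $\overline F=Z^n$ initially, a non-planar cone can appear during your quadratic phase without the multiplicity dropping. For instance $F=Z^3+X^2Y^2$ satisfies all your standing hypotheses (a quadrant with $\cL=(2/3,2/3)$, already in the unit square), yet $\varpi^\fm_{(1:0:0)}$ turns it into $Z^3+XY^2$, still of multiplicity $3$ but with non-planar tangent cone; the next blow-up may then be taken in a direction such as $(1:1:-1)$, which lies in the tangent cone and has $\gamma\neq 0$. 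Such a transformation does not preserve $Z$-levels, so neither the formulas of Section~\ref{ssQT} nor your $\Gamma[k^\ast]$ bookkeeping apply to it, and your count is silent about it (the same blind spot affects the monoidal phase, where for a non-planar cone the directions $(1:0:c)$ with $c\neq 0$ are not translations either). This is precisely case~(c) of the paper's proof, which disposes of these directions by the argument of Section~\ref{sec:I}: because $a_{n-1}=0$ (the Tchirnhausen condition, in characteristic zero) the transform in any direction with $\gamma\neq 0$ acquires the term $n\gamma Z^{n-1}$, so the multiplicity drops at once and these directions contribute nothing to the bound. Adding this case closes your proof.
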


\begin{proof}
By hypothesis $\mathcal L=\mathcal R=(L_1,L_2)$. We know
$$
(Z,X) \mbox{ is permissible }  \Longleftrightarrow  L_1 \geq 1,
$$
and, should this be the case, we have $L_1^{(1)} = L_1 -1$, as the monoidal transformation amounts to a translation. Clearly we can repeat this process no more (and no less) than $[L_1]$ times.

The same goes with $(Z,Y)$ and $L_2$, and therefore, after $t=[L_1] + [L_2]$ monoidal transformations we have an equation $F^{(t)}$ such that
$$
\Delta \bigl( F^{(t)} \bigr) = \bigl( L_1-[L_1], L_2-[L_2] \bigr) + \Q_{\geq0}^2.
$$

If $F^{(t)}$ still has multiplicity $n$, that implies:
\begin{itemize}
\item Neither $(Z,X)$ nor $(Z,Y)$ are permissible curves.
\item $1 \leq L_1+L_2-t < 2$.
\end{itemize}
We therefore perform a quadratic transformation which gives us a new equation $F^{(t+1)}$ such that
$$
\Delta \bigl( F^{(t+1)} \bigr) = \left( a,b \right) + \Q_{\geq0}^2.
$$

Now consider the following cases:
\begin{enumerate}
\item[(a)] If the direction considered is $(1:\alpha:0)$, with $\alpha \neq 0$, then
$$
(a,b) = ( L_1+L_2-t-1, 0 ),
$$
and therefore the multiplicity has dropped. 
\item[(b)] If the direction considered is $(1:0:0)$, then
$$
(a,b) = \bigl( L_1+L_2-t-1, L_2-[L_2] \bigr),
$$
and therefore the quadrant moves leftwards $1-\bigl(L_2-[L_2]\bigr) \geq 1/n$. Hence the bound follows. 

The case $(0:1:0)$ is analogous.
\item[(c)] If the considered direction is $(\alpha:\beta:\gamma)$ with $\gamma\neq 0$, the tangent cone cannot be a plane. Then, as we saw in the previous section ({\WT} is enough for this), the multiplicity drops unless
the 
direction considered is $(0:1:0)$, which is already taken care of in case~(b).
\end{enumerate}
This finishes the proof.
\end{proof}

\begin{corollary}
Let $F$ be a {\GWT} equation and assume $\Delta (F)$ is a {\GWT} quadrant, $\Delta (F) = (L_1,0) + \Q_{\geq0}^2$. Then $(Z,X)$ is permissible and, after $[L_1]$ monoidal transformations centered in $(Z,X)$ the multiplicity drops.
\label{Cor}
\end{corollary}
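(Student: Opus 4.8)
The plan is to read this off the proof of Proposition~\ref{cuadrante}, specialized to the case $L_2=0$, where no $(Z,Y)$ monoidal transformations are ever needed. First I would check that $(Z,X)$ is permissible. Since $F$ is {\WT}, every monomial $a_{ijk}X^iY^jZ^k$ with $a_{ijk}\neq 0$ obeys $i+j\geq n-k$, so $\Delta(F)\subset\{x+y\geq 1\}$; applying this containment to the vertex $(L_1,0)$ gives $L_1\geq 1$. By the criterion in Section~\ref{subsec:monoidal}, the condition $\Delta(F)\subset\{x\geq 1\}$ is exactly the permissibility of $(Z,X)$, so this holds.

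Next I would iterate the monoidal transformation centered at $(Z,X)$. Each combinatorial monoidal transformation translates the polygon by the vector $(-1,0)$ (Section~\ref{subsec:monoidal}), so after $m$ steps one has $\Delta(F^{(m)})=(L_1-m,0)+\Q_{\geq0}^2$, again a {\GWT} quadrant whose unique vertex sits on the $OX$-axis. Permissibility of $(Z,X)$ at stage $m$ amounts to $L_1-m\geq 1$, so the iteration is legitimate for every $m\leq L_1-1$.

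Finally I would pin down the step at which the multiplicity drops. Because the $y$-coordinate remains $0$ throughout, the quantity $x+y$ attains its minimum $L_1-m$ at the vertex, and by the multiplicity-drop criterion recalled in Section~\ref{S2} (just before Figure~\ref{fig:4}) the multiplicity drops exactly when $L_1-m<1$. The least integer $m$ with $L_1-m<1$ is $m=[L_1]$, irrespective of whether $L_1\in\Z$. Two boundary checks then remain: for $m<[L_1]$ one has $L_1-m\geq 1$, so the vertex stays in $\{x+y\geq 1\}$ and the multiplicity has not yet dropped; and just before the final blow-up the vertex has first coordinate $L_1-([L_1]-1)\geq 1$, so $(Z,X)$ is still permissible and the $[L_1]$-th transformation is allowed.

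The computation is routine and there is no genuine obstacle; the only point I would handle with care is the floor-function bookkeeping at $x=1$, confirming that both the generic case $L_1\notin\Z$ and the boundary case $L_1\in\Z$ (where the vertex lands precisely on $(0,0)$) produce exactly $[L_1]$ transformations, so that the stated count is sharp rather than merely an upper bound.
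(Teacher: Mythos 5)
Your proof is correct and takes essentially the same route the paper intends: the paper's own proof is just the remark ``clear from the properties of {\GWT} equations and the action of monoidal transformations on $\Delta(F)$,'' and your argument---permissibility from $L_1\geq 1$ (via the {\WT} bound $x+y\geq 1$ applied to the vertex), the translation by $(-1,0)$ at each step, and the multiplicity-drop criterion $x+y<1$---is precisely the expansion of that remark. The floor-function bookkeeping, including the boundary case $L_1\in\Z$, is handled correctly, so nothing is missing.
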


\begin{proof}
Clear from the properties of {\GWT} equations and the action of monoidal transformations on $\Delta(F)$.
\end{proof}

The condition of $\Delta(F)$ being a quadrant is quite strong. However we present here an example of a well-known family of surface singularities with the property of $\Delta(F)$ being a quadrant. 

    Let us consider the ring $S=K((X^{1/n}, Y^{1/n}))$ and an element $\zeta\in S$ of the form
\[
    \zeta=X^{a/n}Y^{b/n}u(X^{1/n},Y^{1/n}), \qquad u(0,0)\neq 0.
\]
By definition, a $\nu$-quasi ordinary series~$F$ is the minimum polynomial of~$\zeta$ over~$R=K((X,Y))$. We may identify the rings $S$ and~$K((X,Y))(X^{1/n}, Y^{1/n})$, and it is well known that $\operatorname{Gal} ( S/R)\simeq C_{n}\times C_{n}$. If  $\delta$ is a primitive $n$-th  root of unity, the automorphism corresponding to the pair $(i,j)$ is simply
\[
    \sigma_{(i,j)} \colon
    \left\{
        \begin{aligned}
            X^{1/n} &\mapsto \delta^{i} X^{1/n},\\
            Y^{1/n} &\mapsto \delta^{j} Y^{1/n}.
        \end{aligned}
    \right.
\]
We denote
\[
    \zeta_{(i,j)}=\sigma_{(i,j)}(\zeta)=\delta^{ai+bj}X^{a/n}Y^{a/n}u_{(i,j)}(X^{1/n}, Y^{1/n}).
\]
We know that the polynomial
\[
    F=\prod_{1\leq i<j\leq n} (Z-\zeta_{ij})=
    Z^{n}+\sum_{l=0}^{n-1} a_{n-l}(X,Y) Z^{l}
    \in K[[X,Y]][Z]
\]
is either the minimal polynomial of~$\zeta$ or a power of the minimal polynomial.
By requiring $F$ to be irreducible, we can suppose that $F$~is in fact the minimal polynomial of~$\zeta$. By general Galois Theory, we have actually~$F\in K[[X,Y]][Z]$.

Here, we have the Cardano relations
\[
    a_{n-l}(X,Y)=(-1)^{n-l}\sum \zeta_{(i_{1},j_{1})} \dots \zeta_{(i_{l}, j_{l})},
\]
where the $\zeta_{(i_{k}, j_{k})}$ are all different.

If we make the additional assumption that $u(0,0)=1$ we can write
\[
    a_{n-l}(X,Y)=(-1)^{n-l}\sum \delta^{a(i_{1}+\dots+i_{l})+b(j_{1}+\dots+j_{l})} X^{al/n} Y^{bl/n} v(X,Y),
\]
where $v(X^{1/n},Y^{1/n})$ is a unit with $v(0,0)=1$.

The independent term~$a_{n}$, being the product of all conjugates, will be of the form
$a_{n}=X^{a}Y^{b}w$, with $w$~a unit.
We know that in this situation, $F$ is a {\WT}-polynomial if and only in $\zeta$ has no integer exponents.

\begin{proposition}
    Let $F$ be a $\nu$-quasi ordinary series which is also a {\WT} polynomial. Then,
    \[
        \Delta(F)=(a,b)+\mathbb{R}_{\geq 0}^{2},
    \]
    in other words, the Newton-Hironaka polygon of~$F$ is a quadrant.
\end{proposition}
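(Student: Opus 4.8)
The plan is to read the quadrant shape of $\Delta(F)$ directly off the $X$- and $Y$-orders of the coefficients, using the explicit description of the conjugates of $\zeta$, and never to struggle with the (possibly heavy) cancellations inside the symmetric functions. The key initial observation is that, since $u$ is a unit with $u(0,0)\neq0$, \emph{every} monomial occurring in a conjugate
$$
\zeta_{(i,j)}=\delta^{ai+bj}\,X^{a/n}Y^{b/n}\,u_{(i,j)}(X^{1/n},Y^{1/n})
$$
has $X$-exponent $\geq a/n$ and $Y$-exponent $\geq b/n$, the lowest-order monomial being exactly $\delta^{ai+bj}X^{a/n}Y^{b/n}$. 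This single structural fact will drive everything.

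First I would turn this into an order estimate for each coefficient. The coefficient of $Z^{m}$ equals, up to sign, the elementary symmetric function $e_{n-m}$ of the conjugates, hence it is a sum of products of $n-m$ distinct $\zeta_{(i,j)}$. Each such product has all of its monomials with $X$-exponent $\geq (n-m)a/n$ and $Y$-exponent $\geq (n-m)b/n$, and summing cannot lower these bounds. Therefore every monomial $X^{i}Y^{j}$ appearing in the coefficient of $Z^{m}$ satisfies $i\geq(n-m)a/n$ and $j\geq(n-m)b/n$, so its image obeys
$$
\rho(i,j,m)=\left(\frac{i}{n-m},\frac{j}{n-m}\right)\in\left(\frac{a}{n},\frac{b}{n}\right)+\Q^{2}_{\geq0}.
$$
Taking convex hulls, this gives the inclusion $\Delta(F)\subseteq (a/n,b/n)+\Q^{2}_{\geq0}$.

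For the reverse inclusion I only need to exhibit the vertex, and for this I would isolate the constant term $a_{n}=\pm\prod_{(i,j)}\zeta_{(i,j)}$. Being the product of \emph{all} conjugates, its lowest-order monomial is $\bigl(\prod\delta^{ai+bj}\bigr)X^{a}Y^{b}$ with a coefficient that is a nonzero root of unity; in particular it cannot cancel. Thus $X^{a}Y^{b}$ genuinely occurs in $a_{n}$, its image is $\rho(a,b,0)=(a/n,b/n)$, and hence $(a/n,b/n)+\Q^{2}_{\geq0}\subseteq\Delta(F)$. Combining the two inclusions yields that $\Delta(F)$ is the quadrant with unique vertex $\cL=\cR=\rho(a,b,0)=(a/n,b/n)$, which is the assertion.

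The only point that needs care — and the place where the hypotheses are genuinely used — is the very first step: that the presentation $\zeta=X^{a/n}Y^{b/n}u$ with $u$ a unit forces all monomials of every conjugate to be bounded below by $X^{a/n}$ and $Y^{b/n}$. This is precisely what guarantees that cancellation among products of conjugates can only raise orders, never lower them, so that no $\rho$-image can drop below the vertex. The {\WT} hypothesis (equivalently, $\zeta$ having no integer exponents, so that the Tchirnhausen-normalised leading monomial is still the characteristic monomial $X^{a/n}Y^{b/n}$) is exactly what secures this lower bound. Once it is in place, the identification of the vertex via the non-vanishing leading monomial of $a_{n}$ is routine, and no finer analysis of the intermediate symmetric functions $e_{n-m}$ is required.
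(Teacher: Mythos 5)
Your argument is correct and is essentially the paper's own proof: the containment $\Delta(F)\subseteq\rho(a,b,0)+\Q^2_{\geq 0}$ read off from the symmetric-function structure of the coefficients, plus the reverse containment from the non-cancelling lowest term $X^aY^b$ of the constant coefficient (and your vertex $(a/n,b/n)=\rho(a,b,0)$ is indeed the correct normalization of what the paper loosely writes as ``$(a,b)$''). The one point to correct is your closing attribution of the {\WT} hypothesis: the lower bound on the exponents of the conjugates comes solely from $u$ being a unit and holds with or without {\WT}, so your two inclusions never actually use it; the {\WT} condition is needed (as the paper indicates) only to guarantee that the polygon computed from this particular equation is the intrinsic characteristic polygon of the surface --- that is, that it has no contractible vertices --- rather than an accident of a bad choice of~$Z$.
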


\begin{proof}
    Note that the hypothesis of being a {\WT}-polynomial is needed in order to assert that the Hironaka polygon is exactly the polygon given by~$F$.
    The rest of the proof is trivial: Note first that $(a,b)\in\Delta(F)$. It is immediate to check that projection and scaling of all possible exponents of $a_{n-l}$ do belong to~$(a,b)+\mathbb{R}_{\geq 0}^{2}$. Hence the fact.
\end{proof}

\bigskip

By Proposition \ref{PF} in a finite number of steps we get to the situation where every coeffcient $a_k(X,Y)$ is a  quadrant.

\begin{definition}
	We will say that a {\WT} equation $F$ is {\em prepared} if every $\Gamma[k](F)$ is a quadrant.
\label{prep}
\end{definition}

We can bound the number of blow-ups before a decrease in multiplicity for {\em prepared} equations.

\begin{theorem}
If $F$ is a {\em prepared} equation then following Algorithm~\ref{alg}
the multiplicity drops in less than $n(R_1+L_2-1)+1$ transformations.
\label{cuenta}
\end{theorem}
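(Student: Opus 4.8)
The plan is to follow the combinatorial dynamics level by level, reducing the question to a monovariant argument with increments of size at least $1/n$, which is what produces the factor $n$ in the bound.

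First I would record that the hypothesis persists. Since $F$ is prepared, every $\Gamma[k](F)$ is a quadrant; by the corollary to Proposition~\ref{PSQ} for quadratic transformations, and because monoidal transformations act as translations, each $\Gamma[k]\bigl(F^{(r)}\bigr)$ stays a quadrant as long as the multiplicity has not dropped. Hence at every stage $\Delta(F^{(r)})$ is the convex hull of exactly one point per level, $p_k=(x_k,y_k)=\bigl(a_k/(n-k),\,b_k/(n-k)\bigr)$, and the multiplicity drops precisely at the first moment some level reaches scaled order $s_k:=x_k+y_k<1$. The leftmost and rightmost vertices $\cL=(L_1,L_2)$, $\cR=(R_1,R_2)$ of the compact boundary satisfy $L_1\le R_1$ and $R_2\le L_2$, whence the minimal scaled order $\mu:=\min_k s_k\le L_1+L_2\le R_1+L_2$ at the outset. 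Using the factorization Lemmas and Lemma~\ref{transvectionsquare}, any quadratic transformation in a direction $(1:\alpha:0)$ is, up to an initial transvection, the combinatorial one in direction $(1:0:0)$; since a transvection only creates, on each level, points whose coordinate sum is at least that of the original vertex, it leaves $\mu$ and the drop time unchanged (cf.\ Remark~\ref{remT}). I may therefore assume every quadratic is combinatorial, i.e.\ in direction $(1:0:0)$ or $(0:1:0)$.

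Next I would read off the elementary moves on a vertex: monoidal $(Z,X)$ (available iff $\min_k x_k\ge1$) sends $(x_k,y_k)\mapsto(x_k-1,y_k)$; monoidal $(Z,Y)$ (iff $\min_k y_k\ge1$) sends $(x_k,y_k)\mapsto(x_k,y_k-1)$; the combinatorial quadratic $(1:0:0)$ sends $(x_k,y_k)\mapsto(x_k+y_k-1,y_k)$ and $(0:1:0)$ sends $(x_k,y_k)\mapsto(x_k,x_k+y_k-1)$. The two monoidals lower every $s_k$ by exactly $1$; the quadratic $(1:0:0)$ lowers $s_k$ by $1-y_k$ on every ``low'' level $y_k<1$, and $(0:1:0)$ lowers $s_k$ by $1-x_k$ on every ``narrow'' level $x_k<1$. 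Because each coordinate is a multiple of $1/(n-k)\ge 1/n$, every one of these decreases is at least $1/n$. If some level has both $x_k<1$ and $y_k<1$ then it is decreased by $\ge1/n$ under \emph{any} admissible quadratic, so $\mu$ drops by at least $1/n$ and, together with the monoidal steps, the count $<n(R_1+L_2-1)+1$ follows at once. The complementary fact that makes the quadratic steps terminate even when $\mu$ does not visibly move is Corollary~\ref{distancia}: every combinatorial quadratic strictly decreases $d(\cL,\cR)$, while monoidal translations preserve it, so $d(\cL,\cR)$ is a genuine monovariant for the quadratic steps.

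The step I expect to be the main obstacle is the ``crossed'' configuration, in which no single level has both coordinates $<1$: the leftmost level is tall ($y>1$) and the lowest level is wide ($x>1$), while levels sitting on the lines $x=1$ or $y=1$ are fixed points of one of the two quadratics. There $\mu$ is not monotone — a tall level drifts rightward under $(1:0:0)$ and its scaled order grows — so a naive monovariant fails. The decisive structural input is the priority of monoidal transformations in Algorithm~\ref{alg}: any attempt to ``save'' a level by pumping one coordinate upward eventually raises that coordinate to $\ge1$ on \emph{every} level, which forces a monoidal transformation lowering all scaled orders by $1$. The technical heart of the proof is to convert this qualitative mechanism into the sharp count, by amortizing each pumping quadratic against the monoidal it forces and bounding the number of monoidals by the available height and width $L_2$ and $R_1$; equivalently, by quantifying the per-step decrement of $d(\cL,\cR)$ as a multiple of $1/n$. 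Carrying out this amortization, handling the boundary and fixed-point levels, and checking the explicit constant — so that the total stays below $n(R_1+L_2-1)+1$ and reduces to the estimate of Proposition~\ref{cuadrante} when $\Delta(F)$ is a single quadrant — is where the real work lies.
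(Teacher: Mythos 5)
Your proposal contains a correct setup (persistence of preparedness, the per-level vertex dynamics for combinatorial transformations), but it has two genuine gaps, and the second is fatal as written. First, the reduction ``I may therefore assume every quadratic is combinatorial'' is illegitimate. By Lemma~\ref{transvectionsquare} a quadratic transformation in direction $(1:\alpha:0)$ does factor as a transvection followed by $\varpi^{\fm}_{(1:0:0)}$, but the transvection destroys the very hypothesis your argument rests on: it does not preserve preparedness. For instance the level $a_k=X^aY^b$, a quadrant, becomes $X^a(Y+\alpha X)^b$, whose $\Gamma[k]$ acquires a compact face from $(a,b)$ to $(a+b,0)$. After your reduction the levels are no longer single points $(x_k,y_k)$, so the ``elementary moves on a vertex'' of your second paragraph no longer describe the dynamics, and the rest of the argument has nothing to stand on. The paper treats $(1:\alpha:0)$, $\alpha\neq 0$, as the \emph{best} case rather than reducing it to $(1:0:0)$: because $F$ is prepared, the point $\cM=(M_1,M_2)$ minimizing $P_1+P_2$ on $\Delta(F)$ is the vertex of some quadrant level $\Gamma[k]$, its transform suffers no cancellation, and $\Delta(F^{(1)})$ is then a quadrant with vertex $(M_1+M_2-1,0)$.

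Second, the counting argument is exactly the part you leave undone (``where the real work lies''), and the ``crossed configuration'' you flag is resolved in the paper not by amortization but by choosing the right monovariant --- neither your $\mu=\min_k(x_k+y_k)$ nor $d(\cL,\cR)$, but the mixed quantity $R_1+L_2$. When no monoidal center is available: under $(1:0:0)$ one has $R_1^{(1)}=R_1+R_2-1\leq R_1-1/n$ (since $(Z,Y)$ is not permissible, $R_2<1$, and $R_2$ is a fraction with denominator at most $n$) while $L_2^{(1)}\leq L_2$; under $(0:1:0)$ symmetrically $L_2^{(1)}=L_1+L_2-1\leq L_2-1/n$ and $R_1^{(1)}\leq R_1$; under $(1:\alpha:0)$, by the no-cancellation argument above, $R_1^{(1)}+L_2^{(1)}=M_1+M_2-1\leq R_1+L_2-1$. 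A monoidal transformation translates the polygon and drops $R_1+L_2$ by exactly $1$. So \emph{every} step of Algorithm~\ref{alg} decreases $R_1+L_2$ by at least $1/n$; while the tangent cone is a plane one has $R_1+L_2\geq R_1+R_2>1$; hence within $n(R_1+L_2-1)$ steps the tangent cone ceases to be a plane, and a single further transformation (using preparedness again) drops the multiplicity, giving the stated bound. This disposes of your crossed configuration automatically: a tall leftmost level drifting rightwards under $(1:0:0)$ raises $\mu$ but is irrelevant to $R_1$, which strictly drops. Note also that Corollary~\ref{distancia} cannot substitute for this monovariant: it only covers the two axis directions, its decrement is not quantified, and $d(\cL,\cR)$ is unchanged by monoidal transformations, so it cannot count steps.
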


\begin{proof}
First note that the condition of $F$ being prepared is preseved during our resolution process.

Suppose that neither $(Z,X)$ nor $(Z,Y)$ are permissible for $F$. We study the effect of the quadratic transformation in any direction of the tangent cone.

 \begin{itemize}
\item In the direction  $(1:0:0)$ we have
\[
\begin{aligned}
    L_2^{(1)} &\leq L_2\\
    R_1^{(1)} &= R_1 + R_2 - 1 < R_1,
\end{aligned}
\]
where the last inequality follows because $(Z,Y)$ is not permissible, and hence $R_2<1$.

\item In the direction  $(0:1:0)$ we have, analogously,
\[
\begin{aligned}
    L_2^{(1)} &= L_1 + L_2 - 1 < L_2\\
    R_1^{(1)} &\leq R_1.
\end{aligned}
\]

\item In the direction  $(1:\alpha:0)$ with $\alpha\neq 0$, we consider the point $\mathcal M=(M_1,M_2)$ minimizing $P_1+P_2$ among the points $(P_1,P_2)$ in $\Delta(F)$ (in case there is more than one such point, we take the one with maximal height). Since $F$ is prepared, $\mathcal M$ corresponds to the vertex of a certain $\Gamma[k](F)$. In its transform there can be no cancellations and therefore, after the blow-up, in any direction $(1:\alpha:0)$ we will get that $\Delta(F^{(1)})$ is a quadrant of vertex $(M_1+M_2-1,0)$. Therefore
$$
L_2^{(1)} + R_1^{(1)} = M_1 + M_2-1 < R_1 + L_2.
$$
\end{itemize}
Hence, blowing up the origin in any direction of the tangent cone, we have proved that $L_2+R_1$ drops. And at least we have
$$
L_2^{(1)}+R_1^{(1)}\leq L_2+R_1-\frac{1}{n}
$$
If we perform a monoidal transformation the situation is even better, as we know that in this case
$$
L_2^{(1)} + R_1^{(1)} = L_2 + R_1 - 1.
$$

So, clearly after at most $n(R_1+L_2-1)$ steps we will be in a situation where the tangent cone is not a plane. Again, as all $\Gamma[k](F)$ will be quadrants, it is easy to see that a single (quadratic or monoidal) transformation will decrease the multiplicity. This finishes the proof.
\end{proof}

Notice that if $\Delta(F)$ is itself a quadrant and $F$ is prepared we can apply both Theorem \ref{cuenta} and Proposition \ref{cuadrante}. It is not possible in general to know which bound is better (see examples in Section \ref{ejemplos}).

\section{Examples}
\label{ejemplos}

The bounds from Theorem \ref{cota1} and Theorem \ref{cuenta} can be optimal, as the following examples show.

\begin{example}
Consider the surface defined by the {\GWT} equation $$F=Z^2+X^2+Y^{2r}.$$
The curves $(Z,X)$ and $(Z,Y)$ are not permissible, hence we blow-up the origin. In any direction different from $(0:1:0)$ the surfaced is resolved. While, after $r-1$ quadratic transformations in the direction $(0:1:0)$ we get
$$F^{(r-1)}=Z^2+X^2+Y^2$$
and after one more blow-up on any direction the surface is resolved, as there are no points of order $2$ at the tangent cone $\mathcal C$.

On the other hand, by Theorem \ref{cota1} we get the bound $r$, since $\mathcal L=(1,0)$ and $\mathcal R=(0,r)$.
\end{example}

\begin{example}
For a case where the tangent cone is a plane, consider the surface defined by the {\GWT} equation $$F=Z^n+X^{n-1}Y^{n-1},$$
which is already a prepared equation, since $\Delta(F)$ is a quadrant.

The curves $(Z,X)$ and $(Z,Y)$ are again not permissible, hence we blow-up the origin. While, as predicted in Proposition \ref{almostall}, most choices of directions resolve the singularity, if we perform a combinatorial quadratic transform, say in the direction $(0:1:0)$, we get
$$
F^{(1)} = Z^n + X^{n-1}Y^{n-2}
$$

It is plain to see that we can do this $n-2$ times, no permissible curves appear in the process, until we get
$$
F^{(n-2)} = Z^n + X^{n-1}Y,
$$
a surface with a non--planar tangent cone which is resolved by a single quadratic transform. Therefore the maximal number of blow--ups we can do without decreasing the multiplicity is precisely $n-1$, and then the bound from Theorem \ref{cuenta} is
$$
n \left( \frac{n-1}{n} + \frac{n-1}{n} - 1 \right) + 1 = n-1.
$$
\end{example}

Interestingly enough, the example above does {\em not} show that the bound from Proposition \ref{cuadrante} is optimal. We present next another example of $\Delta(F)$ quadrant where the best bound, though not optimal, is given by Proposition \ref{cuadrante}.

\begin{example}
Consider the surface defined by the {\GWT} equation
\[F=Z^n+X^{2n-1}Y^{2n-1}\]
If we blow up the permissible curves $(Z,X)$ and $(Z,Y)$ we get the equation of the previous example. Hence the surface is resolved in $n+1$ transformations. Now, check that by Proposition \ref{cuadrante} the bound is 
\[\left[\frac{2n-1}{n}\right]+\left[\frac{2n-1}{n}\right]+n=n+2\]
while by Theorem \ref{cuenta} we get the bound
\[n\left(\frac{2n-1}{n}+\frac{2n-1}{n}-1\right)+1=3n-1\]
which is worse because $n>1$.
\end{example}

As we see the bounds are far from being optimal in general; as one can guess from
our arguments, where extremely coarse bounds have to be taken in order to cover
pathological situations which may, in fact, never happen.

\begin{example}
Consider the surface defined by the {\WT} equation
\[F=Z^5+X^2YZ^3+X^3Y^3.\]
The tangent cone is the plane $Z=0$ and neither $(Z,X)$ nor $(Z,Y)$ are permissible. If we blow up the origin the muliplicity drops in the directions $(1:0:0)$ and $(0:1:0)$, while the transform is already smooth in the directions $(1:\alpha:0)$. Obviously our  equation is prepared with $\mathcal \mathcal L=(\frac{3}{5},\frac{3}{5})$ and $\mathcal R=(1,\frac{1}{2})$, and the bound given in Theorem \ref{cuenta} is $4$.
\end{example}

\section*{Acknowledgements}

The first author was supported by Project {\em Métodos Computacionales en Ál\-ge\-bra, D-módulos, Teoría de la Representación y Optimización (MTM2016-75024-P)} (Ministerio de Econom\'{\i}a y Competitividad). The second and third authors were supported by Project {\em Geometría  Aritmética, D-módulos y Singularidades  (MTM\-2016–75027–P)} (Ministerio de Econom\'{\i}a y Competitividad) and Project {\em Singularidades, Geometría Algebraica Aritmética y Teoría de Representaciones: Estructuras y Métodos Diferenciales, Cohomológicos, Combinatorios y Computacionales (P12–FQM–2696)} (Jun\-ta de Andaluc\'{\i}a and FEDER).

\end{document}